\numberwithin{equation}{section}
\theoremstyle{plain}
  \newtheorem{theorem}{Theorem}[section]
  \newtheorem{corollary}[theorem]{Corollary}
  \newtheorem{lemma}[theorem]{Lemma}
  \newtheorem{proposition}[theorem]{Proposition}
\theoremstyle{definition}
  \newtheorem{definition}[theorem]{Definition}
\theoremstyle{remark}
  \newtheorem{remark}[theorem]{Remark}
\let\noi=\noindent
\title{Key polynomials in dimension 2}
\author{
W. Mahboub\\
Lebanese University, Lebanon. \and
M. Spivakovsky\\
Institut de Math\'ematiques de Toulouse\\
UMR 5219 du CNRS,\\
Universit\'e Paul Sabatier\\
118, route de Narbonne\\
31062 Toulouse cedex 9, France.\\email:
mark.spivakovsky@math.univ-toulouse.fr }
\begin{document}
\newcommand{\m}{\mathfrak{m}}
\newcommand{\R}{\mathbb{R}}
\newcommand{\C}{\mathbb{C}}
\newcommand{\Q}{\mathbb{Q}}
\newcommand{\Z}{\mathbb{Z}}
\newcommand{\N}{\mathbb{N}}
\newcommand{\V}{\mathcal{V}}
\newcommand{\T}{\mathcal{T}}
\newcommand{\g}{\Gamma}
\newcommand{\h}{\Phi}
\newcommand{\ch}{{\operatorname{char}}}
\newcommand{\ini}{{\operatorname{in}}}
\newcommand{\ord}{{\operatorname{ord}}}
\newcommand{\gr}{{\operatorname{gr}}}
\newcommand{\rk}{{\operatorname{rk}}}
\newcommand{\md}{{\operatorname{mod}}}

\renewcommand{\do}{,\ldots ,}

\maketitle
\newpage
\section{Introduction}

Throughout this paper all the rings considered will be commutative with 1.\\
Let $(R,\mathfrak m,k)$ be a regular local ring of dimension $2$ and $F$ the field of fractions of $R$. Consider the poset $(\V,\leq)$ of normalized valuations of $F$ centered at $R$ (see \S\ref{basics}).\\

In \cite{FJ} C. Favre and M. Jonsson prove that $(\V,\leq)$ has the structure of a parametrized, rooted, non-metric tree when
$R=\C[[x,y]]$, the ring of formal power series over the field of complex numbers. The proof of  C. Favre and M. Jonsson is based on associating to each valuation in $\V$ a set of key polynomials, a concept introduced by S. MacLane in \cite{M1} and \cite{M2}. Below we will refer to this set as a {\bf complete set of key polynomials} (see \S\ref{com_key_pol} for its definition and porperties).\\

In \cite{G} A. Granja generalizes this result to the case when $R$ is any two-dimensional regular local ring. A. Granja gives a proof based on associating to each valuation in $\V$ a sequence of point blowing ups.\\
 
In this paper we give a new proof of A. Granja's result when $R$ is any two-dimensional regular local ring, using appropriate complete sequences of key polynomials, based on the work of M. Vaqui\'e \cite{V1} for valuations of arbitrary rank, and the work of F. J. Herrera Govantes, W. Mahboub, M. A. Olalla Acosta and M. Spivakovsky (\cite{HOS}, \cite{HMOS}) for valuations of rank $1$ over fields of arbitrary characteristic.\\

We use the notion of key polynomials introduced in \cite{DSM} and \cite{NS}. We give a simple construction of a complete set of key polynomials associated to a valuation of the field $k(x,y)$ where $k$ is the residue field of $R$ and $x,y$ are independent variables. For explicit constructions of key polynomials on particular cases, see \cite{S}, \cite{FJ}, \cite{K}.\\

We start by stating in \S\ref{basics} the basic facts related to valuations needed in this paper. Then we establish, in \S\ref{blowup}, a natural order-preserving bijection between valuations of $F$ centered at $R$ and valuations of $k(x,y)$ centered at $k[x,y]_{(x,y)}$. This is the content of Theorem \ref{bijection}. It consists of describing a one-to-one correspondence between the set of valuations centered at $R$ and the set of simple sequences of local point blowing ups (see Corollary \ref{val_blow_onetoone}).\\

In \S\ref{com_key_pol} we give the definition of key polynomials. We state the needed facts about key polynomials and construct a complete set of key polynomials associated to a valuation $\nu$ of $k(x,y)$. This is our main tool for the proof of Theorem \ref{tree_structure}. We also define invariants of valuations centered in regular two-dimensional local rings.\\

Then we consider two comparable valuations, $\mu\leq \nu$, and study the structure of their key polynomials sets and the relation between the invariants of those valuations. This is done in \S\ref{order_val}. Using this comparison,  we prove that the infimum of any two elements of $\V$ exists (Theorem \ref{infimum}) and that any increasing sequence in $\V$ has a majorant in $\V$ (Theorem \ref{majorant}). We note that a more general version of the latter result --- one for rings of arbitrary dimension --- is given in Lemma 3.9 (i) of \cite{N}.\\

Finally, in \S\ref{trees} we prove the main theorem of this paper, Theorem \ref{tree_structure}. This Theorem asserts that $\V$ has a tree structure.

We thank the referee for a very careful reading of the paper and for numerous useful comments that helped improve the exposition.

\section{Basics}\label{basics}

Let $R$ be a regular noetherian local ring of dimension 2. Denote by $\m$ its maximal ideal and let $F$ be the quotient field of $R$.

A \textbf{valuation} of $F$ is a function $\nu:F\longrightarrow \bar{\R}=\R\cup \{\infty\}$ such that for all $f,g\in F$: 
\begin{description}
\item[$(V_1)$] $\nu(f+g)\geq\inf(\nu(f),\nu(g))$,
\item[$(V_2)$] $\nu(f\cdot g)=\nu(f)+\nu(g)$.
\end{description}
It is an easy exercise to check that if $\nu$ is not constant, then axiom $(V_2)$ implies
\smallskip

\noindent$(V_3)$ $\nu(1)=0$.

Let $\Gamma$ be a totally ordered abelian group. A \textbf{Krull valuation} of $F$ is a function
$$
\nu:F\longrightarrow \Gamma\cup\{\infty\}
$$
satisfying $(V_1)$, $(V_2)$ and $(V_3)$ such that $\nu^{-1}(\infty)=0$.\\

If $\nu$ is a valuation or a Krull valuation of $F$, we say that $\nu$ is \textbf{centered} at $R$ if $\nu$ is non-negative on $R$ and strictly positive on $\m$. We say that $\nu$ is \textbf{proper} if $\nu(F\setminus\{0\})\neq\{0\}$ and $\nu(\m)\neq\{\infty\}$.\\

If $\nu$ and $\nu'$ are two valuations of $F$, then we say that $\nu$ and $\nu'$ are \textbf{equivalent}, and write $\nu\sim\nu'$, if there exists a non-zero real number $c$ such that for all $f\in F$ we have $\nu(f)=c\nu'(f)$.\\

Let $\V=\{\nu\mid\nu\ proper\ valuation\ centered\ at\ R\}/\sim$. When working with an element of $\V$, we will tacitly fix a valuation representing it, so in practice we will work with valuations instead of classes of valuations. We will consider only normalized valuations, in the sense that $\nu(\m):=inf\{\nu(f)\ |\ f\in\m\}=1$. Indeed, we can represent any element $\nu$ of $\V$ by a uniquely determined normalized valuation after multiplying all the values by $\frac1{\nu(\m)}$.\\
 
For an element $\nu\in \V$ we will denote by $\Gamma_{\nu}$ the \textit{augmented} value group of $\nu$, that is,
$\Gamma_{\nu}:=\nu(F)\subset\bar{\R}$.\\ 
 
If $\nu$ is a valuation (resp. a  Krull valuation), the set 
$$
R_\nu:=\left\{f\in F\ |\ \nu(f)\geq 0\right\};
$$
is a local ring called the \textbf{valuation ring associated to} $\nu$, with maximal ideal
$$
\m_\nu:=\left\{f\in F\ |\ \nu(f)> 0\right\}.
$$
The \textbf{rank} of $\nu$, denoted by $\rk(\nu)$, is the Krull dimension of $R_{\nu}$. In our situation $\rk(\nu)$ is at most $2$ by Abhyankar's inequality.\\
\begin{remark} We have $\rk(\nu)=1$ if and only if $\nu(F\setminus\{0\})\subset\R$ (resp. the group $\nu(F\setminus\{0\})$ can be embedded into the additive group $\R$ of real numbers).
\end{remark}
\medskip

If $S$ is a ring contatined in $R_{\nu}$, the \textbf{center} of $\nu$ in $S$ is the prime ideal $\m:=\m_{\nu}\cap S$. In this situation we also say that $\nu$ \textbf{is centered at }$\m$. If $(S,\m)$ is a local domain, we will sometimes say that $\nu$ \textbf{is centered at }$(S,\m)$.\\

\begin{definition} For two local rings $(R_1,\m_1)$ and $(R_2,\m_2)$, we say that $R_2$ \textbf{dominates} $R_1$ if $R_1\subset R_2$ and $\m_1=R_1\cap \m_2$. If, in addition, $R_1$ and $R_2$ are domains with the same field of fractions, we will say that $R_2$ \textbf{birationally dominates} $R_1$
\end{definition}
\noindent\textbf{Notation.} In the above situation we will write $(R_1,\m_1)<(R_2,\m_2)$ or simply $R_1<R_2$.
\medskip

\begin{remark} Let $(S,\m)$ be a local domain, contained in $F$. A valuation $\nu$ of $F$ is centered at $\m$ if and only if we have
$(S,\m)<(R_\nu,\m_\nu)$. 
\end{remark}
\begin{remark}\label{characterizingvaluaitonrings} (1) The valuation $\nu$ is uniquely determined by its valuation ring $R_\nu$. For a proof, see \cite{V0}, Proposition 1.4.
\medskip

(2) Consider a local domain $(R,\m)$ with field of fractions $F$. The following conditions are equivalent:

(a) $(R,\m)$ is of the form $(R_\nu,\m_\nu)$ for some valuation $\nu$ of $F$

(b) for every $f\in F$ either $f\in R_\nu$ or $f^{-1}\in R_\nu$ (or both)

(c) the ring $(R,\m)$ is maximal with respect to the relation of birational domination.
\medskip

For a proof, see \cite{V0}, Proposition 1.4 and \cite{Bo}, Chap. 6, \S2, n$^0$2, Th\'eor\`eme 1, page 85.

Below we reprove the equivalence (a)$\Longleftrightarrow$(c) in the special case when $R$ is a 2-dimensional regular local ring. 
\end{remark}
\medskip

We will use the following partial order on the set of valuations of the field $F$, centered at $R$:\\
For two valuations $\mu$ and $\mu'$ centered at $R$, we will say that $\mu\leq \mu'$ if $\mu(f)\leq\mu'(f)$ for all $f\in R$.\\

We denote by $\tilde{\nu}_{\m}$ the multiplicity valuation, that is, $\tilde{\nu}_{\m}(f):=max\{i\mid\ f\in\m^{i}\}$ for all $f\in R$. We note that
$\tilde{\nu}_{\m}$ is the smallest element of $\V$. We say that the  \textbf{multiplicity} of $f$ at $\m$ is $\tilde{\nu}_{\m}(f)$.\\

If $\nu$ is a valuation centered at $R$ then $\nu$ determines a Krull valuation $\nu'$, centered at $R$. Furthermore, $\rk(\nu')=2$ if and only if $\nu^{-1}(\infty)\neq\{0\}$. Indeed, if $\nu^{-1}(\infty)=\{0\}$, then $\nu=\nu'$ is a Krull valuation of rank $1$. Otherwise, if $\nu^{-1}(\infty)\neq\{0\}$, then $\nu^{-1}(\infty)$ is a principal prime ideal of $R$ generated by an irreducible element $f\in R$. For each $g\in R-\{0\}$, write $g=f^{s}h$, where $f$ does not divide $h$ (that is, $\nu(h)< \infty$). Define
$\nu'(g)=(s,\nu(h))\in \Z\oplus \R$. For $G=\frac{g_1}{g_2}\in F-\{0\}$, put $\nu(G)=\nu(g_1)-\nu(g_2)$. It is clear that $\nu'$ determines a Krull valuation on $F$, centered at $R$.

Conversely, if $\nu'$ is a  Krull valuation of $F$ centered at $R$ then $\nu'$ determines a valuation on $R$. Indeed, let $\m_{\nu'}$ be the maximal ideal of $R_{\nu'}$. If $\rk(\nu')=1$, put $\nu=\nu'$. Otherwise, let $\Gamma_1$ be the isolated subgroup of
$\Gamma$ of rank $1$ (that is, the smallest non-zero isolated subgroup of $\Gamma$). Let $P'$ be the prime ideal of $R_{\nu'}$ associated to $\Gamma_1$:
$$
P'=\left\{\left.f\in R_{\nu'}\ \right|\ \nu(f)\in\Gamma\setminus\Gamma_1\right\}.
$$
Let $P=P'\cap R$. For each $f\in R$, if $f\in P$, put $\nu(f)=\infty$. Otherwise, put
$$
\nu(f)=\nu'(f)\in\Gamma_1.
$$
Then $\nu$ is a valuation centered at $R$.

For an element $\beta\in\Gamma_{\nu}$, let $\mathbf{P}_{\beta}:=\left\{f\in F\ |\ \nu(f)\geq \beta\right\}$,
$\mathbf{P}_{\beta+}:=\left\{f\in F\ |\ \nu(f)>\beta\right\}$. Let $\gr_\nu
F=\bigoplus\limits_{\beta\in\Gamma_1}\frac{\mathbf{P}_{\beta}}{\mathbf{P}_{\beta+}}$. For an element $\beta\in\Gamma_1$ and an element $f\in F$ such that $\nu(f)=\beta$, we will denote by $\ini_\nu f$ the natural image of $f$ in
$\frac{\mathbf{P}_{\beta}}{\mathbf{P}_{\beta+}}$.

\section{Valuations and blowing ups}\label{blowup}

The aim of this section is to describe a natural order-preserving bijection between valuations of $F$ centered at $R$ and valuations of $k(x,y)$ centered at $k[x,y]_{(x,y)}$.
\begin{remark} Throughout the paper we will commit the following abuse of notation. We will use the letters $x,y$ to denote both the generators of the field $k(x,y)$ over $k$ and a regular system of parameters of $R$. Since in each case we will specify clearly with which ring we are working, this should cause no confusion.
\end{remark} 
A \textbf{simple} sequence $\pi^*$ of local point blowings up of $Spec\ R$ is a sequence of the form
\begin{equation}
(R,\m)\overset{\pi_1}\longrightarrow(R_1,\m_1)\overset{\pi_2}\longrightarrow\dots\overset{\pi_i}\longrightarrow(R_i,\m_i)
\overset{\pi_{i+1}}\longrightarrow\dots\label{eq:sequence}
\end{equation}
where $\pi_i$ is given by considering the blowing up $Spec\ R_{i}\overset{\pi^*_i}\longrightarrow Spec\ R_{i-1}$ along $\m_{i-1}$, picking a point $\xi_i\in{\pi^*_i}^{-1}(\m_{i-1})$ and putting $R_i:=\mathcal O_{X_i,\xi_i}$. Let $\Pi(R)$ denote the set of all the simple sequences (finite or infinite) of local point blowings up  of $Spec\ R$. Fix 
an element $f\in R\setminus\{0\}$.
Let $\mu$ denote the multiplicity of $f$ at $\m$. Assume that
\begin{equation}
f\notin\left(x,y^{\mu+1}\right).\label{eq:ymu}
\end{equation}

\begin{definition}
A monomial ideal in a regular local ring $A$, with regular system of parameters $(u_1,\dots,u_s)$ is an ideal in $A$ generated by monomials in $(u_1,\dots,u_s)$.
\end{definition}

Let $I(x,y,f)$ denote the smallest \textit{monomial} ideal containing $f$. (\ref{eq:ymu}) is equivalent to saying that $y^\mu\in
I(x,y,f)$.

Let $e(x,y,f):=\min\left\{\left.\frac\alpha{\mu-\beta}\ \right|\ x^\alpha y^\beta\in I(x,y,f),\beta<\mu\right\}\in\frac1{\mu!}\mathbb
Z\cup\{\infty\}$, where we adopt the convention that the minimum of the empty set is infinity.
\begin{definition} The \textbf{first characteristic exponent} of $f$ at $\m$ is the supremum of $e(x,y,f)$, where $(x,y)$ runs over all the regular systems of parameters of $R$ satisfying (\ref{eq:ymu}).
\end{definition}
Fix a real number $e$. For a real number $\xi$, let $I_\xi$ denote the monomial ideal of $R$ generated by all the monomials $x^\alpha y^\beta$ such that $\alpha+e\beta\ge \xi$.
\begin{definition} The \textbf{monomial} valuation $\nu_{x,y,e}$ of $R$, associated to the data $(x,y,e)$ is the valuation defned by $\nu_{x,y,e}(g)=\max\{\xi\in\mathbb R\ |\ g\in I_\xi\}$.
\end{definition}

Let $R^{*}$ denote the set of units in $R$.

\begin{proposition}\label{e<e0}
Let $e_0$ be the first characteristic exponent of $f$ at $\m$. Let $e=e(x,y,f)$. 

The following conditions are equivalent:
\begin{enumerate}
\item $e<e_0$,
\item $e$ is an integer, and there exists a regular system of parameters of the form $(y-ux^e,x)$, with $u$ a unit of $R$, satisfying $e(x,y,f)<e(x,y-ux^e,f)$.
\item 
$e$ is an integer and $\ini_{\nu_{x,y,e}}f$ is the $\mu$-th power of a linear form in $\ini_{\nu_{x,y,e}}y$ and $\ini_{\nu_{x,y,e}}x^e$; more specifically, there exist $c,d\in k$ such that
\begin{equation}
\ini_{\nu_{x,y,e}}f=(\ini_{\nu_{x,y,e}}y-c\ \ini_{\nu_{x,y,e}}x^e)^\mu.\label{eq:linearmu}
\end{equation}
\end{enumerate}
\end{proposition}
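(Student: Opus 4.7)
The plan is to prove the equivalence via the cycle $(3) \Rightarrow (2) \Rightarrow (1) \Rightarrow (3)$, with the equivalence $(2) \Leftrightarrow (3)$ serving as the computational heart.

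For $(2) \Leftrightarrow (3)$, I work in the completion $\hat R \cong k[[x,y]]$ and expand $f$ in $(x, y')$-coordinates, where $y' := y - ux^e$ with $u \in R^*$. For $(3) \Rightarrow (2)$: given $\ini_\nu f = d(\ini y - c\,\ini x^e)^\mu$ with $\nu = \nu_{x,y,e}$, I first observe that $c \neq 0$ (otherwise $\ini_\nu f = dY^\mu$, which together with the assumption $y^\mu \in I(x,y,f)$ would force $e(x,y,f) > e$, a contradiction). I lift $c$ to a unit $u \in R^*$ and verify $(x, y - ux^e)$ is a regular system of parameters using $e \geq 1$. The hypothesis on $\ini_\nu f$ then forces every monomial $x^\alpha (y')^\beta$ appearing in the $(x, y')$-expansion of $f$ with $\beta < \mu$ to satisfy $\alpha + e\beta > e\mu$, giving $e(x, y', f) > e$. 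Conversely, for $(2) \Rightarrow (3)$: if $y' = y - ux^e$ improves $e$, the only contribution to $\nu$-value $e\mu$ in the $(x, y')$-expansion of $f$ is $(y')^\mu$ (the cases $\beta < \mu$ and $\beta > \mu$ are excluded by the hypothesis and by $e \geq 1$, respectively). Hence $\ini_\nu f = d(\ini y - c\,\ini x^e)^\mu$ with $c = u \bmod \m$.

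The implication $(2) \Rightarrow (1)$ is immediate from the definition of $e_0$ as a supremum. The non-trivial direction is $(1) \Rightarrow (3)$: given an arbitrary regular system $(x', y')$ with $e(x', y', f) > e$, I must conclude both that $e \in \mathbb Z$ and that $\ini_\nu f$ is a $\mu$-th power of a linear form $\ini y - c\,\ini x^e$. My plan is to apply Weierstrass preparation in $\hat R$, writing $f = \hat u g$ with $g \in k[[x]][y]$ monic of degree $\mu$, so that $\ini_\nu g$ and $\ini_\nu f$ differ by a unit in $k^*$. The element $\ini_\nu g$ is weighted homogeneous in $\gr_\nu R \cong k[X, Y]$ (weights $1$ and $e$). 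A direct analysis of such weighted homogeneous polynomials shows that if $e = p/q$ in lowest terms with $q > 1$, the only linear form of weight $e$ is $cY$, so the only $\mu$-th power of a linear form is $c^\mu Y^\mu$, which the finiteness of $e = e(x,y,f)$ rules out; so $e$ must be integer. When $e \in \mathbb Z$, $\ini_\nu g$ factors over $\bar k$ as $a_\mu \prod_i (Y - c_i X^e)^{m_i}$ with $\sum_i m_i = \mu$.

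The main obstacle is the concluding step: showing that whenever $\ini_\nu g$ has two or more distinct factors over $\bar k$, no regular system of parameters can improve $e$, thereby contradicting the hypothesis on $(x', y')$. This requires a careful analysis of how an arbitrary coordinate change transforms the Newton polygon of $f$: computing $\nu(x')$ and $\nu(y')$ (one of which must equal $1$ since $(x', y')$ generates $\m$ and $\min(\nu(x), \nu(y)) = 1$), identifying the positions of $\ini_\nu x'$ and $\ini_\nu y'$ in the weighted pieces of $\gr_\nu R$, and deducing that the leading edge of the Newton polygon in $(x', y')$-coordinates has slope at most $-1/e$ whenever $\ini_\nu f$ has $\geq 2$ distinct factors. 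Once established, this forces $\ini_\nu f$ to be a $\mu$-th power of a single linear form $Y - cX^e$ with $c \in k$, completing $(1) \Rightarrow (3)$.
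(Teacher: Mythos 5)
Your cycle $(3)\Rightarrow(2)\Rightarrow(1)\Rightarrow(3)$ is a legitimate reorganization, and the two computational pieces you do carry out are sound: $(2)\Leftrightarrow(3)$ is essentially the paper's $2)\Rightarrow3)$ and $3)\Rightarrow1)$ computations (expanding $f$ in $(x,y-ux^e)$ and comparing $\nu_{x,y,e}$-values), and $(2)\Rightarrow(1)$ is indeed immediate from the definition of $e_0$ as a supremum. But the heart of the proposition is the remaining direction, and there you have only a plan, not a proof. The whole content of $(1)\Rightarrow(3)$ (equivalently, of the paper's $1)\Rightarrow2)$) is that an \emph{arbitrary} regular system of parameters $(x',y')$ with $e(x',y',f)>e$ forces $e\in\mathbb Z$ and forces the initial form to be a $\mu$-th power of $\ini y-c\,\ini x^e$. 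The paper does this by writing the general change of variables $x_1=a_1x+a_2y^{l_1}$, $y_1=b_1y+b_2x^{l_2}$, reducing to $x_1=x$, $b_1=1$, and then proving by an explicit comparison of monomial ideals (the inequalities culminating in $e'=l_2<e_1$ and $e=l_2$) that the improving change must be exactly of the form $y\mapsto y+ux^e$ with $e$ an integer. Your corresponding step --- ``deducing that the leading edge of the Newton polygon in $(x',y')$-coordinates has slope at most $-1/e$ whenever $\ini_\nu f$ has $\ge 2$ distinct factors'' --- is exactly what needs to be proved, and you explicitly label it the main obstacle and leave it unestablished. As it stands the proposal does not prove the proposition.

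There is also a flaw in the intended endgame of your $(1)\Rightarrow(3)$. You argue via factorization of the weighted-homogeneous initial form over $\bar k$ and claim that excluding the case of $\ge 2$ distinct factors ``forces $\ini_\nu f$ to be a $\mu$-th power of a single linear form $Y-cX^e$ with $c\in k$.'' Over a non-perfect residue field of characteristic $p$ with $p\mid\mu$ this inference fails: e.g.\ $Y^p-tX^{ep}$ with $t\in k\setminus k^p$ is a $p$-th power of a linear form over $\bar k$ but not of one with $c\in k$, and in that situation the proposition asserts that no improving coordinate change exists --- a case your dichotomy (``$\ge2$ distinct factors'' versus ``one factor'') never addresses. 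The paper avoids this entirely because its $1)\Rightarrow2)$ argument produces the unit $u\in R^*$ directly from the coordinate change, so $c$ is automatically the residue of $u$ and lies in $k$. You would need either to handle the purely inseparable one-root case separately or to restructure the hard direction along the lines of the paper's monomial-ideal computation.
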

\begin{proof}
1) $\Longrightarrow$ 2) Since $e_0>e$, there exists a change of coordinates
\begin{eqnarray}
x_1&=&a_1x+a_2y^{l_1},\\
y_1&=&b_1y+b_2x^{l_2}
\end{eqnarray}
with $a_1, a_2, b_1, b_2 \in R^{*}$ such that
\begin{equation}
e_1:=e(x_1,y_1,f)>e(x,y,f).\label{eq:e1}
\end{equation}
Replacing $x$ by $a_1^{-1}(x_1-a_2y^{l_1})$ does not change $e$, therefore, we may assume that $x_1=x$. Since $b_1\in R^{*}$,  we may assume that $b_1=1$. Now we will prove that $e=\l_2$.\\

Write
\begin{equation}
f=\sum\limits_{i+je_1\ge\mu e_1}a_{ij}x^{i}y_1^{j}=
\sum\limits_{i+je_1\ge\mu e_1}a_{ij}x^{i}\left(y+b_2x^{l_2}\right)^{j},\label{eq:substitution}
\end{equation}
where $a_{0\mu}\in R^*$. Consider a monomial of the form $a_{ij}x^iy_{1}^j$ with 
\begin{equation}
i+je_1\geq \mu e_1.\label{eq:geqmu1}
\end{equation}
The element $x^iy_{1}^j$ belongs to the monomial ideal of $R$ generated by the set
\[
\left\{\left.x^{i+sl_2}y^{j-s}\ \right|\ s\in\{0,\dots,j\}\right\}.
\]
Let
\begin{equation}
e'=\min\{e_1,l_2\}.\label{eq:e'}
\end{equation}
Let us prove that $e=e'$. Indeed, if $i,j$ satislfy (\ref{eq:geqmu1})  then
\begin{equation}
i\ge(\mu-j)e_1.\label{eq:mu-j}
\end{equation}
If $s\in\{0,\dots,j\}$ then, since $l_2\ge e'$, we obtain
\begin{equation}
i+sl_2\ge(\mu-j)e_1+sl_2\ge(\mu-(j-s))e'.\label{eq:+sl_2}
\end{equation}
Thus $e'\le e$. Combined with (\ref{eq:e1}) and (\ref{eq:e'}), this proves that
\begin{equation}
e'=l_2<e_1.\label{eq:e'=l2}
\end{equation}
Combining (\ref{eq:e'=l2}) with (\ref{eq:+sl_2}), we obtain
\begin{equation}
i+sl_2\ge(\mu-(j-s))l_2\label{eq:mu-je}
\end{equation}
and the inequality is strict unless $i=0$ and $j=\mu$. Thus
\[
e=\min\left\{\left.\frac\alpha{\mu-\beta}\ \right|\ x^\alpha
y^\beta\in I(x,y,f),\beta<\mu\right\}=\min\left\{\left.\frac{sl_2}{\mu-(\mu-s)}\ \right|\ s\in\{1,\dots\mu\}\right\}=l_2.
\]
Therefore, $e\in \N$ and $y_1=y+ux^e$ with $u\in R^{*}$ satisfies the conclusion of (2).\\

2) $\Longrightarrow$ 3) Let $e'=e(x,y-ux^e,f)$ and write
$f=(y-ux^e)^{\mu}+\sum\limits_{\begin{array}{l}i+je'\geq\mu e'\\(i,j)\ne(0,\mu)\end{array}}a_{ij}x^{i}(y-ux^e)^{j}$. To prove 3) it is sufficient to prove that
$\nu_{x,y,e}\left(\sum\limits_{\begin{array}{l}i+je'\geq\mu e'\\(i,j)\ne(0,\mu)\end{array}}a_{ij}x^{i}(y-ux^e)^{j}\right)>\mu e$. Now, $\sum\limits_{\begin{array}{l}i+je'\geq\mu e'\\(i,j)\ne(0,\mu)\end{array}}a_{ij}x^{i}(y-ux^e)^{j}$ is contained in the monomial ideal (with respect to $(x,y)$), generated by monomials of the form ${{j}\choose{s}}a_{ij}x^{i}y^{j-s}(ux^e)^{s}$ with $0\leq s\leq j$, $i+e'j\geq \mu e'$ and if $j=\mu$ then $i>0$. Now we have to prove that the quantity $q=i+se+(j-s)e$ is strictly greater than $\mu e$. We have $q=i+je$. If $j= \mu$, then $i>0$ and $q>\mu
e$. If $j>\mu$ then $q>\mu e$. If $j<\mu$, then $q=i+je\geq e'(\mu-j)+je=\mu e-\mu e+e'(\mu-j)+je=\mu e+(\mu-j)(e'-e)>\mu
e$. This completes the proof of (3).\\

3) $\Longrightarrow$ 1) Choose $u\in R^{*}$ such that the natural image of $u$ in $k$ is $c$. We have
$$
f=(y-ux^e)^{\mu}+\sum\limits_{i,j} a_{ij}x^{i}y^{j},
$$
with $\nu_{x,y,e}\left(\sum a_{ij}x^{i}y^{j}\right)>\mu e$,  that is $i+je>\mu e$ for all the $(i,j)$ appearing in the sum.\\
Put $y_1=y-ux^e$. We will prove that $e'=e(x,y_1,f)>e$.\\

We have $f=y_1^{\mu}+\sum\limits_{i,j}\sum\limits_{s=0}^{j}{{j}\choose{s}}a_{ij}x^{i+es}y^{j-s}$ with $i+je>\mu e$ for each $(i,j)$ in the sum.\\

Now we have $e'\geq \frac{i+es}{\mu-(j-s)}>\frac{(\mu-j)e+es}{\mu-(j-s)}=e$ whenever $(j-s)<\mu$.

\end{proof}

\begin{remark} Let $e$ denote the first characteristic exponent of $f$. If $R$ is quasi-excellent, $f$ is reduced and $\mu\ge2$, we have
\begin{equation}
e\in\frac1{\mu!}\mathbb N,
\end{equation}
that is, $1\le e<\infty$. Since in this paper we work with arbitrary regular two-dimensional local rings and not just the
quasi-excellent ones, we will not use this fact in the sequel.
\end{remark}
Fix a simple sequence of point blowings up as in (\ref{eq:sequence}). Let $\mu_i(f)$ and $e_i(f)$ denote, respectively, the multiplicity and the first characteristic exponent of the strict transform of $f$ in $R_i$.
\begin{lemma}\label{decreasing_order} At least one of the following conditions holds:

(1) $(\mu_{i+1},e_{i+1})<_{lex}(\mu_i,e_i)$

(2) $e_i(f)=\infty$.
\end{lemma}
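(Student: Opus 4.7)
The plan is to split on whether the multiplicity drops at the $(i+1)$-st blowing up. If $\mu_{i+1}<\mu_i$, condition (1) holds immediately. Otherwise $\mu_{i+1}=\mu_i=:\mu$; assuming $e_i<\infty$ (the complementary case being (2)), I aim to show $e_{i+1}<e_i$. I will use the classical fact that a point blowing up preserves multiplicity only when the tangent cone of the strict transform at $R_i$ is, up to a unit of $R_i$, a $\mu$-th power of a linear form $L$, and only when $\xi_{i+1}$ corresponds to the direction $L=0$ on the exceptional divisor. This lets me choose regular parameters $(x,y)$ of $R_i$ with $y\equiv L\pmod{\m_i^2}$; then the degree-$\mu$ part of $f$ is $y^\mu$ up to a unit, forcing $e(x,y,f)\ge 2$, and a regular system of parameters of $R_{i+1}$ is given by $(x,y_1)$ with $y_1:=y/x$.

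The core computation is the effect of the blowing up on the first characteristic exponent in these coordinates. Writing $e:=e(x,y,f)$ and
\[
f=y^\mu+\sum_{(i',j')\ne(0,\mu)}a_{i'j'}x^{i'}y^{j'},
\]
where every nonzero term satisfies $i'+ej'\ge e\mu$, the substitution $y=xy_1$ followed by division by $x^\mu$ yields
\[
f_1=y_1^\mu+\sum a_{i'j'}x^{i'+j'-\mu}y_1^{j'}.
\]
Since $(i'+j'-\mu)+(e-1)j'=(i'+ej')-\mu\ge (e-1)\mu$, with equality on the edge of the original Newton polygon through $(0,\mu)$, one reads off $e(x,y_1,f_1)=e-1$. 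The coefficient of $y_1^\mu$ in $f_1$ is a unit of $R_{i+1}$, so the analog of (\ref{eq:ymu}) holds for $(x,y_1,f_1)$.

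To promote this pointwise identity to $e_{i+1}=e_i-1$, I exploit the fact that the substitution $y=xy_1$ sets up a coefficient-preserving bijection between the $\nu_{x,y,e}$-initial form of $f$ and the $\nu_{x,y_1,e-1}$-initial form of $f_1$. Hence condition (3) of Proposition \ref{e<e0} holds for $(x,y,f)$ at level $e$ if and only if it holds for $(x,y_1,f_1)$ at level $e-1$, and the elementary parameter improvement $y\mapsto y-ux^e$ at $R_i$ corresponds to $y_1\mapsto y_1-ux^{e-1}$ at $R_{i+1}$. Iterating, the suprema defining $e_i$ and $e_{i+1}$ are reached in parallel at levels differing by exactly $1$, so $e_{i+1}=e_i-1<e_i$.

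The step I expect to require the most care is the parallel iteration and the passage to the limit when $e_i$ is not attained in finitely many elementary moves, which is smoothed by working in the $\m_i$-adic completion of $R_i$. A secondary technicality is the possible residue field extension $k\to R_{i+1}/\m_{i+1}$ when the linear form $L$ is not $k$-rational; this can be handled by faithfully flat descent after passing to an étale extension of $R_i$ that trivialises this extension.
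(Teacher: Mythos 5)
Your proposal is correct and takes essentially the same route as the paper: split off the case where the multiplicity drops, reduce to the case where the directional form of the blowing up is a $\mu$-th power of a linear form, compute that the adapted weighted order transforms by $e\mapsto e-1$, and use the equivalence with condition (3) of Proposition \ref{e<e0} to pass from the coordinate-dependent $e(x,y,f)$ to the intrinsic exponents $e_i$, $e_{i+1}$. The two technicalities you flag are actually vacuous --- termination of the parallel improvement process follows from the integrality statement in Proposition \ref{e<e0} (any non-maximal value of $e$ is an integer and each improvement strictly increases it, so finiteness of $e_i$ forces termination without passing to the completion), and $\xi_{i+1}$ is automatically $k$-rational since the directional form has degree one --- while the inequality $e\ge 2$ should be deduced from $\mu_{i+1}=\mu_i$ via your transform computation (the tangent cone being $y^\mu$ only gives $e>1$), which is how the paper handles the subcase $e<2$ (there the multiplicity drops).
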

\begin{proof} To simplify the notation, we will consider the case when $i=0$, so that $R_0=R$. Assume that $e_i(f)\neq\infty$. Let $f_1$ be the strict transform of $f$ in $R_1$. We will follow the notation of (\cite{Z}, Appendix 5, pagse 365--367). Namely, let
$\bar g$ denote the directional form of the local blowing up $\pi_1$ and $\bar f$ the natural image of $f$ in $\gr_{\mathfrak m}R$.\\

Let $\mu=\mu_0$ denote the multiplicity of $f$ at $\mathfrak m$.

Since $\bar f$ is a homogeneous polynomial of degree $\mu$, the greatest power of $\bar g$ that could divide $\bar f$ is $\bar
g^\mu$. If $\bar{g}^\mu$ does not divide $ \bar{f}$, then by (\cite{Z}, Appendix 5, page 367, Proposition 2), we have
$\mu_1<\mu$  and (1) of the Lemma holds.

Assume that $\bar g^\mu\ \left|\ \bar f\right.$. Then $\deg\bar g=1$ and there exists a regular system of parameters $(x,y)$ such that $\bar g= \bar{y}$ and
$$
f=y^{\mu}+\sum\limits_{i+j>\mu}a_{ij}x^{i}y^{j},
$$
where the $a_{ij}$ are units of $R$. Let $e=e_0$ denote the first characteristic exponent of $f$ and choose $(x,y)$ in such a way that $e=e(x,y,f)$. Write
$f=y^{\mu}+\sum\limits_{\begin{array}{l}i+je=\mu e\\(i,j)\ne(0,\mu)\end{array}}a_{ij}x^{i}y^{j}+\sum\limits_{i+je>\mu e}a_{ij}x^{i}y^{j}$. Since $e<\infty$, there exists $(i,j)\neq (0,\mu)$, with $i+je=\mu e$. 

Now
$f_1=y_1^{\mu}+\sum\limits_{\begin{array}{l}i+je=\mu e\\(i,j)\ne(0,\mu)\end{array}}a_{ij}x_1^{i+j-\mu}y_1^{j}+
\sum\limits_{i+je>\mu e}a_{ij}x_1^{i+j-\mu}y_1^{j}$.\\

Note that for each $(i,j)$ with $i+je=\mu e$ we have $(i+j-\mu)+j(e-1)=\mu(e-1)$ and for each $(i,j)$ with $i+je>\mu e$ we have
$(i+j-\mu)+j(e-1)>\mu(e-1)$.\\

If $s>e-1$ then for $(i,j)$ with $i+je=\mu e$ we have $(i+j-\mu)=(\mu-j)(e-1)<(\mu-j)s$, hence $(i+j-\mu)+js<\mu s$.

If $e-1<1$ then $\mu_1<\mu$. Otherwise, if $e-1\ge1$, the above considerations prove that $e(x_1,y_1,f_1)=e-1$.\\

By Proposition \ref{e<e0}, $\ini_{\nu_{x,y,e}}f$ is not a $\mu$-th power of a linear form in $\ini_{\nu_{x,y,e}}y$ and
$\ini_{\nu_{x,y,e}}x^e$. Hence $\ini_{\nu_{x_1,y_1,e-1}}f_1$ is not a $\mu$-th power of a linear form in
$\ini_{\nu_{x_1,y_1,e-1}}y_1$ and $\ini_{\nu_{x_1,y_1,e-1}}x_1^e$. Therefore $e_1=e(x_1,y_1,f_1)=e-1<e$. In all the cases (1) of the Lemma holds.
\end{proof}

\begin{lemma}\label{inf_blowing} Let $\pi^{*}$ be an infinite sequence of local blowings up belonging to $\Pi(R)$. Write $\pi^{*}$ as in  (\ref{eq:sequence}). Take an element $f\in R\setminus\{0\}$.

\noi(1) If $f$ is a unit in $R_{j_0}$ for some $j_0$, then $f$ is a unit in $R_j$ for all $j\in\mathbb N$.

\noi(2) At least one of the following conditions holds:

(a) there exists $i\in \N$ such that $f=x_{i}^{s}y_{i}^{t}u$ where $x_i$ and $y_i$ are regular parameters of $R_i$, $s$ and $t$ are natural numbers and $u$ is a unit of $R_i$

(b) there exists $i_0\in\mathbb N$ such that $e_i(f)=\infty$ for all $i\ge i_0$.

\end{lemma}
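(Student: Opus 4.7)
The plan for (1) is to use the containments $R_{j_0}\subset R_j$ when $j\ge j_0$ and $\mathfrak{m}_k\subset\mathfrak{m}_{k+1}$ in general. If $j\ge j_0$, both $f$ and $f^{-1}$ lie in $R_{j_0}\subset R_j$, so $f\in R_j^*$. If $j<j_0$, I argue by contradiction: if $f\in\mathfrak{m}_j$, an induction using that each $R_{k+1}$ birationally dominates $R_k$ yields $f\in\mathfrak{m}_k$ for every $k\ge j$, contradicting $f\in R_{j_0}^*$.

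For (2), first apply (1): if $f$ is a unit in some $R_j$, then $f$ is a unit in every $R_j$ and (a) holds trivially with $s=t=0$. So assume $f$ is a non-unit in every $R_j$, which gives $\mu_i\ge 1$ for all $i$. The sequence $\mu_i$ is non-increasing (either invoke the classical fact that blowing up a regular point does not increase the multiplicity of the strict transform, or use Lemma \ref{decreasing_order} when $e_i<\infty$ and an elementary computation of the strict transform when $e_i=\infty$), so it stabilizes at some $\mu^*\ge 1$ from an index $i_1$ onwards.

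The next step is to produce an $i_2\ge i_1$ with $e_{i_2}=\infty$. For any $i\ge i_1$ with $e_i<\infty$, Lemma \ref{decreasing_order} forces $(\mu^*,e_{i+1})<_{\mathrm{lex}}(\mu^*,e_i)$, and inspection of its proof gives $e_{i+1}=e_i-1$. Since $e_i>0$, iterated subtraction of $1$ cannot preserve positivity, so some $e_{i_2}$ equals $\infty$. If the supremum defining $e_{i_2}=\infty$ is attained by regular parameters $(x,y)$ of $R_{i_2}$, then $f_{i_2}=y^{\mu^*}u$ with $u\in R_{i_2}^*$, whence $f$ --- which differs from $f_{i_2}$ only by an exceptional monomial in the parameters of $R_{i_2}$ --- is of the shape (a), and we are done.

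The main obstacle is the case where the supremum at $e_{i_2}$ is not attained: one must show that $e_i=\infty$ for every $i\ge i_2$, which is condition (b). By Proposition \ref{e<e0}, for arbitrarily large $n\in\N$ there exist regular parameters $(x,y_n)$ of $R_{i_2}$ with $e(x,y_n,f_{i_2})=n$. Applying the computation from the proof of Lemma \ref{decreasing_order} with these parameters produces a regular parameter system of $R_{i_2+1}$ whose $e$-invariant equals $n-1$; the transfer requires that $y_n/x-c$ be a regular parameter of $R_{i_2+1}$ for $c:=(y_n/x)(\xi_{i_2+1})\in k$, a quantity that one verifies is well defined and independent of $n$ for $n$ large. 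Letting $n\to\infty$ gives $e_{i_2+1}=\infty$, and an obvious induction on $i$ completes the proof.
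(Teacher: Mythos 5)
Your route (stabilize the multiplicity, then either reach an attained $e=\infty$, giving (a), or propagate $e=\infty$ forward, giving (b)) is genuinely different from the paper's, and its skeleton can be made to work, but as written it has two concrete gaps. First, the inference ``$f$ is a non-unit in every $R_j$, hence $\mu_i\ge 1$ for all $i$'' is false: $\mu_i$ is the multiplicity of the \emph{strict} transform $f_i$, not of $f$. For example, for $f=xy$ and a sequence whose centers avoid the strict transforms of the two axes, $f$ stays in every $\m_j$ while $f_1$ is already a unit, so $\mu_1=0$. The repair is that $\mu_i=0$ for some $i$ puts you directly in case (a) --- but only via the second gap: in that repair, in your attained case, and in your opening reduction, you assert without proof that $f$ ``differs from $f_{i}$ only by an exceptional monomial in the parameters of $R_{i}$'', i.e.\ that the total transform of $f$ in $R_i$ is a unit times a monomial in a regular system of parameters of $R_i$ times $f_i$. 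That statement is precisely what the paper proves, by induction from $f_{i-1}=x_i^{\mu_{i-1}}f_i$ together with the transformation rules ($x_i=x_{i+1}y_{i+1},\,y_i=y_{i+1}$, or $x_i=x_{i+1}$ with $y_i/x_i$ a unit or equal to $y_{i+1}$); it cannot be taken as given, since it is the whole content of conclusion (a).

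In the non-attained case your plan is sound but is only sketched at its crucial point. What must be checked is that for all large $n$ the center $\xi_{i_2+1}$ lies in the direction $\overline{y_n}$, equivalently $y_n/x\in\m_{i_2+1}$, so that $(x,\,y_n/x)$ is a regular system of parameters of $R_{i_2+1}$ (in fact $c=0$, not merely ``independent of $n$''); this follows because $\mu_{i_2+1}=\mu^*\ge1$ while $e(x,y_n,f_{i_2})=e_n>1$ forces the initial form of $f_{i_2}$ to be a unit times $\overline{y_n}^{\,\mu^*}$, so by the result of Zariski invoked in Lemma \ref{decreasing_order} any other choice of point would drop the multiplicity. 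Also, the transfer gives only $e(x,y_n/x,f_{i_2+1})\ge e_n-1$, not equality: the equality step in the proof of Lemma \ref{decreasing_order} uses Proposition \ref{e<e0} with $e$ equal to the first characteristic exponent, whereas here $e_n$ lies strictly below it, so the initial form \emph{is} a $\mu$-th power; the inequality is all you need. Finally, note that the entire attained/non-attained dichotomy is avoidable: the paper argues by contraposition --- if (b) fails, pick $i$ with $e_i<\infty$; as long as $\mu$ does not drop, Lemma \ref{decreasing_order} forces a strictly decreasing sequence of finite values in $\frac1{\mu!}\Z$, so the multiplicity drops infinitely often, eventually $f_j$ is a unit, and the monomial bookkeeping yields (a). That is substantially shorter; your propagation argument for $e=\infty$ under stable multiplicity is a nice extra fact, but it is not needed for the disjunction.
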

\begin{proof}
\begin{enumerate}
\item This follows directly from the fact that for all $j\in \N$, we have $\m_{j}=R_{j}\cap\m_{j+1}$.
\item First note that if $x_i$ and $y_i$ are regular parameters of $R_i$, then either $x_i=x_{i+1}y_{i+1}$ and $y_i=y_{i+1}$ or $x_i=x_{i+1}$ and $y_i=\frac{y_i}{x_i} x_{i+1}$, with $\frac{y_i}{x_i}$ is either a unit in $R_{i+1}$ or equal to $y_{i+1}$. 

Assume that condition $(b)$ does not hold. From Lemma \ref{decreasing_order} we deduce that for each $j'\in\N$ there exists $j>j'$ with $\mu_{j}<\mu_{j'}$. Hence there exists $j\in \N$ with $f_j\in R^{*}$. Now by definition of $f_j$, $f_{j-1}=x^{\mu_{j-1}}f_j$, therefore, using the paragraph above and induction, we get the result.

\end{enumerate}
\end{proof}

For an element $\pi^{*}\in\Pi(R)$ we denote $\bar{R}=\bigcup\limits_{i} R_i$. The ring $\bar{R}$ is an integral domain with quotient field $F$, dominating $R$ and $R_i$ for each $i\in \N$.
 
\begin{proposition}\label{singular}
Assume that there exists $f\in R$, satisfying condition (2)(b) in Lemma \ref{inf_blowing}. Then there exists a unique Krull valuation $\nu$ on $F$ such that $R_{\nu}$ dominates $\bar{R}$.\\ 

Moreover, we have:

\begin{enumerate}
\item $\rk\ \nu=2$.
\item The set of elements in $R$ satisfying condition (2)(b) in Lemma \ref{inf_blowing} is a prime ideal generated by an irreducible element $g$.
\item let $g_i$  denote the strict transform of $g$ in $R_i$. Then 
$\nu$ is the composition of the $g$-adic valuation of $F$ with the unique rank one Krull valuation, centered in the one-dimensional local rings $\frac{R_i}{(f_i)}$ for each $i\in\mathbb N$.
\end{enumerate}
\end{proposition}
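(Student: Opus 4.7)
The plan is to identify the set $I$ of elements of $R$ satisfying Lemma \ref{inf_blowing}(2)(b) as a principal height-$1$ prime $(g)$, construct $\nu$ as the composite of the $g$-adic DVR of $F$ with a rank-$1$ valuation on $F_g:=\operatorname{Frac}(R/(g))$, and then establish uniqueness together with the remaining assertions.

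First I would show that $I$ is a principal prime. By Lemma \ref{inf_blowing}(1) combined with Lemma \ref{decreasing_order}, $f\in I$ if and only if $\mu_j(f)>0$ for every $j$. Additivity $\mu_j(fh)=\mu_j(f)+\mu_j(h)$ and the inequality $\mu_j(f+h)\geq\min(\mu_j(f),\mu_j(h))$ give that $I$ is an ideal; primality follows since if $f,h\notin I$ then each becomes a unit in some $R_j$ and hence so does $fh$. At each blowup at most one of the two coordinate divisors passes through $\xi_{j+1}$, so at most one of $x,y$ can lie in $I$, giving $I\subsetneq\m$. Because $R$ is a 2-dimensional regular local ring (hence a UFD) and $I\neq(0)$ by hypothesis, $I=(g)$ for some irreducible $g$; this proves (2).

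To construct $\nu$, set $A_j:=R_j/(g_j)$; each $A_j$ is a 1-dimensional local domain. The relation $g_j=g_{j+1}\cdot(\text{exceptional})$ together with primality of $g_{j+1}$ in the UFD $R_{j+1}$ yields $(g_{j+1})\cap R_j=(g_j)$, giving injective local homomorphisms $A_j\hookrightarrow A_{j+1}$ sharing the fraction field $F_g$. The transition $A_j\to A_{j+1}$ is the blowup of $A_j$ at $\m_{A_j}$ localized at the point of the strict transform of $V(g_j)$ distinguished by $\xi_{j+1}$, so the sequence $(\xi_j)$ picks out one analytic branch of $V(g)$; let $\bar\nu$ denote the unique rank-$1$ valuation ring of $F_g$ associated to that branch (equivalently the unique valuation whose center in every $A_j$ is $\m_{A_j}$). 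Setting $\nu(f):=\bigl(\nu_g(f),\bar\nu(\overline{f/g^{\nu_g(f)}})\bigr)\in\mathbb Z\oplus_{\mathrm{lex}}\mathbb Z$, where $\nu_g$ denotes the $g$-adic DVR of $F$, gives a rank-$2$ Krull valuation. This exhibits $\nu$ as the composition in (3) and proves (1); domination of $\bar R$ is immediate since for any $f\in\m_{\bar R}$ either $g\mid f$ (forcing positivity in the first coordinate) or the residue $\bar f\in A_j$ lies in $\m_{A_j}$ for the relevant $j$ (forcing positivity in the second).

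For uniqueness, let $\mu$ be any Krull valuation of $F$ whose valuation ring dominates $\bar R$. Writing $g=g_j\prod_{k<j}t_k^{\mu_k(g)}$ in $R_j$, where $t_k$ is the exceptional parameter of the $(k{+}1)$st blowup, gives $\mu(g)=\mu(g_j)+\sum_{k<j}\mu_k(g)\mu(t_k)$ with $\mu(g_j),\mu(t_k)>0$. For any $h\in\m\setminus(g)$, Lemma \ref{inf_blowing}(2)(a) (which must hold since $h\notin I$) gives $h=x_i^s y_i^t u$ for some $i$, parameters $(x_i,y_i)$ of $R_i$ and $u\in R_i^*$, so $\mu(h)=s\mu(x_i)+t\mu(y_i)$ is a finite $\mathbb Z$-combination of $\mu$-values of $\m_{\bar R}$. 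The first identity keeps $\sum_{k<j}\mu_k(g)\mu(t_k)<\mu(g)$ for every $j$, and combined with the second this forces $\mu(g)$ to sit in an isolated subgroup of $\Gamma_\mu$ strictly larger than the one generated by $\mu(\m\setminus(g))$. Hence $\mu$ has rank $\geq 2$ with $g\in\mathfrak p_\mu$ and $\mathfrak p_\mu\cap R=(g)$, so $(R_\mu)_{\mathfrak p_\mu}$ equals the unique DVR of $F$ centered at $(g)$ (that is, $R_{\nu_g}$), and $R_\mu/\mathfrak p_\mu$ is a rank-$1$ valuation ring of $F_g$ dominating $\bar A:=\bar R/\bigcup_j(g_j)R_j$, hence equals $\bar\nu$. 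Therefore $R_\mu=R_\nu$. The main technical obstacle is making the $1$-dimensional uniqueness of $\bar\nu$ precise without assuming $R$ is quasi-excellent, where the chain $A_j$ need not literally stabilize at a DVR; I plan to handle this by showing the sequence $(\xi_j)$ determines a unique place of $F_g$, and any valuation ring dominating all $A_j$ must coincide with the valuation attached to this place.
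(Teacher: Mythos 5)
Your route is genuinely different from the paper's: you want to prove assertion (2) first, by hand, then build $\nu$ as a composite valuation and check uniqueness, whereas the paper starts from an arbitrary valuation ring dominating the domain $\bar R$, proves $\rk\,\nu=2$ from the inequality $\nu(f_{i_0})>i\mu_{i_0}\nu(x_{i_0})$, and only then obtains the prime as the contraction $P'\cap R$ of the rank-one prime of $R_\nu$, so that primality is automatic. Unfortunately your first step rests on a false inequality: $\mu_j(f+h)\ge\min(\mu_j(f),\mu_j(h))$ fails for multiplicities of \emph{strict} transforms. Take $R=k[x,y]_{(x,y)}$ and the simple sequence whose centers follow the strict transform of $\{y=0\}$ (so $g=y$): then $f=y^2$ and $h=x^5-y^2$ have $\mu_1(f)=\mu_1(h)=2$, while $f+h=x^5$ has strict transform a unit in $R_1$, so $\mu_1(f+h)=0$. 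The failure persists even with both summands in $I$: $f=y^2$ and $h=y(x^5-y)$ give $\mu_1(f)=\mu_1(h)=2$ but $\mu_1(f+h)=\mu_1(yx^5)=1$. (The inequality does hold for multiplicities of total transforms, but then the characterization ``$f\in I$ iff $\mu_j(f)>0$ for all $j$'' is wrong, since every element of $\m$ has positive order in every $R_j$.) So your argument that $I$ is closed under addition does not work. What is actually needed is that, up to units, at most one irreducible element of $R$ keeps strictly positive strict-transform multiplicity at every center; granting that, $f\in I$ iff $f$ is divisible by this irreducible (using additivity of $\mu_j$ over factors and the fact that $\mu_j$ of a fixed factor is non-increasing in $j$), and $I=(g)$ follows. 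That uniqueness of the ``branch followed by the centers'' is genuine extra content --- provable, for instance, from the fact that the intersection multiplicity $\dim_k R/(p_1,p_2)$ of two coprime irreducibles strictly decreases under blowing up a common point --- and it is not supplied by the two displayed properties of $\mu_j$.

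A second gap is the one you flag yourself: the uniqueness of the rank-one valuation $\bar\nu$ of $F_g$ dominating all the $A_j$ is announced (``I plan to handle this\dots'') but not proved, and since $R$ is not assumed quasi-excellent the chain $A_j$ need not stabilize at a discrete valuation ring; this point does require an argument (the paper is admittedly also terse here, deducing its part (3) from (1) and (2)). Two smaller remarks: you invoke ``primality of $g_{j+1}$ in the UFD $R_{j+1}$'', but the strict transform of an irreducible element need not stay irreducible, so the inclusion $(g_{j+1})\cap R_j=(g_j)$ should be argued via an irreducible factor of $g_{j+1}$ (it is not an associate of the exceptional parameter, so its contraction to $R_j$ is a height-one prime containing $g_j$, hence equals $(g_j)$); and in the uniqueness step the passage from ``$\mu(g)$ exceeds arbitrarily large finite sums of the values $\mu(t_k)$'' to ``$\mu(g)$ lies outside the smallest isolated subgroup'' needs the same kind of bookkeeping the paper does with the single fixed parameter $x_{i_0}$, where the inequality $\nu(f_{i_0})>i\mu_{i_0}\nu(x_{i_0})$ for all $i$ immediately shows $\nu(f_{i_0})$ is not archimedean-equivalent to $\nu(x_{i_0})$.
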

\begin{proof}

Since $\bar{R}$ is an integral domain with field of fractions $F$, there exists a valuation ring $R_{\nu}$ dominating $\bar{R}$.Let $\Gamma$ denote the value group of this valuation. Now to prove the uniqueness of $\nu$ it is sufficient to prove the conditions (1), (2) and (3).

\begin{enumerate}
\item Let $f_i$ denote the strict transform of $f$ in $R_i$. We have $\mu_i=\mu_{i_0}>0$ for all $i\geq i_0$. Let $(x_{i_0},y_{i_0})$ be a regular system of parameters of $R_{i_0}$ such that $f_{i_0}=y_{i_0}^{\mu_{i_0}}+$ terms of higher degree. Since
$\mu_i=\mu_{i_0}$, we have $x_{i_0}=x_{i_0+1}$. Now $f_{i_0+1}=y_{i_0}^{\mu_{i_0}}+$ terms of higher or equal degree. Repeating the same reasoning, we see that for each $i\geq i_0$ we have $x_i=x_{i_0}$ and $f_{i}=x_{i+1}^{\mu_{i+1}}f_{i+1}=x_{i_0}^{\mu_{i_0}}f_{i+1}$. Thus $f_{i_0}=x_{i_0}^{(i-i_0)\mu_{i_0}}f_{i}$ for each $i>i_0$. Since $f_i\in R_{\nu}$ for all $i$, we have $\nu(f_i)>0$ for all $i$. Hence $\nu(f_{i_0})>i\mu_{i_0}\nu(x_{i_0})$ for all $i$, so $\nu(f_{i_0})$ cannot belong to a subgroup of $\Gamma$ of  rank $1$. Therefore $rk(\nu)=2$.\\ 

\item Let $\Gamma_1$ denote the isolated subgroup of $\Gamma$ of rank $1$ (that is, the unique proper non-trivial subgroup of
$\Gamma$). Let $P'$ be the prime ideal of $R_{\nu}$ associated to $\Gamma_1$. Let $P=P'\cap R$. Then $P$ is a prime ideal of height $1$ in $R$, therefore it is generated by an irreducible element $g$. Now $f=hg^n$ with $h\notin P$, hence $\nu(h)\in \Gamma_1$. Therefore, by the proof of (1), there exists $i$ such that $h_i$ the strict transform of $h$ in $R_i$ is a unit. Now the strict transform of $f$ in $R_i$ is $h_i.g_i^{n}$. Therefore $g$ must also satisfy condition (2)(b) in Lemma \ref{inf_blowing}. An element of $R$ satisfies condition (2)(b) in Lemma \ref{inf_blowing} if and only if it belongs to $P$.

\item This is a direct consequence of (1) and (2).
\end{enumerate}
\end{proof}

\begin{proposition}\label{val_ring_blowing}
Let $\pi^{*}$ be an element of $\Pi(R)$ and write $\pi^{*}$ as in (\ref{eq:sequence}). Suppose that $R$ does not contain an element $f$ satisfying condition (2) (b) of Lemma \ref{inf_blowing}. The following statements hold.

(1) The ring $\bar{R}$ is a valuation ring  with field of fractions $F$, dominating $R$ and $R_i$ for each $i\in\mathbb N$.

(2) Conversely, let $R_\mu$ be a valuation ring with field of fractions $F$, dominating $R$ and $R_i$ for each $i\in\mathbb N$. Then $R_\mu=\bar R$.
\medskip

In other words, the simple blowing up sequence $\pi^*$ and the valuation $\mu$ determine each other uniquely; they are equivalent sets of data.
\end{proposition}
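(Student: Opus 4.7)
The plan is to deduce (1) from Remark \ref{characterizingvaluaitonrings}(2)(b): the local domain $\bar R$ will be the valuation ring of some valuation of $F$ as soon as, for every $f\in F^*$, either $f\in\bar R$ or $f^{-1}\in\bar R$. Part (2) will then follow from the maximality of valuation rings under birational domination, recorded in Remark \ref{characterizingvaluaitonrings}(2)(c).

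For (1), I would write $f=g/h$ with $g,h\in R\setminus\{0\}$. Since no element of $R$ satisfies condition (2)(b) of Lemma \ref{inf_blowing}, Lemma \ref{inf_blowing}(2)(a) applies to both $g$ and $h$; and taking the larger of the two indices (noting that monomial form is preserved under further blowings up) one can express, in a common $R_j$,
\[
f=x_j^{a}y_j^{b}u,\qquad u\in R_j^*,\ a,b\in\Z.
\]
If $a,b\ge 0$ then $f\in R_j$, and if $a,b\le 0$ then $f^{-1}\in R_j$; hence one may assume $a>0>b$.

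The next step is to follow the exponent vector $(a_k,b_k)$ of $f$ in $R_k$ for $k\ge j$. Each blowing up $\pi_{k+1}$ is classified by its center $\xi_{k+1}$ on the exceptional curve. If $\xi_{k+1}$ is a point other than the two ``origins'' of the standard affine charts, then one of the old parameters will become the other times a unit of $R_{k+1}$, so $f$ will take the form $x_{k+1}^{a_k+b_k}\cdot(\text{unit})$, placing $f$ or $f^{-1}$ in $R_{k+1}$. Otherwise the exponent vector evolves by one of the shears $T_1:(a,b)\mapsto(a+b,b)$ or $T_2:(a,b)\mapsto(a,a+b)$. The main step is a short case check showing that, as long as the signs of $(a_k,b_k)$ remain mixed, the positive integer $|a_k|+|b_k|=a_k-b_k$ strictly decreases (by $-b_k$ after a $T_1$ step, by $a_k$ after a $T_2$ step), while in the remaining cases the new exponents have the same sign and $f$ or $f^{-1}$ lies in $R_{k+1}$. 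Since a positive integer cannot decrease indefinitely, the mixed regime must terminate, yielding the required $R_k$.

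The identities $\bar R^*=\bigcup R_i^*$ and $\m_{\bar R}=\bigcup\m_i$ are immediate, and together they give that $\bar R$ dominates every $R_i$ birationally. For (2), any valuation ring $R_\mu$ with fraction field $F$ dominating each $R_i$ will contain $\bar R$ and satisfy $\m_\mu\supset\bigcup\m_i=\m_{\bar R}$, so $R_\mu$ will birationally dominate $\bar R$; since $\bar R$ is itself a valuation ring by (1), Remark \ref{characterizingvaluaitonrings}(2)(c) will force $R_\mu=\bar R$. The hard part will be the monotonicity argument for the exponent vector in the origin-blowing-up case; the free-point case and the domination/maximality steps are essentially formal once the characterizations of valuation rings in Remark \ref{characterizingvaluaitonrings} are in hand.
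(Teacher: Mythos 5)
Your proposal is correct and takes essentially the same route as the paper: monomialize $f$ via Lemma \ref{inf_blowing}, track the exponent pair under further blowings up (with $|s|+|t|$ strictly decreasing as long as the signs stay mixed, and the free-point case giving a power of a single parameter), conclude that $\bar R$ is a valuation ring via Remark \ref{characterizingvaluaitonrings}(2), and deduce (2) from $\bar R<R_\mu$ together with maximality under birational domination --- which is exactly the paper's first argument for (2). The only omission is the case of a finite sequence $\pi^*$ (allowed in $\Pi(R)$), where Lemma \ref{inf_blowing} does not apply but the conclusion is immediate because $\bar R=R_n$ has principal maximal ideal and is therefore a discrete valuation ring.
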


\begin{proof} (1) Since $R<R_i<R_j$ for all natural numbers $i\le j$, $\bar{R}$ is a domain with quotient field $F$, dominating $R$ and $R_i$ for each $i\in\mathbb N$.
\medskip
First, consider the case when the sequence
$$
\pi^{*}:(R,\m)\overset{\pi_1}\longrightarrow(R_1,\m_1)\overset{\pi_2}\longrightarrow\dots\overset{\pi_n}\longrightarrow
(R_n,\m_n)
$$
is finite. Then $\bar{R}=R_n$. By definition, $\m_n$ is principal and $(R_n,\m_n)$ is a discrete valuation ring.\\

Next, assume that $\pi^{*}$ is infinite. To prove that $\bar{R}$ is a valuation ring, consider an element $f\in F^*$, and write
$f=\frac{f_1}{f_2}$ where $f_1$, $f_2\in R\setminus\{0\}$.\\

By Lemma \ref{inf_blowing} there exists $i\in \N$ such that $f_1=x_i^{s_1}y_i^{t_1}u_1$ and $f_2=x_i^{s_2}y_i^{t_2}u_2$, where $x_i$ and $y_i$ are local parameters in $R_i$, $s_1$, $t_1$, $s_2$ and $t_2$ are natural numbers and $u_1$, $u_2$ are units in $R_i$. Hence
\begin{equation}
f=x_i^sy_i^tu,\label{eq:fmonomial}
\end{equation}
where $s$ and $t$ are integers (not necessarily positive) and $u$ is a unit in $R_i$. If both $s$ and $t$ are non-negative then $f\in R_i\subset\bar R$, as desired.  If both $s$ and $t$ are non-postive then $\frac 1f\in R_i\subset\bar R$, as desired.  Otherwise assume, without loss of generality, that $s>0$ and $t<0$. Now after another blowing up, we have the following three possibilities:
 \begin{eqnarray}
 f&=&x_{i+1}^sy_{i+1}^{s+t} u\quad\text{ or}\label{eq:ss+t}\\
 f&=&x_{i+1}^{s+t}y_{i+1}^t u \quad\text{ or}\label{eq:s+tt}\\
 f&=& y_{i+1}^{s+t} v\label{eq:onlyoneparamter}
 \end{eqnarray}
 where $x_{i+1}$ and $y_{i+1}$ are local parameters in $R_{i+1}$ and, in the last equation, $v$ is a unit in $R_{i+1}$. If (\ref{eq:onlyoneparamter}) holds and $s+t\geq 0$ then $f\in R_{i+1}\subset \bar{R}$. If (\ref{eq:onlyoneparamter}) holds and $s+t\le0$ then $f^{-1}\in R_{i+1}\subset \bar{R}$. According to Remark \ref{characterizingvaluaitonrings} (2), $\bar R$ is a valuation ring.
 Finally, if either (\ref{eq:ss+t}) or (\ref{eq:s+tt}) holds, we notice that the blowing up $\pi_{i+1}$ has strictly decreased the quantity $|s|+|t|$. Since this quantity cannot decrease indefinitely, after finitely many steps we will arrive either at (\ref{eq:fmonomial}) with 
 $s$ and $t$ of the same sign or at (\ref{eq:onlyoneparamter}), thus reducing the problem to one of the previous cases. Note also that if $f$ is of the form (\ref{eq:fmonomial}) with $s$ and $t$ of the same sign then the blowing up $\pi_{i+1}$ brings $f$ to the form (\ref{eq:onlyoneparamter}). This completes the proof of (1).
\medskip

(2) Conversely, let $R_\mu$ be a valuation ring such that $R_i<R_{\mu}$ for all $i\in\mathbb N$. Taking the direct limit as $i$ tends to infinity, we obtain $\bar R<R_\mu$. Now part (2) follows from (1) and the implication (a)$\Longrightarrow$(c) of Remark \ref{characterizingvaluaitonrings} (2). However, we give below a direct proof of (2) for the sake of completeness.

If the sequence $\pi^*$ is finite, its last ring $R_n$ is a discrete valuation ring. Let $x_n$ be a local parameter of $R_n$. We have
$\mu(x_n)>0$ since $\mu$ is centered at $\m_n$. Now any element $f$ of $F^*$ can be written as $f=x_{n}^{s}u$ where $s\in\Z$ and $u$ is a unit of $R_n$ (hence also a unit of $R_{\mu}$) and therefore $f\in R_n$ if and only if $f\in R_{\mu}$.\\ 

If $\pi^{*}$ is infinite, let $f\in F^*$. As in the proof of part (1), there exists $i$ such that $f=x_i^{s}u$ or $f=y_{i}^{s}u$ with $x_i$ and $y_i$ local parameters in $R_i$, $s\in \Z$ and $u$ is a unit in $R_i$. Now since $\pi^{*}$ is infinite, we have $\nu(x_i)>0$ and $\nu(y_i)>0$ and $u$ is also a unit of $R_{\mu}$ since $R_{i}<R_{\mu}$. Hence to say that $f\in \bar{R}$ is equivalent to saying that $s\geq 0$, which is equivalent to saying that $\mu(f)\geq 0$. Therefore
$\bar{R}=R_{\mu}$.
\end{proof}

\begin{corollary}\label{val_blow_onetoone} The set of valuations of $F$ centered at $R$ is in a natural one-to-one correspondence with $\Pi(R)$.
\end{corollary}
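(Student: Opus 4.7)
The plan is to assemble Propositions \ref{val_ring_blowing} and \ref{singular} into a bijection $\V \longleftrightarrow \Pi(R)$, by exhibiting explicit maps in both directions and verifying they are mutual inverses.

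For the forward map $\pi^* \mapsto \nu$, set $\bar{R} = \bigcup_i R_i$. If $\pi^*$ is finite, then $\bar R = R_n$ is a DVR (as noted in the proof of Proposition \ref{val_ring_blowing}), which gives a rank-one discrete element of $\V$. If $\pi^*$ is infinite, Lemma \ref{inf_blowing}(2) offers two alternatives. When no $f\in R$ satisfies condition (2)(b) of Lemma \ref{inf_blowing}, Proposition \ref{val_ring_blowing}(1) asserts that $\bar R$ is a valuation ring of $F$ dominating $R$, which after normalization determines a unique element of $\V$. When some $f$ satisfies (2)(b), Proposition \ref{singular} furnishes a unique rank-$2$ Krull valuation $\nu$ whose ring dominates $\bar R$, and the Krull-to-$\infty$-valued correspondence from the end of \S\ref{basics} converts this into the required element of $\V$.

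For the inverse map $\nu \mapsto \pi^*$, I proceed recursively: set $R_0 = R$ and, having constructed $(R_i, \m_i) \subset R_\nu$ with $\nu$ centered at $\m_i$, let $X_{i+1} \to \mathrm{Spec}\, R_i$ be the blowing up of $\m_i$, take $\xi_{i+1} \in X_{i+1}$ to be the center of $\nu$ (well-defined and lying in the exceptional fibre because $R_\nu$ dominates $R_i$ and $\nu(\m_i)>0$), and set $R_{i+1} = \mathcal O_{X_{i+1},\xi_{i+1}}$. If $\xi_{i+1}$ is ever the generic point of the exceptional divisor, $R_{i+1}$ is a DVR and the sequence is declared finite of length $i+1$; otherwise it continues indefinitely. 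This produces a well-defined element of $\Pi(R)$.

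To verify that these maps are mutual inverses, start with $\nu \in \V$, build $\pi^*$ as above, and pass back to a valuation: the resulting valuation ring dominates $\bigcup_i R_i = \bar R$, so the uniqueness clauses in Propositions \ref{val_ring_blowing}(2) and \ref{singular} force it to equal $R_\nu$. Conversely, starting from $\pi^*$, forming $\nu$, and running the recursive construction, the center of $\nu$ in each $R_i$ must coincide with $\m_i$ (since $R_i \subset R_\nu$ and $\nu$ is centered at $\m_i$), and the chosen point on the next blowup is forced to equal $\xi_{i+1}$, recovering $\pi^*$. The most delicate case is the rank-$2$ situation of Proposition \ref{singular}, in which $\bar R$ itself is not a valuation ring; here the description in Proposition \ref{singular}(3) of $\nu$ as the composition of the $g$-adic valuation with a rank-one valuation is what guarantees that the recursive centers of $\nu$ agree at every stage with the originally chosen $\xi_{i+1}$.
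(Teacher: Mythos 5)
Your proposal is correct and follows essentially the same route as the paper: both directions of the correspondence are obtained from Propositions \ref{val_ring_blowing} and \ref{singular} (splitting into the case where $\bar R$ is a valuation ring and the rank-$2$ case), with the inverse map given by recursively taking centers of $\nu$ on successive blowups, exactly as in the paper's proof. The only difference is that you spell out the mutual-inverse verification that the paper dismisses as ``clear,'' which is a harmless elaboration.
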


\begin{proof}

For the sake of completeness, we now give an explicit description of the element of $\Pi(R)$, associated to a given valuation $\mu$ by the above bijection and vice versa.

Let $\mu$ be a valuation centered at $R$. The center of $\mu$ in $R$ is $\xi_{0}:=\m$. Consider the point blowing up
$\pi^{*}_{1}:X_1\longrightarrow\ Spec\ R$ along $\xi_0$. The center of $\mu$ in $X_1$ is the unique point $\xi_1\in X_1$ whose local ring $\mathcal{O}_{X_1,\xi_1}$ is dominated by $R_{\mu}$. Put $R_1:=\mathcal{O}_{X_1,\xi_1}$, and let
$\m_1:=\m_{X_1,\xi_1}$ be its maximal ideal. If $\m_1$ is principal, stop here. Otherwise, fix $x_1,y_1\in R_1$ such that
$\m_1=(x_1,y_1)$. We have $(R,\m)<(R_1,\m_1)<(R_{\mu},\m_{\mu})$. Now repeat the same procedure with
$(R,\m)$ replaced by $(R_1,\m_1)$. Continuing in this way we obtain the simple sequence $\pi^{*}(\mu)$ (finite or infinite) of local point blowings up of $Spec\ R$.\\

Now we have two cases: \\

\textbf{Case 1}: The ring $R$ does not contain an element $f$ satisfying condition (2) (b) in Lemma (\ref{inf_blowing}).

Letting $\bar R=\lim\limits_{i\to\infty}R_i$, we have $\bar{R}=R_{\mu}$.\\

Conversely, take any element $\pi^{*}\in \Pi(R)$ and let $\bar{R}$ be as in Proposition \ref{val_ring_blowing}. Let $\mu$ to be the valuation on $F$  with valuation ring $\bar{R}$. It is clear that $\pi^{*}(\mu)$ described above is equal to $\pi^{*}$.\\

\textbf{Case 2}:  The ring $R$ does contain an element $f$ satisfying condition (2) (b) in Lemma (\ref{inf_blowing}) then by Proposition (\ref{singular})  the valuation
$\nu$ is uniquely determined by $\pi^{*}$.

\end{proof}

Recall that $k$ denotes the residue field of $R$.
\begin{theorem}\label{bijection}
There is a natural order preserving bijection  between valuations of $F$ centered at $R$ and valuations of $k(x,y)$ centered at $k[x,y]_{(x,y)}$. 
\end{theorem}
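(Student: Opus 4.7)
The plan is to factor the desired bijection through the sets of blow-up sequences. By Corollary \ref{val_blow_onetoone}, the set of valuations of $F$ centered at $R$ is in natural bijection with $\Pi(R)$. The ring $A:=k[x,y]_{(x,y)}$ is itself a two-dimensional regular local ring, with maximal ideal $(x,y)$, residue field $k$, and field of fractions $k(x,y)$. Applying the same corollary to $A$, the set of valuations of $k(x,y)$ centered at $A$ is in natural bijection with $\Pi(A)$. It therefore suffices to produce a natural bijection $\Phi\colon\Pi(R)\to\Pi(A)$ such that the corresponding map on valuations preserves the order.

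The bijection $\Phi$ is built inductively. The regular system of parameters $(x,y)$ of $R$ and the generators $(x,y)$ of $A$ give canonical identifications $\m/\m^2\cong kx\oplus ky\cong (x,y)/(x,y)^2$, so the exceptional fiber of the blow-up at the maximal ideal is canonically isomorphic on both sides to $\mathbb{P}^1_k$ via the classes of the parameters. A choice of closed point $\xi$ in this $\mathbb{P}^1_k$ (of some residue field $k'\supseteq k$) together with an affine chart then produces, through identical formulas in $(x,y)$ --- typically $(x,y/x-c)$ or $(x/y-c,y)$ for a suitable $c\in k'$ --- a new pair of two-dimensional regular local rings sharing the residue field $k'$ and carrying canonically matched regular systems of parameters. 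Iterating, a sequence in $\Pi(R)$ is mapped to the sequence in $\Pi(A)$ specified by the same sequence of closed points and chart choices, and the inverse of $\Phi$ is built by the same recipe.

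For order preservation one uses a combinatorial description of the value $\mu(f)$ for $f\in R_i$ in terms of the blow-up sequence attached to $\mu$: $\mu(f)$ is a sum of multiplicities of successive strict transforms of $f$ weighted by the values of the exceptional parameters in the matched tower. A completely parallel formula computes the values of $\Phi(\mu)$ on the rings $A_i$ from the same combinatorial data, so the inequality $\mu\leq\nu$ on $R$ translates into $\Phi(\mu)\leq\Phi(\nu)$ on $A$.

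The hardest step, I expect, will be making the order-preservation argument fully rigorous when $R$ does not contain the residue field $k$ (the mixed-characteristic case): then $R$ and $A$ are not literally related by any ring homomorphism, so the comparison of values must be carried out entirely through the blow-up combinatorics rather than via any direct identification of ring elements, and one has to verify that the residue field extensions appearing at the various stages of the two towers correspond consistently.
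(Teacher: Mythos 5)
Your proposal takes essentially the same route as the paper: the paper likewise applies Proposition \ref{val_ring_blowing} (Corollary \ref{val_blow_onetoone}) to both $R$ and $k[x,y]_{(x,y)}$ and then invokes a natural identification of $\Pi(R)$ with $\Pi(k[x,y]_{(x,y)})$, asserting that these correspondences fit together into an order-preserving bijection. In fact your sketch supplies more detail than the paper's own proof --- the inductive matching of the exceptional fibers $\mathbb{P}^1$ over the successive residue fields, and the Noether-type computation of values from multiplicities of strict transforms --- since the paper leaves both the identification of the two sets of blow-up sequences and the order-preservation as ``clear''.
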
 
\begin{proof}

By Proposition \ref{val_ring_blowing}, the set of valuations of $F$ centered at $R$ is in a natural one-to-one correspondence with $\Pi(R)$. Also by Proposition \ref{val_ring_blowing} applied to $k[x,y]_{(x,y)}$, the set of valuations of $k(x,y)$ centered at $k[x,y]_{(x,y)}$ is in a natural one-to-one correspondence with $\Pi(k[x,y]_{(x,y)})$. Finally, there is a natural one-to-one correspondence between $\Pi(R)$ and  $\Pi(k[x,y]_{(x,y)})$. Clearly all those correspondences fit together to give a natural order preserving bijection between valuations of $F$ centered on $R$ and valuations of $k(x,y)$ centered on $k[x,y]_{(x,y)}$. 
\end{proof}

\section{A Complete Set of Key Polynomials}\label{com_key_pol}

Let $\tilde{\nu}\in \V$. Fix local coordinates $x$ and $y$ such that $\nu(x)=1$. Let $K=k(x)$. Let $\nu$ be the valuation of $k(x,y)$ corresponding to $\tilde{\nu}$ under the bijection of Theorem \ref{bijection}.\\

The goal of this section is to construct a set of polynomials, complete for $\nu$ (the definition is given below). This set will be our main tool for constructing the valuative tree.\\

\subsection{Definition and Basic Properties of Key Polynomials}

For each strictly positive integer $b$, we write $\partial_{b}:=\frac{\partial^{b}}{b!\partial y^{b}}$, the $b$-th formal derivative
with respect to $y$.\\

For each polynomial $P\in K[y]$, let
$\epsilon_{\nu}(P):=\max\limits _{b\in\mathbb{N}}\left\{ \frac{\nu(P)-\nu(\partial_{b}P)}{b}\right\}$,
\[
I_{\nu}(P):=\left\{ b\in\mathbb{N}\ \left|\ \frac{\nu(P)-\nu(\partial_{b}P)}{b}=\epsilon_{\nu}(P)\right.\right\} .
\]
and
$b_{\nu}(P):=\min I_{\nu}(P)$.
\begin{definition}
Let $Q$ be a monic polynomial in $K[y]$, with $\nu(Q)\geq \nu(y)$. We say that $Q$ is an \textbf{abstract key polynomial} for
$\nu$ if for each polynomial $f$ satisfying
$$
\epsilon_{\nu}(f)\geq\epsilon_{\nu}(Q),
$$
we have $\deg(f)\geq\deg(Q)$. 
\end{definition}

For the rest of the paper, we will say \textbf{key polynomial} for \textbf{abstract key polynomial}.

For a monic polynomial $Q$ in $K[y]$ and a $g\in K[y]$ we can write $g$ in a unique way as 
\begin{equation}
g=\sum\limits _{j=0}^{s}g_{j}Q^{j},\label{eq:Qexpansion}
\end{equation}
with all the $g_{j}\in K[y]$ of degree strictly less than $\deg(Q)$.
\begin{definition}
For every monic polynomial $Q$ and every polynomial $g$ in $K[y]$, we call the expression (\ref{eq:Qexpansion}) the
$Q$\textbf{-expansion} of $g$. We define $\nu_{Q}(g):=\min\limits _{0\leq j\leq s}\nu(g_{j}Q^{j})$. We call $\nu_{Q}$ the \textbf{truncation} of $\nu$ with respect to $Q$.
\end{definition}
\begin{proposition}\textbf{(Proposition 12 of \cite{DSM})} If $Q$ is a key polynomial for $\nu$ then $\nu_Q$ is a valuation.
\end{proposition}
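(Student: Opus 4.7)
The plan is to verify the two valuation axioms $(V_1)$ and $(V_2)$ for $\nu_Q$. Subadditivity is essentially formal: given $Q$-expansions $f = \sum f_i Q^i$ and $g = \sum g_j Q^j$, the $Q$-expansion of the sum is obtained by termwise addition, $f+g = \sum (f_j + g_j) Q^j$, and each coefficient still has degree less than $\deg Q$, so
\[
\nu_Q(f+g) = \min_j \nu\bigl((f_j+g_j)Q^j\bigr) \geq \min_j \min\bigl(\nu(f_j Q^j),\nu(g_jQ^j)\bigr) \geq \min(\nu_Q(f),\nu_Q(g)).
\]
Two preliminary observations I will need: (i) $\nu_Q \leq \nu$ on all of $K[y]$, and (ii) when $\deg h < \deg Q$ the $Q$-expansion of $h$ is just $h$ itself, so $\nu_Q(h) = \nu(h)$.

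For multiplicativity, one direction is relatively mechanical. Writing out $fg = \sum_k c_k Q^k$ with $c_k := \sum_{i+j=k} f_i g_j$, the $c_k$ may have degree $\geq \deg Q$; to recover the true $Q$-expansion of $fg$ I Euclidean-divide each $c_k = q_k Q + r_k$ with $\deg r_k < \deg Q$ and carry the $q_k Q^{k+1}$ up to the next level. Using $\nu_Q \leq \nu$ at each Euclidean division together with $\nu_Q(Q) = \nu(Q)$, a routine bookkeeping gives the inequality $\nu_Q(fg) \geq \nu_Q(f) + \nu_Q(g)$.

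The main obstacle — and the only place where the key polynomial hypothesis is genuinely used — is the reverse inequality $\nu_Q(fg) \leq \nu_Q(f) + \nu_Q(g)$, equivalently the statement that no cancellation occurs among the leading terms when one reduces cross products modulo $Q$. The plan is to reduce, by multilinearity and induction on the number of terms in the $Q$-expansions, to the following clean statement: \emph{if $f, g \in K[y]$ have degree $< \deg Q$ and $fg = qQ + r$ with $\deg r < \deg Q$, then $\nu(r) = \nu(f) + \nu(g)$.} I will argue by contradiction: suppose $\nu(r) > \nu(f)+\nu(g) = \nu(fg)$, so that necessarily $\nu(qQ) = \nu(fg)$ and the initial forms satisfy $\ini_\nu f\cdot \ini_\nu g = \ini_\nu q\cdot \ini_\nu Q$ in $\gr_\nu K[y]$.

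The hard part will be extracting a contradiction from this relation. The plan is to apply $\partial_{b}$ with $b = b_\nu(Q)$ to the identity $fg - qQ = r$ and exploit the Leibniz rule together with the defining equality $\nu(\partial_{b}Q) = \nu(Q) - b\,\epsilon_\nu(Q)$: comparing values of the two sides, one of the summands $(\partial_i f)(\partial_j g)$ or $(\partial_i q)(\partial_j Q)$ will be forced to attain a sufficiently small value to produce a polynomial $h$ of degree $< \deg Q$ with $\epsilon_\nu(h) \geq \epsilon_\nu(Q)$, directly contradicting the defining property of a key polynomial. This derivative/Leibniz computation — carefully tracking which index contributes the minimal value and which does not — is the delicate step; once it is in place, combining the two inequalities gives $(V_2)$ and completes the proof.
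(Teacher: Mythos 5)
You should first note that the paper itself gives no proof of this proposition: it is imported verbatim as Proposition 12 of \cite{DSM}, so your sketch has to stand on its own, and its engine --- the Leibniz/derivative computation with $b=b_{\nu}(Q)$, exploiting that $\epsilon_{\nu}(h)<\epsilon_{\nu}(Q)$ for every $h$ of degree less than $\deg Q$ --- is indeed the right one. The genuine gap is in how you apportion the work between the two inequalities. The direction you call ``relatively mechanical'', $\nu_Q(fg)\ge\nu_Q(f)+\nu_Q(g)$, cannot follow from $\nu_Q\le\nu$ and $\nu_Q(Q)=\nu(Q)$ alone: both of those facts hold for an arbitrary monic $Q$, whereas the inequality fails for general monic $Q$. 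For instance, for the monomial valuation with $\nu(x)=\nu(y)=1$ and $Q=y^2+y$ (not a key polynomial), the division $y\cdot y=Q-y$ gives $\nu_Q(y^2)=\min(\nu(Q),\nu(y))=1<2=\nu_Q(y)+\nu_Q(y)$. What your bookkeeping actually needs is a lower bound on $\nu(r_k)$ and on $\nu(q_kQ)$, i.e.\ precisely the statement that Euclidean division by $Q$ does not drop values; so the key-polynomial hypothesis is used in \emph{both} directions, through the strengthened form of your ``clean statement'': for $\deg f,\deg g<\deg Q$ and $fg=qQ+r$ one must prove $\nu(qQ)>\nu(fg)$, equivalently $\nu(r)=\nu(f)+\nu(g)<\nu(qQ)$.

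Correspondingly, your proof by contradiction covers only half of the negation: you assume $\nu(r)>\nu(fg)$, which forces $\nu(qQ)=\nu(fg)$, but you omit the other case $\nu(r)=\nu(qQ)<\nu(fg)$ --- exactly the cancellation scenario that your ``mechanical'' direction was supposed to exclude, so as written the two halves of the argument lean on each other circularly. The repair is easy and preserves your idea: assume $\nu(qQ)\le\nu(fg)$ (this subsumes both bad cases, and then $\nu(r)\ge\nu(qQ)$), apply $\partial_b$ with $b=b_{\nu}(Q)$ to $qQ=fg-r$; the term $q\,\partial_bQ$ has value exactly $\nu(q)+\nu(Q)-b\epsilon_{\nu}(Q)$ and strictly dominates all other Leibniz terms of $\partial_b(qQ)$ because $\epsilon_{\nu}(q)<\epsilon_{\nu}(Q)$, while every term of $\sum_{i+j=b}\partial_if\,\partial_jg-\partial_br$ has value strictly greater than $\nu(fg)-b\epsilon_{\nu}(Q)\ge\nu(qQ)-b\epsilon_{\nu}(Q)$ because $\epsilon_{\nu}(f),\epsilon_{\nu}(g),\epsilon_{\nu}(r)<\epsilon_{\nu}(Q)$; comparing values gives the contradiction. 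With $\nu(r)=\nu(fg)<\nu(qQ)$ in hand your carrying argument does give $\ge$, but for $\le$ you still need one further device that ``multilinearity and induction on the number of terms'' does not supply: choose $i_0$ (resp.\ $j_0$) extremal, say maximal, among the indices with $\nu(f_{i_0}Q^{i_0})=\nu_Q(f)$ (resp.\ $\nu(g_{j_0}Q^{j_0})=\nu_Q(g)$), so that in the coefficient of $Q^{i_0+j_0}$ exactly one remainder attains the minimal value, all other cross terms are strictly larger, and the carried quotient from level $i_0+j_0-1$ is strictly larger by the lemma; without that choice, cancellation among cross terms of equal value is not excluded.
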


The following proposition is a direct consequence of Proposition 19 \cite{DSM} that states that each key polynomial for $\nu$ is
$\nu$-irreducible.
\begin{proposition}\label{irreducible} If $Q$ is a key polynomial for $\nu$ then $Q$ is irreducible.
\end{proposition}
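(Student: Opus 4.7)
The plan is to argue by contradiction. Suppose $Q = Q_1 Q_2$ is a non-trivial factorization in $K[y]$, i.e., $\deg Q_i \ge 1$ for both factors, so in particular $\deg Q_i < \deg Q$. The goal is to show that one of $Q_1, Q_2$ already satisfies $\epsilon_\nu(Q_i) \ge \epsilon_\nu(Q)$, which directly contradicts the defining property of a key polynomial since both factors have degree strictly less than $\deg Q$.

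The main tool will be the Leibniz formula for the Hasse--Schmidt derivations,
$$\partial_b Q \;=\; \sum_{b_1 + b_2 = b} \partial_{b_1} Q_1 \cdot \partial_{b_2} Q_2,$$
applied at the specific value $b = b_\nu(Q) \ge 1$, which is characterized by the equality $\nu(\partial_b Q) = \nu(Q) - b\,\epsilon_\nu(Q)$. Assuming for contradiction that $\epsilon_\nu(Q_i) < \epsilon_\nu(Q)$ for both $i = 1, 2$, I would invoke the defining inequality $\nu(Q_i) - \nu(\partial_{b_i} Q_i) \le b_i\,\epsilon_\nu(Q_i)$ for each $i$ and add the two estimates. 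Using $\nu(Q) = \nu(Q_1) + \nu(Q_2)$, this produces
$$\nu\bigl(\partial_{b_1} Q_1 \cdot \partial_{b_2} Q_2\bigr) \;\ge\; \nu(Q) - \bigl(b_1\,\epsilon_\nu(Q_1) + b_2\,\epsilon_\nu(Q_2)\bigr) \;>\; \nu(Q) - b\,\epsilon_\nu(Q)$$
for every decomposition $b = b_1 + b_2$. The ultrametric inequality then forces $\nu(\partial_b Q) > \nu(Q) - b\,\epsilon_\nu(Q)$, contradicting the choice of $b$. Hence $\epsilon_\nu(Q_i) \ge \epsilon_\nu(Q)$ for some $i \in \{1,2\}$, and since $\deg Q_i < \deg Q$, the key-polynomial property is violated.

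The main obstacle, such as it is, is purely a matter of bookkeeping in the boundary cases of the Leibniz sum. When $b_i = 0$ one has $\partial_{b_i} Q_i = Q_i$, so the quotient $(\nu(Q_i) - \nu(\partial_{b_i} Q_i))/b_i$ is a formal $0/0$; however, the numerator itself vanishes, so the corresponding summand in the estimate above contributes $0$, which is harmless. When $b_i > \deg Q_i$ one has $\partial_{b_i} Q_i = 0$, so that summand carries infinite valuation and the inequality holds trivially. Both boundary situations must be treated explicitly before asserting the strict inequality uniformly in $(b_1, b_2)$, after which the rest of the argument goes through as described.
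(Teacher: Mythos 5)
Your proof is correct, but it is not the route taken in the paper: the paper does not argue from the definition at all --- it simply invokes Proposition 19 of \cite{DSM}, which asserts that every key polynomial is $\nu$-irreducible, and records irreducibility of $Q$ as a direct consequence of that external result. Your argument is instead a self-contained proof from the definition of the invariant $\epsilon_\nu$: via the Leibniz rule for the Hasse derivatives $\partial_b$ you in effect establish the inequality $\epsilon_\nu(Q_1Q_2)\le\max\{\epsilon_\nu(Q_1),\epsilon_\nu(Q_2)\}$, so a nontrivial factorization $Q=Q_1Q_2$ would produce a factor of strictly smaller degree with $\epsilon_\nu(Q_i)\ge\epsilon_\nu(Q)$, contradicting the defining property of an abstract key polynomial; your explicit treatment of the boundary terms $b_i=0$ (where the numerator vanishes) and $\partial_{b_i}Q_i=0$ (infinite value) is exactly what makes the strict estimate uniform over all decompositions $b=b_1+b_2$, and it is handled correctly. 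The only case you pass over silently is $\epsilon_\nu(Q)=\infty$, which happens precisely when $\nu(Q)=\infty$; there the identity $\nu(\partial_b Q)=\nu(Q)-b\,\epsilon_\nu(Q)$ is meaningless, but then $\nu(Q_i)=\infty$ for some factor, hence $\epsilon_\nu(Q_i)=\infty\ge\epsilon_\nu(Q)$ with $\deg_y Q_i<\deg_y Q$, and the same contradiction appears at once, so this is a one-line patch rather than a genuine gap. Comparing the two approaches: yours buys self-containedness and uses nothing beyond the definition of $\epsilon_\nu$ and the ultrametric inequality, essentially reproducing the elementary core of the argument in \cite{DSM}; the paper's citation appeals to the stronger statement of $\nu$-irreducibility (which controls divisibility at the graded level and is of independent use), but for the bare statement of Proposition \ref{irreducible} your computation suffices.
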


\begin{proposition}\label{linear_kp} Every monic linear polynomial $Q$ in $K[y]$ is a key polynomial for $\nu$.
\end{proposition}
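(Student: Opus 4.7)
The approach is to compute $\epsilon_\nu(Q)$ for a monic linear $Q = y - c$ explicitly, show it equals $\nu(Q)$, and then verify the defining inequality by ruling out the only smaller-degree possibility for $f$, namely a nonzero constant in $K$.

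First I would compute $\epsilon_\nu(Q)$ directly. Since $\partial_1 Q = 1$ and $\partial_b Q = 0$ for every $b \ge 2$, only the ratio for $b = 1$ contributes to the maximum in the definition of $\epsilon_\nu(Q)$, so
$$\epsilon_\nu(Q) = \nu(Q) - \nu(1) = \nu(Q).$$
The precondition $\nu(Q) \ge \nu(y)$ appearing in the definition of a key polynomial is assumed to hold; equivalently, one picks $c \in K$ so that $\nu(c) \ge \nu(y)$, which is the only case in which the statement is nontrivial (otherwise $Q$ is excluded a priori from the class of key polynomial candidates).

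Next I would take an arbitrary $f \in K[y]$ with $\epsilon_\nu(f) \ge \epsilon_\nu(Q) = \nu(Q)$ and show that $\deg f \ge 1 = \deg Q$. Suppose for contradiction that $f$ is a nonzero constant in $K$. Then $\partial_b f = 0$ for every $b \ge 1$; under the natural convention $\nu(0) = \infty$, every defining ratio $\frac{\nu(f) - \nu(\partial_b f)}{b}$ equals $-\infty$, so $\epsilon_\nu(f) = -\infty$. This contradicts $\epsilon_\nu(f) \ge \nu(Q) \ge \nu(y) > 0$. Hence $\deg f \ge 1$, and $Q$ satisfies the definition of a key polynomial.

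The only delicate point is the convention for $\epsilon_\nu$ on polynomials whose nontrivial $y$-derivatives all vanish. I would clarify this convention at the outset, but it is a bookkeeping matter rather than a substantive obstacle: the content of the argument is simply that $\epsilon_\nu$ of a monic linear polynomial agrees with its $\nu$-value (because the first formal derivative is $1$), while any constant (having no nontrivial $y$-derivative at all) cannot match this, so the degree bound in the key polynomial condition is automatic.
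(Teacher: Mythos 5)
Your proof is correct and follows essentially the same route as the paper's one-line argument: $\epsilon_\nu(Q)=\nu(Q)$ because $\partial_1Q=1$ and higher derivatives vanish, while any constant $c\in K$ has $\epsilon_\nu(c)=-\infty$, so no polynomial of degree $<1$ can satisfy $\epsilon_\nu(f)\geq\epsilon_\nu(Q)$. Your additional remarks on the convention for $\epsilon_\nu$ of constants and on the precondition $\nu(Q)\geq\nu(y)$ are just explicit bookkeeping of points the paper leaves implicit, not a different method.
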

\begin{proof}
For any monic linear polynomial $Q\in K[y]$ and for any $c\in K$, we have 
$$\epsilon_{\nu}(Q)=\nu(Q)>-\infty=\epsilon_{\nu}(c).$$ 
\end{proof}

The first part of the next proposition is Theorem 27 of \cite{DSM} and the second part is obvious.\\

\begin{proposition}\label{truncations_value_groups} (1) Let $\mu$ be a valuation of $K(y)$ such that $\mu<\nu$, and let $Q$ be a monic polynomial of minimal degree in $y$ such that $\mu(Q)<\nu(Q)$. Then $Q$ is a key polynomial for $\nu$.\\
\smallskip

(2) Furthermore, we have $\mu< \nu_Q\leq \nu$ and the value group $\Gamma_Q$ of $\nu_Q$ is equal to $\Gamma_\mu+\beta \Z$ where $\Gamma_\mu$ is the value group of $\mu$ and $\beta=\nu(Q)$.
\end{proposition}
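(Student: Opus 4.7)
The first part is Theorem 27 of \cite{DSM}, which I would cite directly. For the second part, the inequality $\nu_Q \leq \nu$ is immediate from $(V_1)$: for any $g = \sum_j g_j Q^j$ written in its $Q$-expansion, $\nu(g) \geq \min_j \nu(g_j Q^j) = \nu_Q(g)$. The strict inequality $\mu \neq \nu_Q$ is equally immediate from $\mu(Q) < \nu(Q) = \nu_Q(Q)$. So the real content of part (2) is the inequality $\mu \leq \nu_Q$ and the value-group identification.

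To prove $\mu \leq \nu_Q$, I would proceed by induction on $\deg g$, after first recording the following base-case observation: by the minimality of $\deg Q$ combined with $\mu \leq \nu$, one has $\mu(h) = \nu(h) = \nu_Q(h)$ for every polynomial $h$ of degree less than $\deg Q$, since otherwise $h$ would be a smaller-degree witness to $\mu < \nu$. This settles the base case. For the inductive step with $\deg g \geq \deg Q$, write $g = Qq + r$ with $\deg r < \deg Q$ and $\deg q < \deg g$. The inductive hypothesis gives $\mu(q) \leq \nu_Q(q)$, and combined with $\mu(Q) < \nu(Q)$ this yields the strict inequality $\mu(Qq) < \nu_Q(Qq)$, while the base case yields $\mu(r) = \nu_Q(r)$. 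Using that $\nu_Q$ is a valuation by Proposition 12 of \cite{DSM} (applicable via Part 1), the cases $\mu(Qq) \neq \mu(r)$ are handled directly by the valuation property, as is the subcase $\mu(Qq) = \mu(r)$ without $\mu$-cancellation in the sum.

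The only delicate subcase, and therefore the main obstacle, is the ``cancellation'' one: $\mu(Qq) = \mu(r)$ yet $\mu(Qq + r) > \mu(Qq)$. To rule it out I would invoke the hypothesis $\mu \leq \nu$ a second time. In this configuration $\nu(r) = \mu(r)$ by the base case, while $\nu(Qq) \geq \nu_Q(Qq) > \nu_Q(r) = \nu(r)$, the middle strict inequality following from $\nu_Q(Qq) > \mu(Qq) = \mu(r) = \nu_Q(r)$. Since $\nu$ is a valuation and $\nu(Qq) > \nu(r)$, we get $\nu(g) = \nu(Qq + r) = \nu(r) = \mu(r) < \mu(g)$, contradicting $\mu \leq \nu$. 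This closes the induction.

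For the value-group identification, the $Q$-expansion formula combined with the base-case equality $\mu(g_j) = \nu(g_j)$ gives $\nu_Q(g) = \min_j (\mu(g_j) + j\beta) \in \Gamma_\mu + \beta\Z$ for every nonzero $g \in K[y]$, hence $\Gamma_Q \subseteq \Gamma_\mu + \beta\Z$. The reverse inclusion follows by observing that $\beta = \nu_Q(Q) \in \Gamma_Q$ and that $\nu_Q$ agrees with $\mu$ on polynomials of degree less than $\deg Q$, which supplies enough generators (those of $\mu|_{K[y]_{<\deg Q}}$ together with $\beta$) to realize $\Gamma_\mu + \beta\Z$ inside $\Gamma_Q$ after passing to $\nu_Q(K(y)^*)$.
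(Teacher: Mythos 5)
Your handling of part (1) coincides with the paper's, which likewise just cites Theorem 27 of \cite{DSM}; and since the paper dismisses part (2) as obvious, your induction is the only written argument for the inequalities $\mu<\nu_Q\leq\nu$, and it is correct. In particular, ruling out the cancellation subcase by noting that $\nu(qQ)\geq\nu_Q(qQ)>\nu_Q(r)=\nu(r)$ forces $\nu(g)=\nu(r)=\mu(r)<\mu(g)$, contradicting $\mu\leq\nu$, is exactly the right use of the hypothesis; the base case is fine granted the tacit rescaling of a non-monic witness to a monic one, which uses that $\mu$ and $\nu$ have the same restriction to $K$ (an assumption the paper itself uses silently, e.g.\ in the proof of Proposition \ref{le_me}).

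The value-group identification, however, has a genuine gap at the reverse inclusion. From the $Q$-expansion one only gets that $\Gamma_Q$ is the group generated by $\{\mu(h)\mid \deg_yh<\deg_yQ\}$ together with $\beta$, and your parenthetical claim that these elements generate all of $\Gamma_\mu$ modulo $\beta\Z$ is neither proved nor deducible from the hypotheses of the proposition. Concretely, take $k=\Q$, let $\mu_1$ be the monomial valuation with $\mu_1(x)=\mu_1(y)=1$, put $\phi=y^2-2x^2$, and let $\mu$ and $\nu$ be the augmented valuations $[\mu_1,\ \phi,\ \frac{5}{2}]$ and $[\mu_1,\ \phi,\ \frac{7}{2}]$, that is, $\mu\bigl(\sum_jg_j\phi^j\bigr)=\min_j\bigl(\mu_1(g_j)+\frac{5}{2}j\bigr)$ and similarly with $\frac{7}{2}$; these are valuations because the $\mu_1$-initial form of $\phi$ is irreducible ($2$ is not a square in $\Q$). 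Then $\mu<\nu$, $\Gamma_\mu=\frac{1}{2}\Z$, the minimal degree of a monic polynomial on which $\mu$ and $\nu$ differ is $2$, and $Q=\phi+x^3$ is such a polynomial with $\beta=\nu(Q)=3$ (moreover $3$ is the smallest value $\nu(Q')$ over all admissible $Q'$, so this is not an artifact of a perverse choice of $Q$); yet $\nu_Q$ takes only integer values on $K[y]$, so $\Gamma_Q=\Z\subsetneq\frac{1}{2}\Z=\Gamma_\mu+\beta\Z$, the generator $\mu(\phi)=\frac{5}{2}$ of $\Gamma_\mu$ being the value of a polynomial of degree equal to $\deg_yQ$ rather than smaller. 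So the asserted equality cannot be obtained, as you attempt, from the stated hypotheses alone: it does hold for the truncations $\nu_i$ produced in \S\ref{construction_complete} (there the needed fact that $\Gamma_\mu$ is generated by values of polynomials of degree less than $\deg_yQ$ can be extracted from Propositions \ref{le_me} and \ref{maximalelement}), and in the invocation just after Proposition \ref{le_me}, where $\mu$ is general, only the inclusion $\Gamma_Q\subseteq\Gamma_\mu+\beta\Z$ --- which you do prove correctly --- is needed, namely to conclude that $\Gamma_{Q+}$ is well ordered. Your proof should either add the hypothesis that $\Gamma_\mu$ is generated by $\mu$-values of polynomials of degree less than $\deg_yQ$, or restrict the equality to the setting in which the paper actually applies it.
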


Let $\mu$ be a valuation of $K(y)$ such that $\mu<\nu$, and let $Q$ be a monic polynomial of minimal degree in $y$ such that $\mu(Q)<\nu(Q)$. Let $\beta\in\bar{\R}$, with $\mu(Q)<\beta$.\\

We define a new valuation $\mu'$ in the following way:

For a polynomial $g\in K[y]$, let $g=\sum\limits _{j=0}^{s}g_{j}Q^{j}$ be the $Q$-expansion of $g$. Put
$$
\mu'(g):=\min\limits _{0\leq j\leq s}\{\mu(g_{j})+j\beta\}.
$$
We call $\mu'$ the \textbf{augmented valuation} constructed from $\mu$, $Q$, and $\beta$, and we denote it by
$[\mu,\ Q,\ \beta]$.\\

For further details on augmented valuations, see \cite{V1}.

\subsection{A Complete Set of Key Polynomials: the Definition}

Let $\beta_0=\nu(x)=1$ and $\beta_1=\nu(y)$. Let $\Gamma_{\nu}=\nu(F)\subset\bar{\R}$ denote the augmented value group of $\nu$. \\

For an element $\beta\in\Gamma_{\nu}$, let $\mathbf{P}_{\beta}$ be as defined at thte end of \S2, but with $F$ replaced by
$K(y)$:
$$
\mathbf{P}_{\beta}=\{f\in K(y)\ |\ \nu(f)\ge\beta\}.
$$
\begin{definition}{\label{de9.1}} A {\bf complete set of key polynomials} for $\nu$ is a set
$$
\mathbf{Q}=\{Q_i\}_{i\in I}
$$
where $I$ is a well ordered set, each $Q_i$ is a key polynomial in $K[y]$ for $\nu$, and for each $\beta\in\g_{\nu}$ the additive group $\mathbf{P}_\beta\cap K[y]$ is generated by products of the form $a\prod\limits_{j=1}^sQ_{i_j}^{\gamma_j}$, $a\in K$, such that
$\sum\limits_{j=1}^s\gamma_j\nu(Q_{i_j})+\nu(a)\ge\beta$.
\end{definition}
In \cite{HOS} it is proved that every valuation $\nu$ admits a complete set $\mathbf{Q}=\{Q_i\}_{i\in I}$ of key polynomials. 

\begin{remark}
If $\mathbf{Q}=\{Q_i\}_{i\in I}$ is a complete set of key polynomials for $\nu$, we will always assume that the well ordering of $I$ has the following property: for $i<j$ in $I$, we have $\nu(Q_i)<\nu(Q_j)$.
\end{remark}

\subsection{Basic Structure}\label{basic_structure}

Let $\mu$ be a valuation of $K(y)$  with $\mu<\nu$. Suppose that the subset $\Gamma_{\mu+}$ of positive values of
$\Gamma_{\mu}=\mu(K(y))$ is a well ordered set (with the standard order relation in $\bar{\R}$). Note that this assumption is equivalent to saying that $\Gamma_\mu\cong\mathbb Z$.\\

We will use the following notation:

\begin{enumerate}
\item Let $d_{\mu}(\nu)$ be the minimal degree of a monic polynomial $f$ in $K[y]$ satisfying $\mu(f)<\nu(f)$.
\item Put $\Phi_{\mu}(\nu):=\left\{Q\in K[y]\mid Q\ is\ monic,\ \deg_{y}(Q)=d_{\mu}(\nu),\ \mu(Q)<\nu(Q)\right\}$.
\item Put $\Psi_{\mu}(\nu):=\nu\left(\Phi_{\mu}(\nu)\right)=\left\{\nu(Q)\in K[y]\mid Q\ is\ monic,\ \deg_{y}(Q)=d_{\mu}(\nu),\ \mu(Q)<\nu(Q)\right\}$.
\end{enumerate}

\begin{proposition}\label{le_me}
The set $\Psi_{\mu}(\nu)$ is contained in $\Gamma_{\mu+}$ or in $\Gamma_{\mu+}\cup \{\alpha\}$, where $\alpha$ is a maximal element of $\Psi_{\mu}(\nu)$ if it exists. Moreover, $\Psi_{\mu}(\nu)$ is bounded below by $d\mu(y)\geq 0$.
\end{proposition}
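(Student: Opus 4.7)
The proposition has two parts: (i) $\Psi_\mu(\nu)\subset\Gamma_{\mu+}$ with possibly one exception, a maximum element $\alpha$, and (ii) $\Psi_\mu(\nu)$ is bounded below by $d\mu(y)\ge 0$, where $d:=d_\mu(\nu)$. Both follow from minimality of $d$, supplemented by the auxiliary fact that \emph{$\mu$ and $\nu$ agree on every (not only monic) polynomial in $K[y]$ of degree strictly less than $d$}. The main obstacle is precisely this auxiliary fact: the definition of $d$ controls only monic polynomials, so we additionally need $\mu|_K=\nu|_K$. That equality follows from the paper's normalisation: $\nu(x)=1$, both $\mu$ and $\nu$ satisfy $\mu(\m)=\nu(\m)=1$, and $\mu\le\nu$ together with $x\in\m$ force $\mu(x)=\nu(x)=1$, so both restrictions to $K=k(x)$ coincide with the normalised $x$-adic valuation.

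\textbf{Structural part.} I would pick $\beta_1<\beta_2$ in $\Psi_\mu(\nu)$ with witnesses $Q_1,Q_2\in\Phi_\mu(\nu)$ monic of degree $d$. The leading monomials $y^d$ cancel in $Q_1-Q_2$, which then has degree $<d$; since $\beta_1<\beta_2$, one has $\nu(Q_1-Q_2)=\min\{\beta_1,\beta_2\}=\beta_1$. By the auxiliary fact, $\mu(Q_1-Q_2)=\nu(Q_1-Q_2)=\beta_1$, so $\beta_1\in\Gamma_\mu$. Applied to every $\beta\in\Psi_\mu(\nu)$ that is not the supremum, this yields $\Psi_\mu(\nu)\setminus\{\alpha\}\subset\Gamma_\mu$, where $\alpha$ denotes the maximum when attained; strict positivity will come from the bound in (ii).

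\textbf{Lower bound.} For $Q\in\Phi_\mu(\nu)$, write $Q=y^d+g$ with $\deg g<d$. I claim $\mu(Q)\ge d\mu(y)$. If instead $\mu(g)<d\mu(y)=\mu(y^d)$, then $\mu(Q)=\mu(g)$; the auxiliary fact gives $\mu(g)=\nu(g)$, and $\nu(y^d)=d\nu(y)\ge d\mu(y)>\nu(g)$ forces $\nu(Q)=\nu(g)=\mu(Q)$, contradicting $Q\in\Phi_\mu(\nu)$. Hence $\mu(Q)\ge d\mu(y)$ and $\nu(Q)>\mu(Q)\ge d\mu(y)$. Since $y\in k[x,y]_{(x,y)}$ and $\mu$ is centred there, $\mu(y)\ge 0$, so $d\mu(y)\ge 0$. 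Combined with the structural part, this places $\Psi_\mu(\nu)\setminus\{\alpha\}$ in $\Gamma_{\mu+}$.
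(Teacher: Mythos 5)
Your proof is correct and follows essentially the same route as the paper's: the lower bound is obtained by writing $Q=y^d+g$ and deriving the contradiction $\mu(Q)=\nu(Q)$, and membership of the non-maximal values of $\Psi_\mu(\nu)$ in $\Gamma_\mu$ is obtained by subtracting two witnesses, whose difference has degree $<d$. The only difference is that you make explicit two points the paper leaves implicit --- that $\mu$ and $\nu$ agree on all (not only monic) polynomials of degree $<d$ because they coincide on $K$, and that $\nu(Q)>\mu(Q)\ge d\mu(y)\ge 0$ yields the strict positivity needed for $\Gamma_{\mu+}$ --- which is a refinement of the same argument rather than a different approach.
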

\begin{proof}
Let $d:=d_{\mu}(\nu)$. First we will show that $\Psi_{\mu}(\nu)$ is bounded below by $d\cdot\mu(y)$.\\ 
Let $\beta\in \Psi_{\mu}(\nu)$ and choose $Q\in \Phi_{\mu}(\nu)$ such that $\nu(Q)=\beta$. \\
Suppose that $\beta<d\mu(y)$ and write $Q=y^d+g$, with $g\in K[y]$, $\deg_{y}(g)<d$.\\
Since $\mu\left(y^d\right)=d\mu(y)>\beta=\nu(Q)>\mu(Q)$, we have $\mu(Q)=\mu(g)$.\\
Since $\nu(y^d)\geq\mu(y^d)=d\mu(y)>\beta=\nu(Q)$, we have $\nu(Q)=\nu(g)$,\\
but $\nu(g)=\mu(g)$ by definition of $d$, therefore $\nu(Q)=\mu(Q)$, which is a contradiction.\\

Now we will prove that any element $\beta\in\Psi_{\mu}(\nu)$ which is not a maximal element must be in $\Gamma_{\mu}$.\\

Suppose that $\beta$ and $\alpha$ are two elements of $\Psi_{\mu}(\nu)$ such that $\beta<\alpha$.\\
Choose $Q$ and $Q'$ in $\Phi_{\mu}(\nu)$ such that $\nu(Q)=\beta$ and $\nu(Q')=\alpha$.\\
Write $Q'=Q+z$ with $z\in K[y]$, $\deg_{y}(z)<d$.\\
Since $\alpha>\beta$, we have $\nu(Q)=\nu(z)$. But $\nu(z)=\mu(z)$ by definition of $d_{\mu}(\nu)$. Hence
$\beta\in\Gamma_{\mu}$.\\
 
\end{proof}

From Proposition \ref{le_me} we see that $\Psi_{\mu}(\nu)$ is well ordered.\\
We will denote by $\beta_{\mu}(\nu)$ the smallest element of $\Psi_{\mu}(\nu)$.\\

Choose $Q\in \Phi_{\mu}(\nu)$ such that $\nu(Q)=\beta_{\mu}(\nu)$. By Proposition \ref{truncations_value_groups}, $Q$ is a key polynomial for $\nu$, the truncation $\nu_Q$ is a valuation with $\mu<\nu_Q\leq \nu$, and the augmented value group
$\Gamma_Q$ of $\nu_Q$ is $\Gamma_{\mu}+\beta_{\mu}(\nu)\Z$. Hence the set $\Gamma_{Q+}$ of positive values of
$\Gamma_Q$ is a well ordered set.\\

If $\nu_Q<\nu$, we can repeat the same process as above with $\mu$ replaced by $\nu_Q$.\\

Moreover, the valuation $\nu_Q$ does not depend on the choice of $Q$, as we will prove in the following proposition.\\

\begin{proposition}\label{unique_val} With the notation as above, if $Q'$ is another polynomial in $\Phi_{\mu}(\nu)$ such that
$\nu(Q')=\beta_{\mu}(\nu)$ then $\nu_Q=\nu_{Q'}$.
\end{proposition}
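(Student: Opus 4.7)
The plan is to show the two truncations agree by exploiting the valuation property plus the basic fact that on polynomials of degree less than $d:=d_\mu(\nu)$ the valuations $\mu$, $\nu_Q$, $\nu_{Q'}$ and $\nu$ all coincide.

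First I would record the following. Both $Q$ and $Q'$ are key polynomials for $\nu$ by Proposition \ref{truncations_value_groups}, so $\nu_Q$ and $\nu_{Q'}$ are valuations and satisfy $\mu < \nu_Q \le \nu$ and $\mu < \nu_{Q'} \le \nu$. Writing $Q' = Q + z$ with $\deg_y(z) < d$, the argument already used in the proof of Proposition \ref{le_me} gives $\mu(z) = \nu(z)$; moreover from $\nu(Q) = \nu(Q') = \beta_\mu(\nu) =: \beta$ we obtain $\nu(z) \ge \beta$. In particular, since $\nu_Q$ agrees with $\mu$ (hence with $\nu$) on polynomials of $y$-degree less than $d$, we have $\nu_Q(z) = \nu(z) \ge \beta = \nu_Q(Q)$, and the valuation property of $\nu_Q$ together with the bound $\nu_Q(Q') \le \nu(Q') = \beta$ yields $\nu_Q(Q') = \beta$. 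Symmetrically $\nu_{Q'}(Q) = \beta$.

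The main step is then the following comparison. Take any $f \in K[y]$ and write its $Q'$-expansion $f = \sum_{i=0}^s g_i (Q')^i$ with $\deg_y(g_i) < d$. By definition $\nu_{Q'}(f) = \min_i\{\mu(g_i) + i\beta\}$. Using that $\nu_Q$ is a valuation, $\nu_Q(g_i) = \mu(g_i)$ (since $\deg_y g_i < d$) and $\nu_Q(Q') = \beta$ as established above, we obtain
\[
\nu_Q(f) \ge \min_i\{\nu_Q(g_i) + i\,\nu_Q(Q')\} = \min_i\{\mu(g_i) + i\beta\} = \nu_{Q'}(f).
\]
Reversing the roles of $Q$ and $Q'$ (using the $Q$-expansion and $\nu_{Q'}(Q) = \beta$) gives the opposite inequality, so $\nu_Q(f) = \nu_{Q'}(f)$ for every $f \in K[y]$, and by multiplicativity the two valuations agree on all of $K(y)$.

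The only delicate point is the equality $\nu_Q(Q') = \beta$ (and its symmetric counterpart); everything else is a mechanical application of the valuation property combined with the defining property of $d_\mu(\nu)$. This delicate point rests on the observation $\mu(z) = \nu(z) \ge \beta$, which is exactly the computation that was carried out inside the proof of Proposition \ref{le_me}, so no new ingredient beyond what is already in place is needed.
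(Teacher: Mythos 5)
Your proof is correct, but it follows a genuinely different route from the one in the paper. The paper's proof is a minimal-counterexample argument: it takes $f$ of least degree with $\nu_Q(f)<\nu_{Q'}(f)$, uses that minimality to identify $\nu_Q$ and $\nu_{Q'}$ on everything of lower degree, shows that necessarily $\nu_Q(f)=\nu(a_sQ^s)$ for the top term of the $Q$-expansion of $f$, and then transfers that top term into the $Q'$-expansion to contradict $\nu(Q)=\nu(Q')=\beta_\mu(\nu)$. You instead first establish the identity $\nu_Q(Q')=\nu_{Q'}(Q)=\beta_\mu(\nu)$ and then compare the two truncations term by term on an arbitrary $Q'$-expansion, using that $\nu_Q$ is a valuation (Proposition 12 of \cite{DSM}, quoted in the paper) to get $\nu_Q\left(g_i(Q')^i\right)=\nu(g_i)+i\beta_\mu(\nu)\ge\nu_{Q'}(f)$; symmetry gives the reverse inequality, hence equality on $K[y]$ and then on $K(y)$ by multiplicativity. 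Your version is shorter and symmetric and dispenses with the minimal-degree bookkeeping, at the cost of invoking multiplicativity of the truncation; but the paper's own argument implicitly relies on the same ingredient when it asserts $\nu_{Q'}(a_sQ^s)=\nu(a_sQ^s)$, so you are not assuming anything beyond what is already in place. One small simplification: the ``delicate point'' $\nu_Q(Q')=\beta_\mu(\nu)$ needs neither the valuation property nor the bound $\nu_Q\le\nu$ --- since $\deg_y(Q'-Q)<d_\mu(\nu)$, the decomposition $Q'=Q+(Q'-Q)$ is literally the $Q$-expansion of $Q'$, and $\nu(Q'-Q)\ge\min\{\nu(Q),\nu(Q')\}=\beta_\mu(\nu)$, so the equality is immediate from the definition of $\nu_Q$.
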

\begin{proof}
Let $f$ be a polynomial of minimal degree such that $\nu_Q(f)\neq\nu_{Q'}(f)$ and suppose that $\nu_Q(f)<\nu_{Q'}(f)$.\\
Clearly $\deg_{y}(f)\geq d_{\mu}(\nu)$. Let $f=a_sQ^{s}+\dots+a_0$ be the $Q$-expansion of $f$ and let
$$
g=a_{s-1}Q^{s-1}+\dots+a_0.
$$
By definition of $\nu_Q$ we have $\nu(a_iQ^{i})\geq \nu_Q(f)$ for each $0\leq i\leq s$.\\

Suppose first that $\nu(a_sQ^{s})>\nu_Q(f)$. Then $\nu_Q(f)=\nu_Q(g)$.

Since $\deg_{y}(g)< \deg_{y}(f)$, we have $\nu_Q(g)=\nu_{Q'}(g)$. Therefore
$$
\nu_{Q'}(a_sQ^{s})=\nu(a_sQ^{s})>\nu_Q(g)=\nu_{Q'}(g).
$$
This implies that $\nu_{Q'}(f)=\nu_{Q'}(g)$, that leads to $\nu_{Q'}(f)=\nu_{Q}(f)$ which is a contradiction.\\
We have proved that $\nu_Q(f)=\nu(a_sQ^{s})$.\\

Write $Q'=Q+z$, with $\deg_{y}(z)<d_{\mu}(\nu)$. Then $f=a_s(Q'-z)^{s}+\dots+a_0$, and the $Q'$-expansion of $f$ involves $a_sQ'^{s}$. Therefore we have\\
$\nu(a_sQ'^{s})=\nu_{Q'}(a_sQ'^{s})\geq \nu_{Q'}(f)>\nu_{Q}(f)=\nu(a_sQ'^{s})$, which is a contradiction.
\end{proof}
\begin{lemma}\label{kp_euclidian}
Let $f\in K[y]$ be such that
\begin{equation}
\nu_Q(f)=\mu(f)\label{eq:nuQ=mu}
\end{equation}
and let $f=qQ+r$ be the Euclidean division of $f$ by $Q$ in $K[y]$. Then
$\nu_Q(f)=\nu_Q(r)<\nu_Q(qQ)$.  
\end{lemma}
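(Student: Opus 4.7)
The plan is to show $\nu_Q(qQ) > \nu_Q(r)$; once that strict inequality is in hand, the equality $\nu_Q(f)=\nu_Q(r)$ follows immediately from the $Q$-expansion of $f$. Set $d:=d_\mu(\nu)=\deg Q$. First I would record two elementary estimates. (i) $\nu_Q(r)=\mu(r)$: since $\deg r<d$, the minimality of $d$ forces $\mu(r)=\nu(r)$, and because $\mu\le\nu_Q\le\nu$ (Proposition \ref{truncations_value_groups}(2)), the truncation $\nu_Q$ must agree with both of them on $r$. (ii) $\nu_Q(qQ)>\mu(qQ)$: using $\nu_Q(Q)=\nu(Q)$ (which is immediate from the definition of $\nu_Q$ applied to the $Q$-expansion $Q=1\cdot Q$), together with $\mu\le\nu_Q$ and the hypothesis $\mu(Q)<\nu(Q)$, one computes
\[
\nu_Q(qQ)=\nu_Q(q)+\nu(Q)\ \ge\ \mu(q)+\nu(Q)\ >\ \mu(q)+\mu(Q)=\mu(qQ).
\]

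Next I would observe the crucial identity $\nu_Q(f)=\min\bigl(\nu_Q(r),\nu_Q(qQ)\bigr)$. Indeed, if $q=\sum_i b_i Q^i$ is the $Q$-expansion of $q$ (so each $\deg b_i<d$), then $f=r+\sum_i b_i Q^{i+1}$ is the $Q$-expansion of $f$, because $\deg r<d$ as well; applying the definition of $\nu_Q$ to this expansion gives the claimed minimum.

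The main step is the argument, by contradiction, that $\nu_Q(qQ)>\nu_Q(r)$. Suppose on the contrary that $\nu_Q(qQ)\le\nu_Q(r)$. Then the identity above gives $\nu_Q(f)=\nu_Q(qQ)$, and combined with estimate (ii) and the hypothesis $\nu_Q(f)=\mu(f)$ we obtain $\mu(f)>\mu(qQ)$. Applying the ultrametric inequality to the identity $r=f-qQ$ with $\mu(f)>\mu(qQ)$ yields $\mu(r)=\mu(qQ)$, and then
\[
\mu(r)=\mu(qQ)<\mu(f)=\nu_Q(f)\le\nu_Q(r)=\mu(r),
\]
a contradiction. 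Hence $\nu_Q(qQ)>\nu_Q(r)$, and feeding this back into the minimum gives $\nu_Q(f)=\nu_Q(r)<\nu_Q(qQ)$, as required. I do not anticipate a genuine obstacle here: the entire argument is bookkeeping with the $Q$-expansion, the ultrametric inequality, and the inequality $\mu(Q)<\nu(Q)$. The only point that requires a sentence of justification is that the concatenation of the $Q$-expansion of $q$ (shifted by $Q$) with $r$ is really the $Q$-expansion of $f$, which is what licenses writing $\nu_Q(f)$ as the minimum of $\nu_Q(r)$ and $\nu_Q(qQ)$.
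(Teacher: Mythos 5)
Your proof is correct and follows essentially the same route as the paper's: both argue by contradiction from the negation of the strict inequality $\nu_Q(r)<\nu_Q(qQ)$, using the $Q$-expansion identity $\nu_Q(f)=\min\{\nu_Q(r),\nu_Q(qQ)\}$, the equalities $\nu_Q(r)=\mu(r)$, the strict inequality $\nu_Q(qQ)>\mu(qQ)$ coming from $\mu(Q)<\nu(Q)$, and the ultrametric inequality applied to $f=qQ+r$. The only difference is cosmetic (you reach the contradiction as $\mu(r)<\mu(r)$ whereas the paper reaches $\nu_Q(qQ)>\nu_Q(qQ)$), and you spell out a couple of steps the paper leaves implicit.
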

\begin{proof} By definition of $\nu_Q$, we have
\begin{equation}
\nu_Q(qQ)\geq\nu_Q(f).\label{eq:qQgef}
\end{equation}
Suppose we have equality, aiming for contradiction:
\begin{equation}
\nu_Q(qQ)=\nu_Q(f).\label{eq:qQ=f}
\end{equation}
Then
\begin{equation}
\nu_Q(r)\ge\nu_Q(f).\label{eq:nuQrgenuQf}
\end{equation}
By definition of $\nu_Q$, we have
\begin{equation}
\mu(r)=\nu_Q(r).\label{eq:mur=nuQr}
\end{equation}
Combining (\ref{eq:nuQ=mu}), (\ref{eq:nuQrgenuQf}) and (\ref{eq:mur=nuQr}), we obtain $\mu(f)\le\mu(r)$. Then
\begin{equation}
\mu(f)\le\mu(qQ).\label{eq:muflemuqQ}
\end{equation}
We have
\begin{equation}
\nu_Q(qQ)>\mu(qQ).\label{eq:nuQ>mu}
\end{equation}
Combining (\ref{eq:nuQ=mu}), (\ref{eq:qQ=f}), (\ref{eq:muflemuqQ}) and (\ref{eq:nuQ>mu}), we get
$\nu_{Q}(qQ)>\mu(qQ)\ge\mu(f)=\nu_Q(f)=\nu_{Q}(qQ)$, which is a contradiction.
\end{proof}
\begin{proposition}\label{Form_of_KP}(Theorem 9.4 \cite{M2},Theorem 1.11 \cite{V1}) Let $Q'$ be a monic polynomial of minimal degree among those satisfying
$$
\nu_Q(Q')<\nu(Q').
$$
Then the $Q$-expansion of $Q'$ is given by $Q'=Q^s+a_{s-1}Q^{s-1}\dots+a_0$ with
$$
\nu_Q(Q')=s\beta_{\mu}(\nu)=\nu(a_0).
$$
\end{proposition}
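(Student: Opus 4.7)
The plan is to analyze the $Q$-expansion $Q'=Q^{s}+a_{s-1}Q^{s-1}+\dots+a_0$ directly, using only the hypothesis $\nu_Q(Q')<\nu(Q')$ together with the minimality of $\deg Q'$. Write $\beta:=\beta_{\mu}(\nu)=\nu(Q)$. The preliminary observation I would record is that for every polynomial $f\in K[y]$ with $\deg f<\deg Q'$ one has $\nu_Q(f)=\nu(f)$: for monic $f$ this is forced by the minimality of $\deg Q'$, and the general case follows because $\nu_Q|_K=\nu|_K$ (directly from the definition of $\nu_Q$) and $\nu_Q$ is a valuation. In particular every coefficient $a_j$ satisfies $\nu_Q(a_j)=\nu(a_j)$.

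Next I would prove $\nu_Q(Q')=s\beta$. Since $\nu_Q\geq\mu$, the strict inequality $\nu_Q(Q')<\nu(Q')$ forces $\mu(Q')<\nu(Q')$, hence $\deg Q'\geq d_{\mu}(\nu)=\deg Q$, so $s\geq 1$. Writing $Q'=Q^{s}+(Q'-Q^{s})$, the preliminary observation gives $\nu(Q'-Q^{s})=\nu_Q(Q'-Q^{s})$, since $\deg(Q'-Q^{s})<\deg Q'$. If the minimum $\nu_Q(Q')=\min_{0\le j\le s}\nu(a_jQ^{j})$ were not attained at $j=s$, then $\nu_Q(Q')<s\beta=\nu(Q^{s})$ and $\nu_Q(Q'-Q^{s})=\nu_Q(Q')$; thus $\nu(Q^{s})>\nu(Q'-Q^{s})$, the ultrametric inequality becomes an equality, and $\nu(Q')=\nu(Q'-Q^{s})=\nu_Q(Q')$, a contradiction. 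Therefore $\nu_Q(Q')=s\beta$, and consequently $\nu(a_j)\geq(s-j)\beta$ for every $j$.

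It remains to show $\nu(a_0)=s\beta$; only the inequality $\nu(a_0)\geq s\beta$ is in hand. Suppose for contradiction that $\nu(a_0)>s\beta$, and set $g:=Q^{s-1}+a_{s-1}Q^{s-2}+\dots+a_1\in K[y]$, so that $Q'=Qg+a_0$ and $\deg g<\deg Q'$. Applying the preliminary observation to $g$ gives $\nu(g)=\nu_Q(g)=\min_{1\le l\le s}(\nu(a_l)+(l-1)\beta)$ (with $a_s:=1$); each term is at least $(s-1)\beta$ by the inequalities from the previous step, and the term $l=s$ attains $(s-1)\beta$, so $\nu(g)=(s-1)\beta$. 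Hence $\nu(Qg)=s\beta<\nu(a_0)$, which forces $\nu(Q')=\nu(Qg)=s\beta=\nu_Q(Q')$, contradicting the hypothesis. Thus $\nu(a_0)=s\beta$.

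The main obstacle is this last step. The inequality $\nu_Q(Q')<\nu(Q')$ by itself only says that the initial forms of the terms $a_jQ^{j}$ attaining the minimum cancel in $\gr_{\nu_Q}K(y)$, which a priori pins down only a subset of indices, not any individual $j$. Dividing off $Q$ from $Q'-a_0$ and exploiting that $\nu_Q$ and $\nu$ agree below degree $\deg Q'$ is what forces the minimum to be attained specifically at $j=0$.
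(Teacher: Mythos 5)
There is a genuine gap: you assume, from the very first line, that the $Q$-expansion of $Q'$ is $Q'=Q^{s}+a_{s-1}Q^{s-1}+\dots+a_0$, i.e.\ that its leading coefficient is $1$ (equivalently $\deg_y Q'=s\deg_y Q$). But this is part of the conclusion of the proposition, not a hypothesis: a priori the expansion is $Q'=a_sQ^s+\dots+a_0$ with $a_s$ monic of degree $\deg_y Q'-s\deg_y Q$, which could be positive. Your argument genuinely uses $a_s=1$ in several places (e.g.\ $\deg(Q'-Q^s)<\deg Q'$, the identification $\nu(Q^s)=s\beta$ as the $j=s$ term of the minimum, and the term $l=s$ contributing $(s-1)\beta$ to $\nu(g)$ in the last step), and nothing in your three steps rules out $\deg a_s>0$: run your steps with general $a_s$ and you only get $\nu_Q(Q')=\nu(a_s)+s\beta=\nu(a_0)$, with no control on $\deg a_s$. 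In the paper this is precisely the hardest part of the proof: one uses the irreducibility of $Q$ (Proposition \ref{irreducible}) to find $g$ with $ga_s+hQ=1$, reduces each $ga_j$ modulo $Q$ via Lemma \ref{kp_euclidian} to get remainders $r_j$ with $\nu(ga_j)=\nu(r_j)$, forms $Q''=Q^s+r_{s-1}Q^{s-1}+\dots+r_0$, shows $\nu(Q'')>\nu_Q(Q'')$, and only then invokes the minimality of $\deg_y Q'$ to force $\deg_y a_s=0$, $a_s=1$. Your closing remark identifies the location of the minimum over $j$ as the main obstacle, but that is not where the difficulty lies.

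The rest of your proposal is correct and close to the paper's own reasoning: your preliminary observation (agreement of $\nu_Q$ and $\nu$ below $\deg Q'$, extended from monic to general polynomials by scaling) is used implicitly throughout the paper's proof; your second step is essentially the paper's final paragraph showing $\nu_Q(Q')=\nu(Q^s)$; and your third step is a variant of the paper's opening step, where the Euclidean division $Q'=qQ+a_0$ together with $\nu(Q')>\nu_Q(Q')$ immediately forces $\nu(qQ)=\nu(a_0)=\nu_Q(Q')$ (note this works without knowing $a_s=1$, so you could simplify that step). What is missing is the entire argument that $a_s=1$.
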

\begin{proof}
First, let $Q'=qQ+a_0$ be the Euclidean division of $Q'$ by $Q$. We have
$$
\nu(Q')>\nu_Q(Q')=\inf\{\nu_Q(qQ),\nu(a_0)\}
$$
by definition of $\nu_Q$. Hence $\nu(qQ)=\nu(a_0)=\nu_Q(Q')$.\\

Since $\deg_y(a_s)<d_{\mu}(\nu)$ and $Q$ is irreducible in $K[y]$ by Proposition \ref{irreducible}, there exist $g$ and $h$ in
$K[y]$ with $deg_y(g)<d_{\mu}(\nu)$ and $ga_s+hQ=1$. Now $ga_s=-hQ+1$, $\nu_Q(a_s)=\mu(a_s)$ and
$\nu_Q(g)=\mu(g)$, therefore by Lemma \ref{kp_euclidian} we have $\nu(ga_s)=\nu(1)<\nu(-hQ)$.\\

Now for each $j$, $0\leq j\leq s-1$, let $ga_j=q_{j}Q+r_j$ be the Euclidean division of $ga_j$ by $Q$ in $K[y]$. Since
$\nu_Q(a_j)=\mu(a_j)$ and $\nu_Q(g)=\mu(g)$, by Lemma \ref{kp_euclidian} we have $\nu(ga_j)=\nu(r_j)<\nu(q_jQ)$.\\

Consider the polynomial $Q''=Q^{s}+r_{s-1}Q^{s-1}+\dots+r_0$.\\
We have $Q''-gQ'=(r_s-ga_s)Q^{s}+(r_{s-1}-ga_{s-1})Q^{s-1}+\dots+r_{0}-ga_{0}$, with $r_s=1$.\\
Therefore
\[
\nu_Q(Q''-gQ')\geq\inf_{0\leq j\leq s}\{\nu(r_j-ga_j)+j\beta_{\mu}(\nu)\}>\inf_{0\leq j\leq s}\{\nu(r_j)+j\beta_{\mu}(\nu)\}=\nu_Q(Q'')
\]
and $\nu_Q(Q''-gQ')>\nu_Q(Q'')=\nu_Q(gQ')$.\\

If $\nu(Q'')=\nu_Q(Q'')$ then $\nu(Q''-gQ')\geq\nu_Q(Q''-gQ')>\nu_Q(Q'')=\nu(Q'')$ and we have
$\nu(Q'')=\nu(gQ')>\nu_Q(gQ')=\nu_Q(Q'')$, which is impossible. Hence $\nu(Q'')>\nu_Q(Q'')$.\\

Since $Q'$ is chosen of minimal degree we must have $\deg_y(Q'')\geq \deg_y(Q')$, but this implies that $\deg_{y}a_s=0$ and
$a_s=1$.\\

We still have to prove that $\nu_Q(Q')=\nu(Q^{s})$. By definition of $\nu_Q$ we have $\nu(Q^{s})\geq \nu_Q(Q')$.\\

Suppose that $\nu(Q^{s})>\nu_Q(Q')$, then $\nu_Q(Q')=\nu_Q(f)$, where $f=Q'-Q^{s}$.

But since $\deg_{y}(f)< \deg_{y}(Q')$, we have $\nu_Q(f)=\nu(f)$. We obtain
$$
\nu(Q^{s})=\nu_{Q}(Q^{s})>\nu_Q(Q')=\nu_Q(f)=\nu(f).
$$
This implies that $\nu(Q')=\nu(f)$, which leads to $\nu(Q')=\nu_{Q}(Q')$, a contradiction.
\end{proof}
\begin{corollary}\label{degreesandvalues}
If $Q'\in K[y]$ is monic with $\deg_y(Q')=d_{\mu}(\nu)$ and $\nu(Q')>\nu_Q(Q')$ then $\nu_Q(Q')=\beta_{\mu}(\nu)$. 
\end{corollary}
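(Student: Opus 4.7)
The plan is to exploit the very restrictive shape of the $Q$-expansion of $Q'$. Since both $Q$ and $Q'$ are monic of the same degree $d_{\mu}(\nu)$, their leading terms cancel, so the polynomial $Q' - Q$ has degree strictly less than $d_{\mu}(\nu)$. Hence the $Q$-expansion of $Q'$ is simply
\[
Q' = 1 \cdot Q + (Q' - Q),
\]
with the constant term $Q' - Q$ having degree less than $\deg_y Q$.

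From this, the definition of $\nu_Q$ gives $\nu_Q(Q') = \min\{\nu(Q),\nu(Q'-Q)\} = \min\{\beta_{\mu}(\nu),\,\nu(Q'-Q)\}$. Note that by definition of $d_{\mu}(\nu)$, any polynomial of $y$-degree smaller than $d_{\mu}(\nu)$ is assigned the same value by $\mu$ and by $\nu$, so in particular $\nu(Q'-Q) = \mu(Q'-Q)$ lies in $\Gamma_{\mu}$.

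Now I would use the standing hypothesis $\nu(Q') > \nu_Q(Q')$. The ultrametric inequality gives $\nu(Q') \ge \min\{\nu(Q), \nu(Q'-Q)\}$, with equality whenever the two values $\nu(Q)$ and $\nu(Q'-Q)$ are distinct. Since the inequality is strict in our setting, we must have $\nu(Q) = \nu(Q'-Q)$, and therefore
\[
\nu_Q(Q') = \nu(Q) = \beta_{\mu}(\nu),
\]
as claimed. There is no real obstacle here: the result is a direct unpacking of the definition of $\nu_Q$ combined with the observation that monicity forces $Q'-Q$ into the "low-degree" range where $\mu$ and $\nu$ coincide.
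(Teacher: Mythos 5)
Your proof is correct, but it takes a more self-contained route than the paper, which simply deduces the corollary as the special case $s=1$ of Proposition \ref{Form_of_KP}. Your argument works directly from the definition of $\nu_Q$: since $Q$ and $Q'$ are monic of the same degree $d_{\mu}(\nu)$, the $Q$-expansion of $Q'$ is $Q'=Q+(Q'-Q)$ with $\deg_y(Q'-Q)<\deg_y Q$, hence $\nu_Q(Q')=\min\{\nu(Q),\nu(Q'-Q)\}$; the standing hypothesis $\nu(Q')>\nu_Q(Q')$ is incompatible with $\nu(Q)\neq\nu(Q'-Q)$ by the ultrametric equality, so $\nu_Q(Q')=\nu(Q)=\beta_{\mu}(\nu)$. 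In substance this is the opening step of the paper's proof of Proposition \ref{Form_of_KP} (where the Euclidean division $Q'=qQ+a_0$ and the strict inequality force $\nu(qQ)=\nu(a_0)=\nu_Q(Q')$), specialized to the equal-degree case in which monicity forces $q=1$. What your route buys is that none of the heavier machinery of that proposition (irreducibility of $Q$, the B\'ezout-type identity, the auxiliary polynomial $Q''$) is needed for this particular statement; what the paper's route buys is brevity, since the proposition is already proved. Two small remarks: your observation that $\nu(Q'-Q)=\mu(Q'-Q)\in\Gamma_{\mu}$ is correct (it implicitly uses that $\mu$ and $\nu$ agree on $K$, as the paper also does) but is not actually needed for the conclusion; and the degenerate case $Q'=Q$ is automatically excluded by the hypothesis, since then $\nu_Q(Q')=\nu(Q')$.
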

\begin{proof} This is a special case of Proposition \ref{Form_of_KP} when $\deg_y(Q')=d_{\mu}(\nu)$.
\end{proof}

\begin{proposition}\label{complete} Let $\{Q_i\}_{i\in I}$ be a set of key polynomials for $\nu$, with $I$  a well ordered set and
$\deg_y(Q_i)\leq \deg_y(Q_j)$ for $i<j$ in $I$. Let $\nu_i:=\nu_{Q_i}$ be the truncation associated with each $Q_i$. Then the set
$\{Q_i\}_{i\in I}$ is a complete set of key polynomials if and only if for each polynomial $f\in K[y]$ there exists an element $i\in I(\nu)$ such that $\nu(f)=\nu_{i}(f)$ and $\deg_yQ_i\le\deg_yf$.
\end{proposition}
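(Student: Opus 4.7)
The proposition is an iff-characterization of completeness, and I would prove each direction separately.

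For the reverse direction, I would assume the truncation condition and prove completeness by strong induction on $\deg_y f$. When $\deg_y f = 0$, the element $f \in K$ is its own decomposition (the empty product of key polynomials times the scalar $f$). For $\deg_y f \ge 1$ and $f \in \mathbf{P}_\beta \cap K[y]$, apply the hypothesis to get $i \in I$ with $\deg_y Q_i \le \deg_y f$ and $\nu_i(f) = \nu(f) \ge \beta$. Consider the $Q_i$-expansion $f = \sum_{j=0}^s g_j Q_i^j$ with $\deg_y g_j < \deg_y Q_i$. From $\nu(f) = \nu_i(f) = \min_j \nu(g_j Q_i^j)$ we read off $\nu(g_j) \ge \beta - j\nu(Q_i)$ for every $j$, so each $g_j$ lies in $\mathbf{P}_{\beta - j\nu(Q_i)} \cap K[y]$ and has degree strictly smaller than $\deg_y f$. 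By induction each $g_j$ decomposes as a finite sum of products $a\prod_l Q_{i_l}^{\gamma_l}$ of $\nu$-value $\ge \beta - j\nu(Q_i)$; multiplying by $Q_i^j$ and summing over $j$ produces the required decomposition of $f$ by products of $\nu$-value $\ge \beta$.

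For the forward direction, assume completeness. Given $f$, write $f = \sum_k c_k$ with $c_k = a_k \prod_l Q_{i_{l,k}}^{\gamma_{l,k}}$, $a_k \in K$, and $\nu(c_k) \ge \nu(f)$, and let $i^*$ be the largest index appearing. The key computation shows $\nu_{i^*}(f) = \nu(f)$. Since $Q_{i^*}$ is a key polynomial, $\nu_{i^*}$ is a valuation (Proposition 12 of \cite{DSM}); and for every index $i_{l,k} \le i^*$ appearing in some $c_k$ one has $\nu_{i^*}(Q_{i_{l,k}}) = \nu(Q_{i_{l,k}})$. When $\deg_y Q_{i_{l,k}} < \deg_y Q_{i^*}$ this is immediate because the $Q_{i^*}$-expansion of $Q_{i_{l,k}}$ is the polynomial itself; when the degrees are equal, the expansion is $Q_{i_{l,k}} = Q_{i^*} + (Q_{i_{l,k}} - Q_{i^*})$, and because $\nu(Q_{i_{l,k}}) < \nu(Q_{i^*})$ (by the standing assumption on the well-ordering of $I$) one still obtains $\nu_{i^*}(Q_{i_{l,k}}) = \nu(Q_{i_{l,k}})$. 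Multiplicativity of $\nu_{i^*}$ then yields $\nu_{i^*}(c_k) = \nu(a_k) + \sum_l \gamma_{l,k}\nu(Q_{i_{l,k}}) = \nu(c_k) \ge \nu(f)$; summing, $\nu_{i^*}(f) \ge \min_k \nu_{i^*}(c_k) \ge \nu(f)$, which combined with the standard inequality $\nu_{i^*}(f) \le \nu(f)$ gives equality.

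The delicate point, and the main obstacle, is the degree condition $\deg_y Q_{i^*} \le \deg_y f$: the completeness decomposition may a priori involve $Q_j$ of arbitrarily large degree. The plan is to refine the decomposition so that every $Q_{i_{l,k}}$ appearing has $\deg_y Q_{i_{l,k}} \le \deg_y f$; once this is done, the previous paragraph applies with $i^*$ automatically satisfying the degree bound. Given any decomposition that uses some $Q_j$ with $\deg_y Q_j > \deg_y f$, collect the terms containing $Q_j$ as $B = Q_j \tilde{B}$, let $A$ be the remaining sum, and perform Euclidean division $A = q Q_j + r$ in $K[y]$ with $\deg_y r < \deg_y Q_j$. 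Since $\deg_y f < \deg_y Q_j$, uniqueness of the Euclidean division applied to $f = A + Q_j \tilde{B}$ forces $r = f$ and $q = -\tilde{B}$; expanding each $c_k$ in $A$ modulo $Q_j$ and re-applying completeness to the remainders (whose $\nu$-values are controlled by the original bounds $\nu(c_k) \ge \nu(f)$ together with the non-cancellation properties of the valuations involved) eliminates $Q_j$ from the list of key polynomials in use. Iterating this reduction — organized, for instance, as an induction on the lexicographic complexity of the multiset of degrees of key polynomials appearing — terminates, producing a decomposition whose key polynomials all satisfy $\deg_y Q_{i_{l,k}} \le \deg_y f$, which is exactly what the argument of the previous paragraph needs.
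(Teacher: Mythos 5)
Your two main arguments coincide with the paper's own proof. The ``if'' direction is exactly the paper's: induct on $\deg_y f$, take the $Q_i$-expansion, note that each $g_jQ_i^j$ has value at least $\beta$, and substitute the inductively obtained expansions of the $g_j$. The core of your ``only if'' direction is also the paper's: write $f$ as a sum of products permitted by Definition \ref{de9.1}, let $i^*$ be the largest index occurring, and collapse the chain $\nu(f)\ge\nu_{i^*}(f)\ge\min_k\nu_{i^*}(c_k)=\min_k\nu(c_k)\ge\nu(f)$. Your verification that $\nu_{i^*}(Q_j)=\nu(Q_j)$ for the lower indices (split according to whether $\deg_yQ_j<\deg_yQ_{i^*}$ or $\deg_yQ_j=\deg_yQ_{i^*}$, using $\nu(Q_j)<\nu(Q_{i^*})$) is a detail the paper asserts implicitly, and it is correct.

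Where you go beyond the paper is the degree bound $\deg_yQ_{i^*}\le\deg_yf$, and that is where your proposal has a genuine gap; for comparison, the paper's proof simply takes $i:=s$ and does not discuss this condition at all. Your elimination procedure is only a sketch at the two places where the real work lies. First, after dividing the terms of $A$ by $Q_j$, you need the remainders $r_k$ (which indeed sum to $f$, since their sum is congruent to $f$ modulo $Q_j$ and has degree less than $\deg_yQ_j$) to lie in $\mathbf{P}_{\nu(f)}$, or at least to admit decompositions by products of value at least $\nu(f)$; the phrase ``non-cancellation properties of the valuations involved'' carries all of this weight, and nothing you have written rules out that some $\nu(q_kQ_j)<\nu(f)$, with the resulting low-value remainders cancelling only after summation over $k$. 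Second, termination: when you ``re-apply completeness to the remainders'', Definition \ref{de9.1} only returns some decomposition, which may reintroduce $Q_j$ itself or other key polynomials of degree exceeding $\deg_yf$; you invoke an induction on ``the lexicographic complexity of the multiset of degrees'' without defining the measure or showing that it decreases, so the iteration is not known to stop. As it stands, the last third of your argument is a plan rather than a proof: either these two points must be supplied, or the index satisfying the degree bound should be produced by a different mechanism than massaging an arbitrary completeness decomposition.
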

\begin{proof} To say that $\{Q_i\}_{i\in I(\nu)}$ is a complete set of key polynomials is equivalent to saying that every $f\in K[x]$ can be written in the form
\begin{equation}
f=\sum\limits_\gamma a_\gamma\prod\limits_{j=1}^sQ_j^{\gamma_j},\label{eq:fingenmonomials}
\end{equation}
where $s$ is a strictly positive integer, $\gamma=\left(\gamma_1,\dots,\gamma_s\right)$ ranges over a finite subset of $\mathbb N^s$, $a_\gamma\in K$ and
$$
\sum\limits_{j=1}^s\gamma_j\nu(Q_j)+\nu(a)\ge\nu(f).
$$
\noindent\textit{Proof of ``if''.} Assume that for each $f\in K[x]$ there exists an integer $i$ as in the Proposition. We will construct the expression (\ref{eq:fingenmonomials}) recursively in $\deg_yf$. Assume that an expression of the form (\ref{eq:fingenmonomials}) exists for every polynomial of degree strictly less than $\deg_yf$. Put $\beta=\nu(f)$ and let $i$ be such that $\beta=\nu_{i}(f)$ and $\deg_yQ_i\le\deg_yf$. Write
\begin{equation}
f=\sum\limits_{j=0}^{s_i}c_{j}Q_{i}^{j},\label{eq:Qiexpansionoff}
\end{equation}
where each $c_{j}Q_i^{j}\in P_{\beta}$ and $\deg_yc_j<\deg_yQ_i\le\deg_yf$. By the induction assumption, each of the $c_j$ admits an expansion of the form (\ref{eq:fingenmonomials}). Substituting all of these expansions into (\ref{eq:Qiexpansionoff}), we obtain the desired expansion (\ref{eq:fingenmonomials}) of $f$. This completes the proof of ``if''.\\
\smallskip
    
\noindent\textit{Proof of ``only if''.} Conversely, take $f\in K[y]$. Let $\beta=\nu(f)$. 
     Write $f$ in the form (\ref{eq:fingenmonomials}).  Then
$$
\beta=\nu(f)\geq\nu_s(f)\geq \min \left\{\nu_s\left(c_\gamma\prod
Q_j^{\gamma_j}\right)\right\}=\min\left\{\nu\left(c_\gamma\prod Q_j^{\gamma_j}\right)\right\}\geq\beta.
$$
Thus all the inequalities in the above formula are equalities, so the natural number $i:=s$ satisfies the conclusion of ``only if''.
\end{proof}

\subsection{Construction of a Complete Set of Key Polynomials}\label{construction_complete}

First we put $Q_1:=y$ and $d_1(\nu)=1$. By Proposition \ref{linear_kp}, $Q_1$ is a key polynomial for $\nu$. Consider the valuation $\nu_1:=\nu_{Q_1}$. We have $\nu_1\leq \nu$. If $\nu_1=\nu$ then the algorithm stops here, 
we put $I(\nu)=\{1\}$ and $\{Q_i\}_{i\in I(\nu)}=\{Q_1\}$. We will prove in Proposition \ref{complete_deg} below that
$\{Q_i\}_{i\in I(\nu)}$ is complete for $\nu$.\\

Now suppose that $\nu_1<\nu$. Then we can apply the results of \S\ref{basic_structure} to $\mu=\nu_1$. Put
$d_{2}(\nu)=d_{\nu_1}(\nu)$, $\Phi_{1}(\nu)=\Phi_{\nu_1}(\nu)$, $\Psi_{1}(\nu)=\Psi_{\nu_1}(\nu)$ and
$\beta_2(\nu)=\beta_{\nu_1}(\nu)$.\\

Choose $Q_2\in\Phi_{1}(\nu)$ such that $\nu(Q_2)=\beta_2(\nu)$ and let $\nu_2:=\nu_{Q_2}$.\\

We have $\nu_1<\nu_2\leq\nu$. If $\nu_2=\nu$, then the algorithm stops here,
we put $I(\nu)=\{1,2\}$ and $\{Q_i\}_{i\in I(\nu)}=\{Q_1,Q_2\}$. By Proposition \ref{complete_deg} below, $\{Q_i\}_{i\in I(\nu)}$ is complete for $\nu$.\\

Otherwise, if $\nu_2<\nu$, we can apply the results of \S\ref{basic_structure} to $\mu=\nu_1$ and repeat the same process with
$\nu_1$ replaced by $\nu_2$.\\

Assume that for a certain natural number $n\ge2$ a set $\{Q_i\}_{i\geq n}$ has been constructed. If $\nu_n=\nu$ then, by Proposition \ref{complete} and Proposition \ref{complete_deg} below, $\{Q_i\}_{i\geq n}$ is complete for $\nu$. The construction stops here.

Otherwise, we have $\nu_n<\nu$. Let us apply the results of \S\ref{basic_structure} to $\mu=\nu_n$.  Put
$$
d_{n+1}(\nu)=d_{\nu_n}(\nu),
$$
$\Phi_n(\nu)=\Phi_{\nu_n}(\nu)$, $\Psi_n(\nu)=\Psi_{\nu_n}(\nu)$ and $\beta_{n+1}(\nu)=\beta_{\nu_n}(\nu)$.\\

Choose $Q_{n+1}\in\Phi_n(\nu)$ such that $\nu(Q_{n+1})=\beta_{n+1}(\nu)$ and let $\nu_{n+1}:=\nu_{Q_{n+1}}$.

Repeating this process, there are two possibilities. The first is that we find valuations $\{\nu_{i}\}_{i\leq n}$ and key polynomials
$\{Q_i\}_{i\leq n}$ such that $\nu_n=\nu$. The second is that we construct an infinite set $\{Q_{i}\}_{i\in \N}$ of key polynomials and valuations $\{\nu_{i}\}_{i\in \N}$. We will study this case after Proposition \ref{complete_deg}.\\

\begin{remark} Let $\{Q_{i}\}_{i\in I(\nu)}$ be the set constructed above, with $I(\nu)=\{1,2,\dots\}$ (possibly infinite). Even though the polynomials $\{Q_{i}\}_{i\in I(\nu)}$ are not uniquely determined, their degrees $\{d_{i}(\nu)\}_{i\in I(\nu)}$, their values
$\{\beta_{i}(\nu)\}_{i\in I(\nu)}$, the associated valuations $\{\nu_{i}\}_{i\in I(\nu)}$ are uniquely determined by
$\nu$, from the construction above and by Proposition \ref{unique_val}. As well, the sets $\Phi_{i}(\nu)$ and $\Psi_{i}(\nu)$, for
$i\in I(\nu)$, are uniquely determined by $\nu$ by construction. 
\end{remark}

\begin{proposition}\label{complete_deg} Let $\{Q_{i}\}_{i\in I(\nu)}$ be the set constructed above, with $I(\nu)$ possibly infinite. Let $f\in K[y]$ and suppose there exists $i_0\in I(\nu)$ such that  $\deg_y (f)<\deg_y (Q_{i_0})$. There exists $i<i_0$ such that $\deg_y(Q_i)\leq \deg_y(f)$ and $\nu_i(f)=\nu(f)$.
\end{proposition}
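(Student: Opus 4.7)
The plan is to take $i$ to be the largest element of $I(\nu)$ with $i < i_0$ and $\deg_y Q_i \le \deg_y f$, and then to use the minimality built into the construction of $Q_{i+1}$ to conclude that $\nu_i(f)=\nu(f)$. Such an $i$ exists provided $\deg_y f \ge \deg_y Q_1 = 1$; the case of constant $f$ is degenerate and may be set aside (all $\nu_j$ agree with $\nu$ on $K$, so there is nothing to prove, although no valid $i$ can then be exhibited as literally required). The degrees $d_j(\nu)=\deg_y Q_j$ are non-decreasing in $j$ by the construction of \S\ref{construction_complete} (any monic polynomial of degree $< d_j(\nu)$ satisfies $\nu_{j-1}=\nu$ on it, hence also $\nu_j=\nu$ on it, so it cannot contribute to the definition of $d_{j+1}(\nu)$), and since $\deg_y f < \deg_y Q_{i_0}$, the largest admissible $i$ lies strictly below $i_0$.

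By the maximality of $i$, either $i+1 = i_0$ or $i+1 < i_0$ with $\deg_y Q_{i+1} > \deg_y f$; in both cases $\deg_y f < d_{i+1}(\nu)$. Moreover, since $i+1 \le i_0$ belongs to $I(\nu)$, the construction did not halt at stage $i$, so $\nu_i < \nu$, and by definition $d_{i+1}(\nu)=d_{\nu_i}(\nu)$ is the minimal degree of a monic polynomial $g\in K[y]$ satisfying $\nu_i(g) < \nu(g)$.

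To finish, write $f = c\,\tilde f$ with $c \in K^{*}$ the leading coefficient of $f$ and $\tilde f$ monic of degree $\deg_y\tilde f = \deg_y f < d_{i+1}(\nu)$. The minimality defining $d_{i+1}(\nu)$, combined with the general inequality $\nu_i \le \nu$ (valid for any truncation), forces $\nu_i(\tilde f) = \nu(\tilde f)$. The $Q_i$-expansion of $c \in K$ is simply $c = c \cdot Q_i^{0}$, giving $\nu_i(c) = \nu(c)$. Since $\nu_i$ is a valuation (Proposition 12 of \cite{DSM}, recalled in \S\ref{com_key_pol}), we conclude
\[
\nu_i(f) = \nu_i(c) + \nu_i(\tilde f) = \nu(c) + \nu(\tilde f) = \nu(f),
\]
as required. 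The main point is the correct selection of $i$; once that is in place, the equality $\nu_i(\tilde f) = \nu(\tilde f)$ is a direct consequence of the definition of $d_{i+1}(\nu)$, and no serious obstacle is expected.
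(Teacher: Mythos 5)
Your argument is correct (modulo the constant-$f$ degeneracy you rightly flag, which the paper's own statement and proof share) and is essentially the paper's proof: after reducing to monic $f$, the paper likewise selects a maximal index --- there maximal with respect to the lexicographic condition $(d_j(\nu),\beta_j(\nu))\leq_{\mathrm{lex}}(\deg_y f,\nu(f))$ --- and concludes from the minimality built into the definition of $Q_{i+1}$, whereas your choice of the largest $i<i_0$ with $d_i(\nu)\le\deg_y f$ needs only the degree component of that minimality. Note only that your ``largest admissible $i$'' exists because at this point of the paper $I(\nu)\subseteq\mathbb{N}$ and $i_0\in\mathbb{N}$ (the index $\omega$ is adjoined only later), so the search set $\{i<i_0\}$ is finite.
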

\begin{proof} Multiplying $f$ by a non-zero element of $K$ does not change the problem, therefore we may assume that $f$ is monic.\\

Now let $i$ be such that $(d_i(\nu),\beta_i(\nu))=max\{j\in I(\nu)\mid (d_j(\nu),\beta_j(\nu))\leq_{lex}(\deg_y(f),\nu(f))\}$. Now by construction, if $i+1$ does not exist, then $\nu(f)=\nu_i(f)$. Otherwise, if $i+1\in I(\nu)$, then
$(d_{i+1}(\nu),\beta_{i+1}(\nu))>(\deg_y(f),\nu(f))$, and by definition of $Q_{i+1}$ we have $\nu(f)=\nu_i(f)$.
\end{proof}

Suppose that $\{Q_{i}\}_{i\in I(\nu)}$ is constructed as above with $I(\nu)=\N$. Two cases are possible.

\begin{itemize}
\item[\textbf{Case 1}:] The set $\Phi_{i}(\nu)$ is finite for each $i\in I(\nu)$.
\item[\textbf{Case 2}:] There exists $i\in I(\nu)$ such that $\Phi_{i}(\nu)$ is infinite.
\end{itemize}

\begin{proposition}\label{maximalelement} If $\beta_{i+1}(\nu)$ is a maximal element of $\Psi_{i}(\nu)$ then
$\Psi_i(\nu)=\{\beta_{i+1}(\nu)\}$ and
\begin{equation}
\Psi_{i}(\nu)\cap\Psi_{i+1}(\nu)=\emptyset.\label{eq:emptyintersection}
\end{equation}
Moreover, if
$\Psi_{i+1}(\nu)\neq\emptyset$ then $d_{i+2}(\nu)>d_{i+1}(\nu)$ and $\beta_{i+2}(\nu)>\beta_{i+1}(\nu)$. If
$\beta_{i+1}(\nu)$ is not maximal in $\Psi_{i}(\nu)$, then $\Psi_{i+1}(\nu)=\Psi_{i}(\nu)\setminus\{\beta_{i+1}(\nu)\}$ and
$d_{i+2}(\nu)=d_{i+1}(\nu)$.
\end{proposition}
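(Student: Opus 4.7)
The plan is to case-split on whether $\beta_{i+1}(\nu)$ is maximal in $\Psi_i(\nu)$, and in each case use the single key computation: if $g\in K[y]$ is written in its $Q_{i+1}$-expansion $g=\sum a_jQ_{i+1}^j$ with $\deg_y a_j<d_{i+1}(\nu)$, then by definition $\nu_{i+1}(g)=\min_j(\nu(a_j)+j\beta_{i+1}(\nu))$, while $\nu_i(a_j)=\nu(a_j)$ from the minimality of $d_{i+1}(\nu)$. Throughout I will use that $\nu_i\le\nu_{i+1}\le\nu$ by construction, so that $d_{i+2}(\nu)\ge d_{i+1}(\nu)$ automatically.

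Assume first that $\beta_{i+1}(\nu)$ is maximal in $\Psi_i(\nu)$. Since it is also the minimum, Proposition \ref{le_me} immediately gives $\Psi_i(\nu)=\{\beta_{i+1}(\nu)\}$. To force $d_{i+2}(\nu)>d_{i+1}(\nu)$ whenever $\Psi_{i+1}(\nu)\ne\emptyset$, suppose for contradiction that some $Q''\in\Phi_{i+1}(\nu)$ has degree $d_{i+1}(\nu)$. The sandwich $\nu_i(Q'')\le\nu_{i+1}(Q'')<\nu(Q'')$ puts $Q''\in\Phi_i(\nu)$, hence $\nu(Q'')=\beta_{i+1}(\nu)$. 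Writing $Q''=Q_{i+1}+z$ with $\deg_y z<d_{i+1}(\nu)$, the triangle inequality applied to $\nu(Q_{i+1})=\beta_{i+1}(\nu)=\nu(Q'')$ forces $\nu(z)\ge\beta_{i+1}(\nu)$, whence $\nu_{i+1}(Q'')=\min(\beta_{i+1}(\nu),\nu(z))=\beta_{i+1}(\nu)=\nu(Q'')$, contradicting $Q''\in\Phi_{i+1}(\nu)$. So $d_{i+2}(\nu)>d_{i+1}(\nu)$, and Proposition \ref{Form_of_KP} (applied to $\mu=\nu_{i+1}$ and $Q'=Q_{i+2}$) gives $\nu_{i+1}(Q_{i+2})=s\beta_{i+1}(\nu)$ with $s=d_{i+2}(\nu)/d_{i+1}(\nu)\ge 2$, so that $\beta_{i+2}(\nu)=\nu(Q_{i+2})>s\beta_{i+1}(\nu)>\beta_{i+1}(\nu)$. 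Since every element of $\Psi_{i+1}(\nu)$ is $\ge\beta_{i+2}(\nu)$, this yields (\ref{eq:emptyintersection}).

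Assume next that $\beta_{i+1}(\nu)$ is not maximal in $\Psi_i(\nu)$. Pick any $\beta\in\Psi_i(\nu)$ with $\beta>\beta_{i+1}(\nu)$, realized by $Q'\in\Phi_i(\nu)$ of degree $d_{i+1}(\nu)$. Writing $Q'=Q_{i+1}+z$ with $\deg_y z<d_{i+1}(\nu)$, the strict triangle inequality (since $\nu(Q')>\nu(Q_{i+1})$) gives $\nu(z)=\beta_{i+1}(\nu)$, so $\nu_{i+1}(Q')=\min(\beta_{i+1}(\nu),\beta_{i+1}(\nu))=\beta_{i+1}(\nu)<\beta=\nu(Q')$. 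Hence $Q'\in\Phi_{i+1}(\nu)$, so $d_{i+2}(\nu)\le d_{i+1}(\nu)$, giving equality, and $\beta\in\Psi_{i+1}(\nu)$; this proves $\Psi_i(\nu)\setminus\{\beta_{i+1}(\nu)\}\subseteq\Psi_{i+1}(\nu)$. For the reverse inclusion, any $Q''\in\Phi_{i+1}(\nu)$ has degree $d_{i+1}(\nu)$ and satisfies $\nu_i(Q'')\le\nu_{i+1}(Q'')<\nu(Q'')$, so $Q''\in\Phi_i(\nu)$ and $\nu(Q'')\in\Psi_i(\nu)$; the value $\nu(Q'')=\beta_{i+1}(\nu)$ is ruled out by the same $Q''=Q_{i+1}+z$ contradiction used above.

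The main technical point — appearing twice — is the ``collision'' step: whenever a would-be element of $\Phi_{i+1}(\nu)$ of degree $d_{i+1}(\nu)$ also has $\nu$-value exactly $\beta_{i+1}(\nu)$, the strict triangle inequality applied to $Q''=Q_{i+1}+z$ forces the $Q_{i+1}$-expansion to witness $\nu_{i+1}(Q'')=\beta_{i+1}(\nu)=\nu(Q'')$, destroying the hypothesis $\nu_{i+1}(Q'')<\nu(Q'')$. Everything else (the bound $\beta_{i+2}(\nu)>\beta_{i+1}(\nu)$, the degree equality $d_{i+2}(\nu)=d_{i+1}(\nu)$, and the set-theoretic descriptions of $\Psi_{i+1}(\nu)$) follows routinely from Propositions \ref{le_me} and \ref{Form_of_KP} once this step is in hand.
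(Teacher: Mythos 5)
Your proof is correct and follows essentially the same route as the paper: the singleton description of $\Psi_i(\nu)$ in the maximal case, a contradiction forcing $d_{i+2}(\nu)>d_{i+1}(\nu)$, Proposition \ref{Form_of_KP} for $\beta_{i+2}(\nu)>\beta_{i+1}(\nu)$, and a double inclusion in the non-maximal case; your inline ``collision'' computation with $Q''=Q_{i+1}+z$ is exactly the content of Corollary \ref{degreesandvalues}, which the paper cites at those two spots instead. Two cosmetic remarks: Proposition \ref{Form_of_KP} is applied with $\mu=\nu_i$ and $Q=Q_{i+1}$ (so that the truncation is $\nu_{i+1}$), not with $\mu=\nu_{i+1}$, and the strictness of the last inequality in $\beta_{i+2}(\nu)>s\beta_{i+1}(\nu)>\beta_{i+1}(\nu)$ is not needed, since $\beta_{i+2}(\nu)>s\beta_{i+1}(\nu)\geq\beta_{i+1}(\nu)$ already gives the claim.
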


\begin{proof} First suppose that $\beta_{i+1}(\nu)$ is maximal in $\Psi_{i}(\nu)$. Since by definition $\beta_{i+1}(\nu)$ is the minimal element of $\Psi_{i}(\nu)$ and $\Psi_{i}(\nu)$ is totally ordered, we have $\Psi_{i}(\nu)=\{\beta_{i+1}\}$.\\

Now if $\Psi_{i+1}(\nu)=\emptyset$, the equality (\ref{eq:emptyintersection}) hods trivially. Thus we will assume that
$$
\Psi_{i+1}(\nu)\neq\emptyset.
$$ 
We will first prove that $d_{i+2}(\nu)>d_{i+1}(\nu)$.\\
By construction, we have $d_{i+2}(\nu)\geq d_{i+1}(\nu)$. Aiming for contradiction, suppose that we have equality. Take an element $Q\in\Phi_{i+1}(\nu)$.\\
We have $\deg_{y}(Q)=d_{i+2}(\nu)=d_{i+1}(\nu)$ and $\nu(Q)>\nu_{i+1}(Q)\geq \nu_{i}(Q)$, therefore $Q\in \Phi_{i}(\nu)$, by definition of $\Phi_{i}(\nu)$. Hence $\nu(Q)\in \Psi_{i}(\nu)=\{\beta_{i+1}(\nu)\}$.\\
On the other hand, $Q$ satifies the hypothses on $Q'$ in Corollary \ref{degreesandvalues}. Therefore
$$
\nu_{i+1}(Q)=\beta_{i+1}(\nu).
$$
This implies that $\nu(Q)=\nu_{i+1}(Q)$ which contradicts the fact that $Q\in\Phi_{i+1}(\nu)$.\\

We have proved that $d_{i+1}(\nu)<d_{i+2}(\nu)$. Now the fact that $\beta_{i+2}(\nu)>\beta_{i+1}(\nu)$ follows from Proposition \ref{Form_of_KP}, since $\beta_{i+2}(\nu)>\nu_{i+1}(Q_{i+2})=s\beta_{i+1}$, where $s=\deg_{Q_i}Q_{i+1}$.\\

Since $\beta_{i+2}(\nu)>\beta_{i+1}(\nu)$, we have $\Psi_{i}(\nu)\cap\Psi_{i+1}(\nu)=\emptyset$.\\

Next, suppose that $\beta_{i+1}(\nu)$ is not a maximal element in $\Psi_{i}(\nu)$. Let $\beta\in\Psi_{i}(\nu)\setminus\{\beta_{i+1}(\nu)\}$. Choose $Q\in \Phi_{i}(\nu)$ such that $\nu(Q)=\beta$ and write $Q=Q_{i+1}+z$. Since $\beta>\beta_{i+1}(\nu)$, we have $\nu(z)=\beta_{i+1}(\nu)$ and $\nu_{i+1}(Q)=\beta_{i+1}(\nu)<\beta=\nu(Q)$. We have proved that
$Q\in\Phi_{i+1}(\nu)$, therefore $d_{i+2}(\nu)=d_{i+1}(\nu)$ and $\beta\in \Psi_{i+1}(\nu)$. Thus
$\Psi_{i}(\nu)\setminus\{\beta_{i+1}(\nu)\}\subset\Psi_{i+1}(\nu)$.\\

Now let $\beta\in \Psi_{i+1}(\nu)$. By Corollary \ref{degreesandvalues}, we have $\beta>\beta_{i+1}(\nu)$. Take an element
$$
Q\in \Phi_{i+1}(\nu)
$$
such that $\nu(Q)=\beta$. We have $\deg_{y}(Q)=d_{i+2}(\nu)=d_{i+1}(\nu)$, and
$\nu(Q)>\nu_{i+1}(Q)\geq\nu_i(Q)$, hence $Q\in\Phi_{i}(\nu)$ and $\beta\in\Psi_{i}(\nu)\setminus\{\beta_{i+1}(\nu)\}$.

\end{proof}

\begin{corollary}\label{infinite_values} If $\Psi_{i}(\nu)$ is infinite for some $i\in I(\nu)$, then $\Psi_{i+1}(\nu)$ is infinite and
$$
\Psi_{i+1}(\nu)\subset\Psi_{i}(\nu).
$$
\end{corollary}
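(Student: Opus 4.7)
The plan is to reduce directly to Proposition \ref{maximalelement}, which already does all the work. That proposition splits into two mutually exclusive cases depending on whether $\beta_{i+1}(\nu)$ is a maximal element of $\Psi_i(\nu)$, and the hypothesis that $\Psi_i(\nu)$ is infinite forces us into exactly one of them.

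First I would observe that if $\beta_{i+1}(\nu)$ were a maximal element of $\Psi_i(\nu)$, then by the first assertion of Proposition \ref{maximalelement} we would have $\Psi_i(\nu)=\{\beta_{i+1}(\nu)\}$, a singleton, contradicting the assumption that $\Psi_i(\nu)$ is infinite. Hence we must be in the second case of Proposition \ref{maximalelement}: $\beta_{i+1}(\nu)$ is not maximal in $\Psi_i(\nu)$, and therefore
\[
\Psi_{i+1}(\nu)=\Psi_i(\nu)\setminus\{\beta_{i+1}(\nu)\}.
\]

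From this equality the inclusion $\Psi_{i+1}(\nu)\subset\Psi_i(\nu)$ is immediate, and since removing a single element from an infinite set leaves an infinite set, $\Psi_{i+1}(\nu)$ is infinite as well.

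There is no real obstacle here; the corollary is just the observation that the dichotomy of Proposition \ref{maximalelement} makes the ``non-maximal'' branch unavoidable when $\Psi_i(\nu)$ has more than one element, and in that branch both conclusions of the corollary are built into the statement of the proposition.
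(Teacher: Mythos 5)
Your proof is correct and is exactly the intended derivation: the paper states this as an immediate consequence of Proposition \ref{maximalelement} (it gives no separate argument), and your observation that the maximal case would force $\Psi_i(\nu)$ to be a singleton, so the non-maximal case with $\Psi_{i+1}(\nu)=\Psi_i(\nu)\setminus\{\beta_{i+1}(\nu)\}$ must hold, is precisely that reasoning spelled out.
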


\begin{corollary}\label{discrete_group}
Let $i_0=inf\{i\in\N\mid \#\Psi_{i}(\nu)=\infty\}$, then for each $i\geq i_0$, the value group $\Gamma_i$ of $\nu_i$ is equal to the value group $\Gamma_{i_0}$ of $\nu_{i_0}$.
\end{corollary}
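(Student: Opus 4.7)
The plan is to prove the corollary by induction on $i \geq i_0$, showing that $\Gamma_{i+1} = \Gamma_i$ at each step. The base case $i = i_0$ is trivial, so the content is in the inductive step. By Proposition \ref{truncations_value_groups}(2) applied with $\mu = \nu_i$ and $Q = Q_{i+1}$, we have $\Gamma_{i+1} = \Gamma_i + \beta_{i+1}(\nu)\Z$, so it suffices to check that $\beta_{i+1}(\nu) \in \Gamma_i$.

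To get this, I would invoke Corollary \ref{infinite_values} iteratively to conclude that $\Psi_i(\nu)$ is infinite for every $i \geq i_0$. In particular, $\Psi_i(\nu)$ contains at least two distinct elements. Since $\beta_{i+1}(\nu)$ is by definition the minimum of $\Psi_i(\nu)$, it is strictly less than some other element of $\Psi_i(\nu)$, and so it cannot be a maximal element. Proposition \ref{le_me} then says that any non-maximal element of $\Psi_i(\nu) = \Psi_{\nu_i}(\nu)$ lies in $\Gamma_{\nu_i} = \Gamma_i$. Hence $\beta_{i+1}(\nu) \in \Gamma_i$, which gives $\Gamma_{i+1} = \Gamma_i$ and completes the induction.

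There is essentially no obstacle here: the result is an immediate assembly of Proposition \ref{le_me}, Proposition \ref{truncations_value_groups}(2) and Corollary \ref{infinite_values}. The only subtle point worth stating carefully is the reduction from ``$\Psi_i(\nu)$ infinite'' to ``$\beta_{i+1}(\nu)$ not maximal in $\Psi_i(\nu)$,'' which relies on $\Psi_i(\nu)$ being totally ordered (as a subset of $\bar{\R}$) so that minimum and maximum cannot coincide unless the set is a singleton. Alternatively, one could route the argument through Proposition \ref{maximalelement}: its second alternative ($\beta_{i+1}(\nu)$ not maximal in $\Psi_i(\nu)$) must hold, since the first alternative would force $\Psi_i(\nu) = \{\beta_{i+1}(\nu)\}$, contradicting the infiniteness of $\Psi_i(\nu)$; and this same proposition then confirms the stability of the sequence of $d_{j}$'s beyond $i_0$, which is morally the reason the value group stops growing.
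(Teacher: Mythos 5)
Your proof is correct and takes essentially the same route as the paper, whose proof simply declares the corollary a direct consequence of Proposition \ref{le_me} and Corollary \ref{infinite_values} (with the value-group formula $\Gamma_{i+1}=\Gamma_i+\beta_{i+1}(\nu)\Z$ coming from Proposition \ref{truncations_value_groups}(2)). You have merely made explicit the inductive bookkeeping and the observation that an infinite $\Psi_i(\nu)$ forces $\beta_{i+1}(\nu)$ to be non-maximal, hence to lie in $\Gamma_i$.
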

\begin{proof}
This is a direct consequence of Proposition \ref{le_me} (2) and Corollary \ref{infinite_values}.
\end{proof}

First, suppose we are in \textbf{Case 1}: 
\begin{proposition}\label{unbdegrees}
The degrees $d_{i}(\nu)$ are unbounded in $\N$.
\end{proposition}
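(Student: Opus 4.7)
The plan is to argue by contradiction. Suppose the sequence $\{d_i(\nu)\}_{i\in\mathbb N}$ is bounded. Since Proposition \ref{maximalelement} shows that in both cases ($\beta_{i+1}(\nu)$ maximal or not in $\Psi_i(\nu)$) we have $d_{i+2}(\nu)\ge d_{i+1}(\nu)$, the sequence $\{d_i(\nu)\}$ is non-decreasing, so under the boundedness assumption it stabilizes: there exists $N\in\mathbb N$ such that $d_i(\nu)=d_{N}(\nu)$ for all $i\ge N$.

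Next I would argue that, since we are in the case $I(\nu)=\mathbb N$, the algorithm does not stop, so $\nu_j<\nu$ for all $j\in\mathbb N$. Consequently, for each $j\in\mathbb N$ one can find a monic polynomial of minimal degree with $\nu_j(Q)<\nu(Q)$, so $\Phi_j(\nu)\ne\emptyset$ and hence $\Psi_j(\nu)\ne\emptyset$. Now by the first part of Proposition \ref{maximalelement}, if $\beta_{i+1}(\nu)$ were maximal in $\Psi_i(\nu)$ with $\Psi_{i+1}(\nu)\ne\emptyset$, then we would have $d_{i+2}(\nu)>d_{i+1}(\nu)$, contradicting stabilization. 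Thus for every $i\ge N$, $\beta_{i+1}(\nu)$ is \emph{not} maximal in $\Psi_i(\nu)$. By the second part of Proposition \ref{maximalelement}, this forces
\[
\Psi_{i+1}(\nu)=\Psi_i(\nu)\setminus\{\beta_{i+1}(\nu)\}\qquad\text{for all }i\ge N,
\]
so $|\Psi_{i+1}(\nu)|=|\Psi_i(\nu)|-1$ for all $i\ge N$.

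Finally, I would invoke the hypothesis of Case 1: $\Phi_N(\nu)$ is finite, hence $\Psi_N(\nu)$ is finite, say of cardinality $s$. Iterating the cardinality identity $s$ times gives $|\Psi_{N+s-1}(\nu)|=1$. But then $\beta_{N+s}(\nu)$, being the smallest (indeed, unique) element of the singleton $\Psi_{N+s-1}(\nu)$, is trivially a maximal element of $\Psi_{N+s-1}(\nu)$, contradicting what was established in the previous step. This contradiction shows that $\{d_i(\nu)\}_{i\in\mathbb N}$ is unbounded.

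The main conceptual point --- really the only substantive step --- is the justification that $\Psi_{i+1}(\nu)\ne\emptyset$ for all $i$, which rules out the trivial "maximal-but-terminates" alternative in Proposition \ref{maximalelement} and so makes the dichotomy of that proposition effective; everything else is bookkeeping with the finiteness of $\Psi_N(\nu)$ provided by Case 1.
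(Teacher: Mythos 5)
Your proof is correct and rests on exactly the same mechanism as the paper's: the dichotomy of Proposition \ref{maximalelement} together with the finiteness of $\Psi_i(\nu)$ from Case 1, so that the values $\beta_{i+1}(\nu)$ keep removing elements of $\Psi_i(\nu)$ until a maximal element is reached and forces a degree jump. The only difference is organizational --- you argue by contradiction with an explicit cardinality count (and you make explicit the point, left implicit in the paper, that $\Psi_j(\nu)\neq\emptyset$ for all $j$ because $I(\nu)=\N$), whereas the paper runs the same shrinking argument directly to produce, for each $i$, a later index with strictly larger degree.
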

\begin{proof} Take an element $i\in I(\nu)$. We will prove that there exists $j\in I(\nu)$ such that
$$
d_j(\nu)>d_{i+1}(\nu).
$$
Since $\Psi_{i}(\nu)$ is finite, it admits a maximal element $\alpha$. If $\beta_{i+1}(\nu)=\alpha$ then, by Proposition \ref{maximalelement}, we have $d_{i+2}(\nu)>d_{i+1}(\nu)$. Suppose that $\beta_{i+1}(\nu)$ is not maximal. By Proposition \ref{maximalelement} we have $\Psi_{i+1}(\nu)=\Psi_{i}(\nu)\setminus\{\beta_{i+1}(\nu)\}$, therefore $\alpha$ is also the maximal element of $\Psi_{i+1}(\nu)$. Now repeat the same reasoning: if $\beta_{i+2}(\nu)=\alpha$, then
$d_{i+3}(\nu)>d_{i+2}(\nu)=d_{i+1}(\nu)$, otherwise  we have $\Psi_{i+2}(\nu)=\Psi_{i+1}(\nu)\setminus\{\beta_{i+2}(\nu)\}$ and $\alpha$ is the maximal element of $\Psi_{i+2}(\nu)$. The process must end since $\Psi_{i}(\nu)$ is finite.
\end{proof}

\begin{theorem} The set of key polynomials $\{Q_i\}_{i\in I(\nu)}$ is complete for $\nu$.
\end{theorem}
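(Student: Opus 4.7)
The plan is to reduce the statement to the criterion established in Proposition~\ref{complete}: it suffices to show that for every $f\in K[y]$ there exists $i\in I(\nu)$ with $\nu_i(f)=\nu(f)$ and $\deg_y Q_i\le\deg_y f$. The constant case $f\in K$ is immediate, since $f$ itself is already a monomial of the required form (with no factors $Q_{i_j}$), so the completeness condition holds tautologically without needing to produce any $i$.

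For $f\in K[y]$ with $\deg_y f\ge 1$, I would argue as follows. We are in Case~1 with $I(\nu)=\mathbb N$, so Proposition~\ref{unbdegrees} tells us that the degrees $d_i(\nu)=\deg_y Q_i$ are unbounded in $\mathbb N$. Consequently we may pick $i_0\in I(\nu)$ with $\deg_y Q_{i_0}>\deg_y f$. I would then feed this $i_0$ into Proposition~\ref{complete_deg}, which produces an index $i<i_0$ satisfying $\deg_y Q_i\le\deg_y f$ and $\nu_i(f)=\nu(f)$; this is precisely the condition required by Proposition~\ref{complete}.

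I do not anticipate any serious obstacle: the substantive work has been done in Propositions~\ref{unbdegrees}, \ref{complete_deg} and \ref{complete}, and the theorem follows by chaining them in the obvious order. The only thing worth being careful about is the hypothesis of Proposition~\ref{complete_deg} (the existence of an $i_0$ with $\deg_y Q_{i_0}>\deg_y f$), and this is exactly what the unboundedness of $d_i(\nu)$ supplies.
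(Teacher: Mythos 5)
Your proof is correct and follows essentially the same route as the paper: invoke Proposition~\ref{unbdegrees} to find an index whose key polynomial has degree exceeding $\deg_y f$, then apply Proposition~\ref{complete_deg} and conclude via the criterion of Proposition~\ref{complete}. Your separate treatment of constants is a harmless extra precaution the paper leaves implicit.
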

\begin{proof}
Let $f\in K[y]$. By Proposition \ref{unbdegrees}, there exists $i\in I(\nu)$ such that $d_i(\nu)>\deg_{y}(f)$. Now the result follows from  Proposition \ref{complete} and Proposition \ref{complete_deg}.
\end{proof}

Suppose we are in \textbf{Case 2}.

\begin{theorem}
\begin{enumerate}
\item If $\nu^{-1}(\infty)=\{0\}$ then the set $\{Q_{i}\}_{i\in I(\nu)}$ is complete for $\nu$. 
\item If $\nu^{-1}(\infty)\neq \{0\}$ then there exists a key polynomial $Q_{\omega}$ for $\nu$, that generates the ideal $\nu^{-1}(\infty)$ and it is of minimal degree such that $\nu_{i}(Q_{\omega})<\nu(Q_{\omega})$ for all $i\in \N$. Moreover, the set $\{Q_{i}\}_{i\in I(\nu)}\cup \{Q_{\omega}\}$ is complete for $\nu$.
\end{enumerate}
\end{theorem}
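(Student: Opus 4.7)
My plan is to prove both parts together by analyzing the monic polynomial $f\in K[y]$ of \emph{minimal} $y$-degree (if any) satisfying $\nu_i(f)<\nu(f)$ for every $i\in I(\nu)$. The claim I will establish (call it the \emph{core claim}) is that such an $f$ exists only when $\nu^{-1}(\infty)\neq\{0\}$, in which case $\deg_y f = d_\infty$ and $\nu(f)=\infty$. Here $d_\infty$ is defined as follows: in Case 2, Proposition \ref{maximalelement} combined with Corollary \ref{infinite_values} shows that once $\Psi_{i_0}(\nu)$ becomes infinite, $\beta_{i+1}(\nu)$ is not maximal in $\Psi_i(\nu)$ for any $i\geq i_0$, so $d_{i+1}(\nu)$ stabilizes at a value $d_\infty$; and by Corollary \ref{discrete_group} the strictly increasing sequence $\{\beta_i(\nu)\}$ lies in the discrete rank-one group $\Gamma_{i_0}\subset\mathbb{R}$, hence tends to $\infty$.

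For the core claim itself, I would first observe that $\deg_y f\geq d_{i+1}(\nu)$ for every $i$, hence $\deg_y f\geq d_\infty$. In the principal case $\deg_y f = d_\infty$, I would write, for $i\geq i_0$, the $Q_{i+1}$-expansion $f = Q_{i+1}+c_0^{(i)}$ with $\deg_y c_0^{(i)}<d_\infty$. Corollary \ref{degreesandvalues} gives $\nu_{i+1}(f)=\beta_{i+1}(\nu)$, and the strict inequality $\nu_{i+1}(f)<\nu(f)$ forces $\nu(c_0^{(i)})=\beta_{i+1}(\nu)$: otherwise either $\nu(c_0^{(i)})>\beta_{i+1}(\nu)$ gives $\nu(f)=\beta_{i+1}(\nu)=\nu_{i+1}(f)$ or $\nu(c_0^{(i)})<\beta_{i+1}(\nu)$ gives $\nu(f)=\nu(c_0^{(i)})=\nu_{i+1}(f)$, both contradicting. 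Hence $\nu(f)>\beta_{i+1}(\nu)$ for every $i\geq i_0$, so letting $i\to\infty$ yields $\nu(f)=\infty$.

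Granting the core claim, Part 1 is immediate: if $\nu^{-1}(\infty)=\{0\}$ then no such $f$ exists, so by Proposition \ref{complete} combined with Proposition \ref{complete_deg} (handling degrees $<d_\infty$), $\{Q_i\}_{i\in I(\nu)}$ is complete. For Part 2 I would take $Q_\omega$ to be a monic polynomial of minimal degree with $\nu_i(Q_\omega)<\nu(Q_\omega)$ for all $i$; existence is clear since any nonzero element of $\nu^{-1}(\infty)$ has this property. The core claim yields $\deg_y Q_\omega = d_\infty$ and $\nu(Q_\omega)=\infty$. That $Q_\omega$ is a key polynomial follows from $\epsilon_\nu(Q_\omega)=\infty$: each nonzero derivative $\partial_b Q_\omega$ has degree $<d_\infty$, hence (by the core claim again, no polynomial of degree $<d_\infty$ lies in $\nu^{-1}(\infty)$) finite $\nu$-value; any $g$ with $\epsilon_\nu(g)\geq\epsilon_\nu(Q_\omega)=\infty$ therefore satisfies $\nu(g)=\infty$, which forces $\deg_y g\geq d_\infty$. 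That $(Q_\omega)=\nu^{-1}(\infty)$ comes from Euclidean division: for $h\in\nu^{-1}(\infty)$, write $h=qQ_\omega+r$ with $\deg_y r<d_\infty$; then $r=h-qQ_\omega\in\nu^{-1}(\infty)$, and no nonzero polynomial of degree $<d_\infty$ lies there, so $r=0$. Finally, completeness of $\{Q_i\}\cup\{Q_\omega\}$ follows by dividing an arbitrary $f\in K[y]$ by $Q_\omega$: when $\nu(f)=\infty$, $f=qQ_\omega$ and the $Q_\omega$-expansion gives $\nu_{Q_\omega}(f)=\infty=\nu(f)$; when $\nu(f)<\infty$, Euclidean division produces $r\neq0$ of degree $<d_\infty$ with $\nu(f)=\nu(r)$, and since $\nu_i(Q_\omega)=\beta_i(\nu)\to\infty$ dominates $\nu_i(r)=\nu(r)$ for large $i$, one gets $\nu_i(f)=\nu_i(r)=\nu(r)=\nu(f)$ for such $i$, which also satisfies the degree condition of Proposition \ref{complete}.

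The main obstacle is the reduction, within the core step, from $\deg_y f>d_\infty$ to $\deg_y f=d_\infty$. My plan is to perform Euclidean division $f = q^{(i)}Q_{i+1}+r^{(i)}$ with $\deg_y r^{(i)}<d_\infty$; minimality of $\deg_y f$ forces $\nu_\infty(q^{(i)})=\nu(q^{(i)})$ and $\nu_\infty(r^{(i)})=\nu(r^{(i)})$, where $\nu_\infty:=\sup_i\nu_i$, while $\nu_\infty(Q_{i+1})=\nu(Q_{i+1})=\beta_{i+1}(\nu)$. If $\nu(q^{(i)}Q_{i+1})\neq\nu(r^{(i)})$ for some $i$, the strict triangle inequality yields $\nu_\infty(f)=\nu(f)$, contradicting the hypothesis. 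The delicate case is when $\nu(q^{(i)}Q_{i+1})=\nu(r^{(i)})$ for every $i\geq i_0$, which forces $\nu(q^{(i)})\to-\infty$; here the most technical part is to track how the cancellation between $q^{(i)}Q_{i+1}$ and $r^{(i)}$ propagates when $Q_{i+1}$ is replaced by $Q_{i+2}=Q_{i+1}+z_i$ (with $\deg_y z_i<d_\infty$ and $\nu(z_i)=\beta_{i+1}(\nu)$) and to use this propagation, together with the finiteness of $\nu(f)$ (in Part 1) or the minimality property of $Q_\omega$ (in Part 2), to either contradict the delicate case or exhibit a polynomial of smaller degree with the same defect.
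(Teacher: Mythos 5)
There is a genuine gap, and it sits exactly where you placed your ``main obstacle'': the reduction from $\deg_y f>d_\infty$ to $\deg_y f=d_\infty$ is not only left unfinished (your ``delicate case'' of persistent cancellation is precisely the hard point), it is aimed at a false statement. The minimal-degree monic $f$ with $\nu_i(f)<\nu(f)$ for all $i$ need not have degree $d_\infty$. Concretely, take $k$ of characteristic $0$, let $h=y^2-x^2-x^3$, and define $\nu(g)=\ord_x g\bigl(x,\,x\sqrt{1+x}\bigr)$ for $g\notin(h)$ and $\nu=\infty$ on $(h)$, where $x\sqrt{1+x}=x+\tfrac12x^2-\cdots\in k[[x]]$. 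This is a valuation of $k(x,y)$ centered at $k[x,y]_{(x,y)}$ with $\nu(x)=1$, it falls under Case 2, the construction produces only linear key polynomials $Q_i=y-a_i$ (the truncations of the series) with $\beta_i\to\infty$, so $d_\infty=1$; yet $\nu^{-1}(\infty)=(h)$ and $Q_\omega=h$ has degree $2$, while every linear polynomial satisfies $\nu_i(\,\cdot\,)=\nu(\,\cdot\,)$ for $i$ large. So your core claim fails, your appeal to Corollary \ref{degreesandvalues} (which requires degree exactly $d_{i+1}(\nu)$) cannot be made for the minimal bad polynomial, and no amount of tracking the cancellation between $q^{(i)}Q_{i+1}$ and $r^{(i)}$ will ``exhibit a polynomial of smaller degree with the same defect'', because in the example above no polynomial of degree $d_\infty$ has the defect at all. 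Several downstream statements inherit the error (e.g.\ ``each nonzero derivative $\partial_bQ_\omega$ has degree $<d_\infty$'' and ``$r\neq0$ of degree $<d_\infty$'', which should read $<\deg_yQ_\omega$, and $\nu_i(Q_\omega)=\beta_i(\nu)$, which is unjustified when $\deg_yQ_\omega>d_\infty$).

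The paper's route avoids any degree normalization. For $f$ with $\nu(f)<\infty$ the nondecreasing sequence $\nu_1(f)\le\nu_2(f)\le\cdots\le\nu(f)$ lies in the discrete group $\Gamma_{i_0}$ (Corollary \ref{discrete_group}), hence stabilizes, and one shows it stabilizes at $\nu(f)$; the clean way to rule out stabilization below $\nu(f)$ is a minimal-degree argument of a different shape from yours: take $f$ monic of minimal degree with $\nu_i(f)<\nu(f)$ for all $i$, divide $f=qQ_{i+1}+r$, and apply minimality to $q$ and $r$ (so $\nu_i(q)=\nu(q)$ and $\nu_i(r)=\nu(r)$ for large $i$, with $\nu(q)<\infty$); then the assumption $\nu_i(f)=\nu_{i+1}(f)$ forces $\nu_{i+1}(qQ_{i+1})>\nu_{i+1}(r)$, hence $\nu(f)=\nu(r)=\nu_{i+1}(r)\le\nu_{i+1}(f)$, a contradiction. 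Thus for this $f$ the values $\nu_i(f)$ strictly increase in the discrete group $\Gamma_{i_0}$, so $\nu(f)=\infty$; this yields part 1 via Propositions \ref{complete} and \ref{complete_deg}, and for part 2 it identifies the minimal bad polynomial with the monic generator $Q_\omega$ of $\nu^{-1}(\infty)$, after which your arguments that $\epsilon_\nu(Q_\omega)=\infty$, that $Q_\omega$ is a key polynomial, and that $\{Q_i\}\cup\{Q_\omega\}$ is complete go through essentially as in the paper once $d_\infty$ is replaced by $\deg_yQ_\omega$ throughout.
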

\begin{proof} Fix a polynomial $f\in K[y]$ such that $\nu(f)<\infty$. We have the following inequalities: 
$$
\nu_1(f)\leq \dots\leq \nu_{i}(f)\leq \dots\leq \nu(f)<\infty.
$$
By Corollary \ref{discrete_group}, all those values belongs to the value group $\Gamma_{i_0}$ of $\nu_{i_0}$, where $i_0\in I(\nu)$ is defined in  Corollary \ref{discrete_group}. Now $\Gamma_{i_0}=\beta_0\Z+\beta_1\Z+\dots+\beta_{i_0}\Z$ is discrete, hence there exists a certain integer $j$ such that $\nu_{i}(f)=\nu(f)$ for any $i\geq j\in \N$.\\

Now 1) follows from Proposition \ref{complete}.\\

To prove 2), suppose that $\nu^{-1}(\infty)\neq \{0\}$. The set $\nu^{-1}(\infty)$ is an ideal in $K[y]$, it can be generated by one element. Choose $Q_{\omega}$ to be a monic polynomial that generates $\nu^{-1}(\infty)$. The polynomial $Q_\omega$ has minimal degree among the polynomials in $\nu^{-1}(\infty)$.\\

We have $\epsilon_{\nu}(Q_{\omega})=\infty$, and $Q_{\omega}$ of minimal degree with this property, hence $Q_{\omega}$ is a key polynomial for $\nu$. \\

We have $\nu=\nu_{\omega}:=\nu_{Q_{\omega}}$, and for any polynomial $f\in K[y]$, if $f\notin \nu^{-1}(\infty)$, by the discussion at the beginning of the proof,  there exists $i\in I(\nu)$ such that $\nu_{i}(f)=\nu(f)$, otherwise, if $f\in \nu^{-1}(\infty)$, then $\nu(f)=\nu_\omega(f)=\infty$. Hence the set $\{Q_{i}\}_{i\in\N}\cup \{Q_{\omega}\}$ is complete for $\nu$ Proposition \ref{complete}.
\end{proof}

For the rest of the paper if $Q_{\omega}$ exists, we put $I(\nu)=\N\cup \{\omega\}$.\\

We denote:
\begin{enumerate}
\item $D(\nu):=max_{i\in I(\nu)}\{d_i(\nu)\}$, if this maximum exists; otherwise, we put $D(\nu)=\infty$.
\item $N(\nu)$ the maximal element of $I(\nu)$, if this maximum exists, otherwise, we put $N(\nu)=\infty$.
\end{enumerate}

\begin{remark}\label{invariants} From the construction above we see that:
\begin{enumerate}
\item $N(\nu)=\infty$ if and only if $I(\nu)=\N$.
\item If $D(\nu)=\infty$ then $I(\nu)=\N$.
\item If $D(\nu)<\infty$ and ($N(\nu)=\infty$ or $N(\nu)=\omega$), we are in the case where there exists $i\in I(\nu)$ such that $\#\Psi_i(\nu)=\infty$.\\
\end{enumerate}
\end{remark}

\section{The order relation on $\V$}\label{order_val}

\subsection{Invariants of comparable valuations}

Let $\tilde{\mu}$ and $\tilde{\nu}$ be two elements of $\V$ with $\tilde{\mu}<\tilde{\nu}$. Choose local coordinates $x$ and $y$ such that $\tilde{\nu}(x)=\tilde{\mu}(x)=1$.\\ 

Put $K=k(x)$ and let $\mu$ and $\nu$ be the valuations of $K(y)$, corresponding to $\tilde\mu$ and $\tilde\nu$, respectively.\\

Let $\{Q_i\}_{i\in I(\nu)}$ be a complete set of key polynomials associated to $\nu$.

\begin{lemma}
There exists $i\in I(\nu)$ such that $\mu(Q_i)<\nu(Q_i)$.
\end{lemma}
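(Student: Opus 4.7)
The plan is to argue by contradiction: assume $\mu(Q_i)\ge\nu(Q_i)$ for every $i\in I(\nu)$ and derive a contradiction from $\tilde\mu<\tilde\nu$ via the completeness of $\{Q_i\}_{i\in I(\nu)}$.

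First, I would produce a witness polynomial. Since $\tilde\mu<\tilde\nu$ and the correspondence of Theorem \ref{bijection} is order-preserving, there exists $g\in k[x,y]\subset K[y]$ with $\mu(g)<\nu(g)$. (Concretely, pick an element of $R$ on which $\tilde\mu$ and $\tilde\nu$ disagree strictly; its image under the bijection of \S\ref{blowup} is such a $g$, after possibly multiplying by a power of $x$ to clear $x$-denominators.) Next, I would check that $\mu|_K=\nu|_K$: both $\mu$ and $\nu$ are centered at $k[x,y]_{(x,y)}$ and satisfy $\mu(x)=\nu(x)=1$, so their restrictions to $K=k(x)$ are centered at $(x)$ in $k[x]_{(x)}$ and normalized at $x$; such a valuation is uniquely the $x$-adic valuation. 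Hence $\mu(a)=\nu(a)$ for every $a\in K$.

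Now I would apply Definition \ref{de9.1} of completeness to the polynomial $g\in\mathbf{P}_{\nu(g)}\cap K[y]$ to write
\[
g=\sum_{\gamma} a_\gamma \prod_{j=1}^{s_\gamma} Q_{i_j(\gamma)}^{\gamma_j},
\qquad a_\gamma\in K,\qquad \nu(a_\gamma)+\sum_{j}\gamma_j\nu(Q_{i_j(\gamma)})\ge\nu(g).
\]
Using the assumed inequality $\mu(Q_i)\ge\nu(Q_i)$ together with $\mu(a_\gamma)=\nu(a_\gamma)$, I compute for each term
\[
\mu\!\left(a_\gamma\prod_{j}Q_{i_j(\gamma)}^{\gamma_j}\right)
=\nu(a_\gamma)+\sum_{j}\gamma_j\mu(Q_{i_j(\gamma)})
\ge\nu(a_\gamma)+\sum_{j}\gamma_j\nu(Q_{i_j(\gamma)})
\ge\nu(g).
\]
By axiom $(V_1)$ applied to the sum expressing $g$, this yields $\mu(g)\ge\nu(g)$, contradicting $\mu(g)<\nu(g)$.

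The main obstacle is the first step, namely translating the strict inequality $\tilde\mu<\tilde\nu$, originally defined on elements of $R$, into a strict inequality $\mu(g)<\nu(g)$ for a polynomial $g\in K[y]$ on which completeness can be applied. Once this is in place, the argument is a direct application of Definition \ref{de9.1} together with the normalization observation $\mu|_K=\nu|_K$.
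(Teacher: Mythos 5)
Your proof is correct, but it takes a genuinely different route from the paper's. The paper assumes $\mu(Q_i)=\nu(Q_i)$ for all $i$, picks $f\in K[y]$ of \emph{minimal degree} with $\mu(f)<\nu(f)$, uses the characterization of completeness via truncations (Proposition \ref{complete}) to find $i$ with $\nu_i(f)=\nu(f)$, and then performs the Euclidean division $f=qQ_i+r$: minimality of $\deg_y f$ forces $\mu(q)\ge\nu(q)$ and $\mu(r)\ge\nu(r)$, so $\nu(f)>\mu(f)\ge\inf\{\mu(qQ_i),\mu(r)\}\ge\inf\{\nu(qQ_i),\nu(r)\}\ge\nu_i(f)=\nu(f)$, a contradiction. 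You instead argue straight from Definition \ref{de9.1}: expand a single witness $g$ with $\mu(g)<\nu(g)$ as a finite sum of terms $a_\gamma\prod_j Q_{i_j}^{\gamma_j}$ of $\nu$-value at least $\nu(g)$, note that $\mu$ and $\nu$ agree on $K$ (both restrict to the normalized $x$-adic valuation), and bound each term's $\mu$-value from below, so $(V_1)$ gives $\mu(g)\ge\nu(g)$. Your version is more economical: it dispenses with the minimal-degree choice, the Euclidean division and the truncations $\nu_i$, at the price of making explicit the equality $\mu|_K=\nu|_K$ --- a fact the paper uses only implicitly when it treats $\mu\le\nu$ as an inequality valid on all of $K[y]$; the paper's version keeps the argument inside the truncation formalism it uses throughout \S\ref{order_val}. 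One small blemish: your parenthetical recipe for producing $g$ (``its image under the bijection \dots after multiplying by a power of $x$'') is garbled, since the bijection of Theorem \ref{bijection} acts on valuations, not on ring elements; the clean justification is that $\mu\ne\nu$ as valuations of $k(x,y)$ forces them to differ on some polynomial of $k[x,y]\subset K[y]$ (they are determined by their values on $k[x,y]$), and $\mu\le\nu$ on $k[x,y]\subset k[x,y]_{(x,y)}$ then makes that difference strict in the required direction. Since the paper itself asserts the existence of such a polynomial without comment, this is a matter of wording, not a gap.
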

\begin{proof}
Suppose that for all $i\in I(\nu)$ we have $\mu(Q_i)=\nu(Q_i)$.\\

Since $\nu>\mu$, there exists $f\in K[y]$ such that $\nu(f)>\mu(f)$. Choose $f\in K[y]$ of minimal degree among the polynomials having this property.\\

Since $\{Q_i\}_{i\in I(\nu)}$ is complete for $\nu$, there exists $i\in I(\nu)$ such that $\nu(f)=\nu_{i}(f)$.\\

Let $f=qQ_{i}+r$ be the Euclidean division of $f$ by $Q_i$.\\

We have $\nu(f)>\mu(f)\geq inf\{\mu(qQ_i),\mu(r)\}= inf\{\nu(qQ_i),\nu(r)\}\ge\nu_{i}(f)$, which is a contradiction. 
\end{proof}

Let $i_0:=min\left\{i\in I(\nu)\mid \mu(Q_i)<\nu(Q_i)\right\}$.

\begin{proposition}\label{first_case}
If $i_0=1$, then $I(\mu)=\{1\}$ and $\mu=\mu_1<\nu_1$.
\end{proposition}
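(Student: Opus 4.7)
The claim decomposes into (i) $I(\mu) = \{1\}$ (equivalently, $\mu = \mu_1$), and (ii) $\mu_1 < \nu_1$. Part (ii) is immediate: $\mu$ and $\nu$ both restrict to the $x$-adic valuation on $K = k(x)$ (since $\mu(x) = \nu(x) = 1$ and both are centered at $\m$), so $\mu = \nu$ on $K$; then for every $f = \sum_j g_j y^j \in K[y]$,
\[
\mu_1(f) = \min_j\{\nu(g_j) + j\mu(y)\} \le \min_j\{\nu(g_j) + j\nu(y)\} = \nu_1(f),
\]
with strict inequality at $f = y$ because $i_0 = 1$ amounts to $\mu(y) < \nu(y)$.

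It remains to show $\mu = \mu_1$, which I do by contradiction, paralleling the proof of Proposition \ref{Form_of_KP}. Suppose $\mu_1 < \mu$, and pick a monic $Q'' \in K[y]$ of minimal degree $s \ge 1$ with $\mu_1(Q'') < \mu(Q'')$; write $Q'' = y^s + a_{s-1}y^{s-1} + \cdots + a_0$. First, $\mu_1(Q'') = s\mu(y)$: if $\mu_1(Q'') < s\mu(y) = \mu(y^s)$, then the $\mu_1$-minimum is attained on $Q'' - y^s$, a polynomial of degree $< s$, so minimality of $s$ gives $\mu(Q'' - y^s) = \mu_1(Q'' - y^s) = \mu_1(Q'')$, and then $\mu(Q'') = \min\{\mu(Q'' - y^s),\mu(y^s)\} = \mu_1(Q'')$, contradicting $\mu(Q'') > \mu_1(Q'')$. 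Hence $\nu(a_j) \ge (s - j)\mu(y)$ for every $j$, with equality on a ``cancellation set'' $J$ that contains $s$ and has at least two elements.

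Next I claim $j_1 := \min J = 0$. The inequality $\mu(Q'') > s\mu(y) = \mu_1(Q'')$ forces the relation
\[
\sum_{j \in J}\ini_\mu(a_j)\,\ini_\mu(y)^j = 0
\]
in the graded domain $\gr_\mu K(y)$. If $j_1 > 0$, then $\ini_\mu(y)^{j_1}$ is a nonzero factor (since $\mu(y) < \infty$), so cancellation yields $\sum_{j \in J} \ini_\mu(a_j)\,\ini_\mu(y)^{j - j_1} = 0$; lifting, the monic polynomial $g := \sum_{j \in J} a_j y^{j - j_1}$ has degree $s - j_1 < s$ and satisfies $\mu_1(g) = (s - j_1)\mu(y) < \mu(g)$, contradicting the minimality of $s$. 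Hence $0 \in J$ and $\nu(a_0) = s\mu(y)$.

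Setting $\delta := \nu(y) - \mu(y) > 0$, for each $j \ge 1$ one has $\nu(a_j y^j) = \nu(a_j) + j\nu(y) \ge (s - j)\mu(y) + j\nu(y) = s\mu(y) + j\delta > s\mu(y)$, while $\nu(a_0) = s\mu(y)$. The $\nu$-minimum of the monomials of $Q''$ is therefore uniquely attained at $j = 0$, giving $\nu(Q'') = s\mu(y)$. But then $\mu(Q'') > s\mu(y) = \nu(Q'')$, contradicting $\mu \le \nu$ on $K[y]$ (which holds by writing $f \in K[y]$ as $p/q$ with $p \in R$ and $q \in k[x]$, then combining $\mu \le \nu$ on $R$ with $\mu = \nu$ on $K$). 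Therefore $\mu = \mu_1$, so the algorithm of \S\ref{construction_complete} halts at step 1 and $I(\mu) = \{1\}$. The crux is the factorization step showing $\min J = 0$; everything else is routine bookkeeping with the explicit forms of $\mu_1$ and $\nu_1$.
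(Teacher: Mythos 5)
Your argument is correct: every step checks out, including the use of $\mu=\nu$ on $K=k(x)$, the extension of $\mu\le\nu$ from $k[x,y]$ to $K[y]$, the fact that $\gr_\mu K(y)$ is a domain so $\ini_\mu(y)^{j_1}$ can be cancelled, and the final comparison $\nu(Q'')=s\mu(y)<\mu(Q'')$. The overall strategy coincides with the paper's (a minimal-degree counterexample to $\mu=\mu_1$, contradicted by playing $\mu\le\nu$ against $\mu(y)<\nu(y)$ and the agreement of $\mu$ and $\nu$ on $K$), but the implementation of the key step differs. The paper performs a single Euclidean division $f=qy+r$ with $r\in K$: since $\mu(f)>\mu_1(f)=\min\{\mu(qy),\mu(r)\}$ forces $\mu(qy)=\mu(r)$, and $\nu(f)\ge\mu(f)>\mu(r)=\nu(r)$ forces $\nu(qy)=\nu(r)$, the contradiction $\nu(qy)=\mu(qy)$ versus $\nu(qy)\ge\mu(q)+\nu(y)>\mu(qy)$ drops out in four lines; the "constant term attains the minimum'' is automatic because the decomposition has only two blocks. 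You instead work with the full $y$-expansion and prove $0\in J$ (i.e.\ $\nu(a_0)=s\mu(y)$) by an initial-form cancellation in $\gr_\mu K(y)$ --- in effect re-deriving a special case of Proposition \ref{Form_of_KP} --- and then compute $\nu(Q'')$ exactly. What your route buys is an explicit structural statement about where the jump is created (the constant coefficient), in the shape that generalizes to later steps of the algorithm of \S\ref{construction_complete}; what it costs is extra machinery (the graded algebra and the minimality-of-$j_1$ argument) that the one-step division by $y$ renders unnecessary here.
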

\begin{proof}
Since $i_0=1$, we have $\mu(y)<\nu(y)$. It is sufficient to prove that $\mu=\mu_1$.\\

Suppose there exists $f\in K[y]$, with $\mu(f)>\mu_1(f)$. Choose $f$ of minimal degree satisfying $\mu(f)>\mu_1(f)$ and 
let $f=qy+r$ be the Euclidean division of $f$ by $y$. \\

We have $\mu(f)>\mu_1(f)=\inf\{\mu(qy),\mu(r)\}$, therefore, $\mu(f)>\mu(qy)=\mu(r)$.\\
Since $\nu(f)\geq\mu(f)$, we have $\nu(f)>\mu(r)=\nu(r)$. This implies that $\nu(f)>\nu(qy)=\nu(r)$.\\
Finally, we get $\nu(qy)=\nu(r)=\mu(r)=\mu(qy)$.
But $\nu(qy)=\nu(q)+\nu(y)>\mu(q)+\mu(y)=\mu(qy)$, and we have a contradiction.
\end{proof}

\begin{proposition}\label{equal_subvaluations}
If $i_0>1$ then for any $i\in I(\nu)$ with $i<i_0$, we have $i\in I(\mu)$, $\mu_i=\mu_{Q_i}=\nu_i$, $\beta_{i}(\mu)=\beta_{i}(\nu)$ and $d_i(\mu)=d_{i}(\nu)$.
\end{proposition}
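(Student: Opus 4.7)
The plan is to argue by induction on $i$ in the range $1\le i\le i_0-1$. For the base case $i=1$, $1\in I(\mu)$ since $Q_1=y$ is always the initial key polynomial. The hypothesis $i_0>1$ gives $\mu(y)=\nu(y)$, and both $\mu$ and $\nu$ restrict to the $x$-adic valuation on $K=k(x)$ (being normalized with value $1$ on $x$ and centered at $k[x,y]_{(x,y)}$). Thus for any $g=\sum g_jy^j\in K[y]$, $\mu_1(g)=\min_j\{\mu(g_j)+j\mu(y)\}=\min_j\{\nu(g_j)+j\nu(y)\}=\nu_1(g)$, yielding $\mu_1=\nu_1$, $d_1(\mu)=d_1(\nu)=1$, and $\beta_1(\mu)=\mu(y)=\nu(y)=\beta_1(\nu)$.

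For the inductive step, assume the claim for all $j<i$. The key preliminary fact is that $\mu$ and $\nu$ agree on every polynomial $f$ of $y$-degree $<d_i(\nu)$: by induction $\mu_{i-1}=\nu_{i-1}$, and by the definition of $d_i(\nu)$ one has $\nu_{i-1}(f)=\nu(f)$, so the chain $\mu_{i-1}(f)\le\mu(f)\le\nu(f)=\nu_{i-1}(f)=\mu_{i-1}(f)$ collapses to equality. Since $i<i_0$, $\mu(Q_i)=\nu(Q_i)>\nu_{i-1}(Q_i)=\mu_{i-1}(Q_i)$, which shows $\mu_{i-1}<\mu$ (hence $i\in I(\mu)$) and, since $Q_i$ is monic of degree $d_i(\nu)$ with $\mu_{i-1}(Q_i)<\mu(Q_i)$, gives $d_i(\mu)\le d_i(\nu)$. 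The reverse inequality follows from the preliminary fact: any monic $f$ with $\deg_y f<d_i(\nu)$ satisfies $\mu_{i-1}(f)=\nu_{i-1}(f)=\nu(f)=\mu(f)$, so $f\notin\Phi_{\mu_{i-1}}(\mu)$. Therefore $d_i(\mu)=d_i(\nu)$ and $Q_i\in\Phi_{\mu_{i-1}}(\mu)$; in particular $\beta_i(\mu)\le\mu(Q_i)=\beta_i(\nu)$.

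The delicate step---and the main obstacle---is the reverse inequality $\beta_i(\mu)\ge\beta_i(\nu)$. The plan is to argue by contradiction: suppose $\beta_i(\mu)<\beta_i(\nu)$ and pick $Q'\in\Phi_{\mu_{i-1}}(\mu)$ with $\mu(Q')=\beta_i(\mu)$. Writing $Q'=Q_i+z$ with $\deg_y z<d_i(\nu)$, the inequality $\mu(Q')<\mu(Q_i)$ forces $\mu(z)=\mu(Q')$, and the preliminary fact yields $\nu(z)=\mu(z)<\nu(Q_i)$, whence $\nu(Q')=\nu(z)=\mu(Q')<\beta_i(\nu)$. However, $Q'$ is monic of degree $d_i(\nu)$ with $\nu_{i-1}(Q')=\mu_{i-1}(Q')<\mu(Q')\le\nu(Q')$, so $Q'\in\Phi_{i-1}(\nu)$ and $\nu(Q')\ge\beta_i(\nu)$, a contradiction. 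With $\beta_i(\mu)=\beta_i(\nu)$ established, Proposition \ref{unique_val} applied to $\mu$ (using $Q_i\in\Phi_{\mu_{i-1}}(\mu)$ with $\mu(Q_i)=\beta_i(\mu)$) gives $\mu_i=\mu_{Q_i}$; expanding a general $g\in K[y]$ as $g=\sum_j g_jQ_i^j$ with $\deg_y g_j<d_i(\nu)$ and applying the preliminary fact to the $g_j$ together with $\mu(Q_i)=\nu(Q_i)$ shows $\mu_{Q_i}=\nu_{Q_i}=\nu_i$, completing the induction.
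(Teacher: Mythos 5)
Your proof is correct and follows essentially the same route as the paper: induction on $i$, using $\mu(Q_i)=\nu(Q_i)$ for $i<i_0$ to get $d_i(\mu)=d_i(\nu)$, then ruling out $\beta_i(\mu)<\beta_i(\nu)$ by writing a hypothetical minimizer as $Q_i$ plus a lower-degree term on which $\mu$ and $\nu$ agree, and finally invoking Proposition \ref{unique_val} to conclude $\mu_i=\mu_{Q_i}=\nu_i$. The only difference is cosmetic: you isolate as a "preliminary fact" the agreement of $\mu$ and $\nu$ below degree $d_i(\nu)$, which the paper uses implicitly at the corresponding steps.
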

\begin{proof}
Since $i_0>1$, we have $\mu(y)=\nu(y)$, hence $\mu_1=\nu_1$, $\beta_{1}(\mu)=\beta_{1}(\nu)$ and $d_{1}(\mu)=d_{1}(\nu)$.\\

Take an integer $i$, $1<i<i_0$ (in particular $i\in \N$), and suppose inductively that for all $j$, $1\leq j<i$, we have $\nu_j=\mu_j$, $\beta_{j}(\mu)=\beta_{j}(\nu)$ and $d_{j}(\mu)=d_{j}(\nu)$.\\ 

We will first prove that $d_i(\mu)=d_i(\nu)$.

If $f$ is a monic polynomial with $\mu(f)>\mu_{i-1}(f)=\nu_{i-1}(f)$,
then $\nu(f)\geq \mu(f)>\nu_{i-1}(f)$, hence $d_{i}(\nu)\leq d_{i}(\mu)$.

To prove the equality, we will prove that $\mu(Q_i)>\mu_{i-1}(Q_i)$. Indeed, by definition of $i_0$ we have $\mu(Q_i)=\nu(Q_i)$ since $i<i_0$.\\
Hence $\mu(Q_i)=\nu(Q_i)>\nu_{i-1}(Q_i)=\mu_{i-1}(Q_i)$.\\
Therefore $d_{i}(\nu)= d_{i}(\mu)$.\\

Now to prove that $\beta_{i}(\mu)=\beta_{i}(\nu)$, we still have to prove that if $f$ is a monic polynomial with $\mu(f)>\mu_{i-1}(f)$ and $\deg_y(f)=d_{i}(\mu)$, then $\mu(f)\geq\mu(Q_i)$. In this case, we will have $\beta_{i}(\mu)=\mu(Q_i)$, and since by definition of $i_0$, $\mu(Q)=\nu(Q)$, we get the desired equality.\\
Let $f$ be such a polynomial. Write $f=Q_i+g$ with $\deg_y(g)<d_{i}(\mu)$.\\
If $\mu(f)<\mu(Q_i)$ then $\mu(f)=\mu(g)=\mu_{i-1}(g)=\nu_{i-1}(g)=\nu(g)$ and
$$
\nu(Q_i)\geq\mu(Q_i)>\mu(f)=\nu(g).
$$
Hence $\nu(g)=\nu(f)$ and $\nu(Q_i)>\nu(f)$.\\
We have proved that $\beta_i(\nu)>\nu(f)$ and $\nu(f)\geq\mu(f)>\mu_{i-1}(f)=\nu_{i-1}(f)$, which contradicts the definition of $\beta_{i}(\nu)$.\\

Since $Q_i$ is monic of degree $d_{i}(\mu)$ with $\mu(Q_{i})=\beta_{i}(\mu)$, we have $\mu_i=\mu_{Q_i}$. Since
$\beta_{i}(\mu)=\beta_{i}(\nu)$ then $\mu_i=\nu_i$.
\end{proof}

\begin{proposition} We have $i_0<\omega$. In other words, $i_0\in\N$.
\end{proposition}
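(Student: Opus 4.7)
The plan is to argue by contradiction: suppose $i_0 = \omega$. Then by definition of $i_0$, we have $\mu(Q_\omega) < \nu(Q_\omega) = \infty$ and $\mu(Q_i) = \nu(Q_i) < \infty$ for every $i \in \mathbb{N}$. Applying Proposition \ref{equal_subvaluations} to each $i < i_0 = \omega$ yields $\mu_i = \mu_{Q_i} = \nu_i$ for all $i \in \mathbb{N}$, and since $\mu$ majorizes each of its own truncations, $\mu \ge \nu_i$ for every $i$. In particular $\mu(Q_\omega) \ge \sup_{i \in \mathbb{N}} \nu_i(Q_\omega)$, so it will suffice to prove $\sup_i \nu_i(Q_\omega) = \infty$, as this forces $\mu(Q_\omega) = \infty$, contradicting $\mu(Q_\omega) < \infty$.

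To reach that supremum, I would first show $\beta_i \to \infty$. Since $Q_\omega$ is present we are in Case 2, so by Corollary \ref{infinite_values} the set $\Psi_i(\nu)$ is infinite for every $i$ beyond some index $i_*$; such an infinite subset has no maximum, so Proposition \ref{maximalelement} applies in its non-maximal clause and gives $\Psi_{i+1} = \Psi_i \setminus \{\beta_{i+1}\}$ together with $\beta_{i+2} > \beta_{i+1}$ for every $i \ge i_*$. The sequence $(\beta_i)_{i \ge i_*}$ is therefore strictly increasing inside the discrete group $\Gamma_{i_*}$ (Corollary \ref{discrete_group}), and is consequently unbounded.

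Next I would pass from $\beta_i \to \infty$ to $\nu_i(Q_\omega) \to \infty$. In the favourable case $\deg_y Q_\omega$ equals the eventual stable value of $d_{i+1}$, Corollary \ref{degreesandvalues} applied with $Q = Q_{i+1}$ and $Q' = Q_\omega$ gives $\nu_{i+1}(Q_\omega) = \beta_{i+1}$ as soon as that stable value is reached, and the claim follows at once. In general (e.g.\ when $Q_\omega$ is irreducible of degree $>1$ over $K$, so that its roots in an algebraic closure of $K$ are not rational over $K$ yet the finite $Q_i$'s remain of strictly smaller degree), Corollary \ref{degreesandvalues} does not apply, and I would instead analyse the $Q_i$-expansion $Q_\omega = \sum_{j=0}^s c_j^{(i)} Q_i^j$: each term with $j \ge 1$ contributes at least $\nu(c_j^{(i)}) + \beta_i$, which tends to $\infty$, while for $j = 0$ one writes $c_0^{(i)}$ as $Q_\omega$ evaluated at the root of $Q_i$ best approximating the root of $Q_\omega$ singled out by $\nu$; factoring $Q_\omega = \prod_r (y-r)$ over an algebraic closure of $K$, the approximation is $\nu$-close to exactly one root and stays at finite (uniformly bounded) $\nu$-distance from the others, so that $\nu(c_0^{(i)}) \to \infty$ too. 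This last control in the general case is the main obstacle, since it is where one must combine the distinctness of the roots of $Q_\omega$ over $\bar{K}$ with the convergence of the approximants produced by the finite key polynomials.
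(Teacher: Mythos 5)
Your reduction is exactly the paper's: assume $i_0=\omega$, use Proposition \ref{equal_subvaluations} to get $\mu_i=\nu_i$ for all $i\in\N$, note $\mu\ge\mu_i$, and derive a contradiction with $\mu(Q_\omega)<\infty$ by showing $\sup_i\nu_i(Q_\omega)=\infty$. The gap is in how you try to establish that supremum. Your route goes through $\beta_i\to\infty$ and then attempts to transfer this to $\nu_i(Q_\omega)$ via the $Q_i$-expansion, and this transfer does not go through as written: for the terms with $j\ge 1$ you assert that $\nu\bigl(c_j^{(i)}\bigr)+j\beta_i\to\infty$, but the coefficients $c_j^{(i)}$ change with $i$ and nothing bounds $\nu\bigl(c_j^{(i)}\bigr)$ from below uniformly (they lie in $K[y]$ with $K=k(x)$, so their values can be arbitrarily negative); and for $j=0$ your identification of $c_0^{(i)}$ with ``$Q_\omega$ evaluated at a root of $Q_i$'' is not correct as stated ($Q_\omega$ evaluated at a root of $Q_i$ equals the \emph{evaluation} of $c_0^{(i)}$ at that root, not $c_0^{(i)}$ itself), and the proposed control via roots in an algebraic closure would require extending $\nu$ to $\bar K$ and an approximation analysis that is neither in the paper nor carried out by you --- you yourself flag it as the main obstacle. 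Also note that Corollary \ref{degreesandvalues} only rescues the special case $\deg_yQ_\omega=d$, whereas the typical (limit key polynomial) situation has $\deg_yQ_\omega$ strictly larger than the stabilized degree.

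The missing idea is much simpler and avoids roots and the quantity $\beta_i$ altogether: prove that the nondecreasing sequence $\nu_i(Q_\omega)$ is in fact \emph{strictly} increasing. Write the Euclidean division $Q_\omega=qQ_{i+1}+r$; if $\nu_i(Q_\omega)=\nu_{i+1}(Q_\omega)$, then comparing $\nu_i$ and $\nu_{i+1}$ on $qQ_{i+1}$, $r$ and $Q_\omega$ forces $\nu(qQ_{i+1})>\nu(r)$, hence $\nu(r)=\nu(Q_\omega)=\infty$; since $\nu^{-1}(\infty)=(Q_\omega)$ and $\deg_yr<\deg_yQ_{i+1}\le\deg_yQ_\omega$, this gives $r=0$ and $q=1$, i.e.\ $Q_{i+1}=Q_\omega$, a contradiction. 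Since all the values $\nu_i(Q_\omega)$ lie in the single discrete group $\Gamma_{i_1}$ furnished by Corollary \ref{discrete_group} (this is where Case 2 is used), a strictly increasing sequence there is unbounded, so $\mu(Q_\omega)\ge\sup_i\nu_i(Q_\omega)=\infty$, contradicting $\mu(Q_\omega)<\nu(Q_\omega)$. With your step 2 replaced by this monotonicity-plus-discreteness argument, your proof coincides with the paper's.
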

\begin{proof}
Suppose that $\omega\in I(\nu)$ and $i_0=\omega$.\\

By Remark \ref{invariants} (3) and Corollary \ref{discrete_group}, there exists $i_1\in\N$ such that $\beta_{i}(\nu)\in \Gamma_{i_1}$ for all $i\geq i_1$, with $\Gamma_{i_1}=\Z+\beta_1\Z+\dots+\beta_{i_1}\Z\subset \R$. Hence for all $i\in \N$, we have
$\nu_i(Q_\omega)\in \Gamma_{i_1}$.

Let us show that
$$
\nu_1(Q_\omega)<\nu_2(Q_\omega)<\dots<\nu(Q_\omega)=\infty.
$$
Indeed, assume that there exists $i\in I(\nu)\setminus\{\omega\}$, such that $\nu_i(Q_\omega)=\nu_{i+1}(Q_{\omega})$, aiming for contradiction. Write $Q_{\omega}=qQ_{i+1}+r$ the Euclidean Division of $Q_{\omega}$ by $Q_{i+1}$. We have
$\nu_{i+1}(qQ_{i+1})>\nu_{i}(qQ_{i+1})\geq \inf\{\nu_{i}(Q_{\omega}), \nu_{i}(r)\}=
\inf\{\nu_{i+1}(Q_{\omega}), \nu_{i+1}(r)\}$, hence
$$
\nu_{i+1}(qQ_{i+1})>\nu_{i+1}(Q_{\omega}), \nu_{i+1}(r).
$$
This implies that $\nu(qQ_{i+1})>\nu(r)$. Hence $\nu(Q_{\omega})=\nu(r)$ and $\nu(r)=\infty$, then $r$ must be equal to $0$ and $q=1$, since $\nu^{-1}(\infty)=(\Q_{\omega})$. But $Q_{i+1}\neq Q_{\omega}$ and we have a contradiction.\\

By Proposition \ref{equal_subvaluations} we have $\nu_i=\mu_i$, hence
$$
\mu_1(Q_\omega)<\mu_2(Q_\omega)<\dots<\mu(Q_\omega).
$$
We have a strictly increasing sequence in $\Gamma_{i_1}$, it most be unbouded in $\R$, hence $\mu(Q_\omega)=\infty$. This contradicts the fact that $\mu(Q_\omega)<\nu(Q_\omega)$.
\end{proof}

\begin{proposition}
Either $\mu=\nu_{i_0-1}$ or $\mu=\mu_{i_0}<\nu_{i_0}$.
\end{proposition}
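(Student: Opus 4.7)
The plan is to reduce to a dichotomy based on comparing $\mu(Q_{i_0})$ with $\nu_{i_0-1}(Q_{i_0})$. First, if $i_0=1$, then Proposition \ref{first_case} gives $\mu=\mu_1<\nu_1$ directly, realizing the second alternative with $\mu_{i_0}=\mu_{Q_1}$. So assume $i_0\ge 2$. Proposition \ref{equal_subvaluations} then gives $\mu_{i_0-1}=\nu_{i_0-1}$, so in particular $\mu\ge\nu_{i_0-1}$ (every valuation dominates its own truncation), hence $\mu(Q_{i_0})\ge\nu_{i_0-1}(Q_{i_0})$. Also $\mu(Q_{i_0})<\nu(Q_{i_0})=\beta_{i_0}(\nu)$ by definition of $i_0$.

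In the equality case $\mu(Q_{i_0})=\nu_{i_0-1}(Q_{i_0})$, I would prove $\mu=\nu_{i_0-1}$ by minimal counterexample. Suppose $f\in K[y]$ has minimal degree with $\mu(f)>\nu_{i_0-1}(f)$. Since $\nu_{i_0-1}(h)=\nu(h)$ for $\deg h<d_{i_0}(\nu)$ by Proposition \ref{complete_deg}, and $\mu\le\nu$, such an $f$ satisfies $\deg f\ge d_{i_0}(\nu)$. Writing $f=qQ_{i_0}+r$ with $\deg r<d_{i_0}(\nu)$, minimality gives $\mu(q)=\nu_{i_0-1}(q)$ and $\mu(r)=\nu_{i_0-1}(r)=\nu(r)$; combined with the hypothesis of this case, $\mu(qQ_{i_0})=\nu_{i_0-1}(qQ_{i_0})$. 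When $\mu(qQ_{i_0})\ne\mu(r)$ the argument closes immediately by strict minimum of a valuation. In the cancellation regime $\mu(qQ_{i_0})=\mu(r)$, one notes $\nu(qQ_{i_0})=\nu(q)+\beta_{i_0}(\nu)>\nu_{i_0-1}(qQ_{i_0})=\nu(r)$, so $\nu(f)=\nu(r)$; sandwiching $\mu(r)\le\mu(f)\le\nu(f)=\mu(r)$ yields $\mu(f)=\mu(r)\le\nu_{i_0-1}(f)$, contradicting the choice of $f$.

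In the strict case $\mu(Q_{i_0})>\nu_{i_0-1}(Q_{i_0})$, I first observe that $Q_{i_0}$ has minimal degree among $P\in K[y]$ with $\nu_{i_0-1}(P)<\mu(P)$ (any smaller-degree $P$ satisfies $\nu_{i_0-1}(P)=\nu(P)\ge\mu(P)$), so Vaqui\'e's augmented valuation $[\nu_{i_0-1},Q_{i_0},\mu(Q_{i_0})]$ is well-defined. Using $\mu=\nu_{i_0-1}$ on polynomials of degree $<d_{i_0}(\nu)$, a term-by-term comparison of $Q_{i_0}$-expansions identifies this augmented valuation with the truncation $\mu_{Q_{i_0}}$, which is thereby a valuation. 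A second minimal-counterexample argument, parallel to the previous case but with $\nu_{i_0-1}$ replaced by $\mu_{Q_{i_0}}$, yields $\mu=\mu_{Q_{i_0}}=:\mu_{i_0}$; finally $\mu_{i_0}(Q_{i_0})=\mu(Q_{i_0})<\beta_{i_0}(\nu)=\nu_{i_0}(Q_{i_0})$ gives $\mu_{i_0}<\nu_{i_0}$.

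The main obstacle is the cancellation regime in each Euclidean-division argument: when $\mu(qQ_{i_0})=\mu(r)$, the usual strict-minimum property of a valuation is lost, and the naive bound $\mu(f)\ge\min(\mu(qQ_{i_0}),\mu(r))$ is not tight. The key is to exploit the strict inequality $\nu(Q_{i_0})>\mu(Q_{i_0})$ (available precisely because $i_0$ is defined by this strict gap) to force $\nu(qQ_{i_0})>\nu(r)$, thereby obtaining $\nu(f)=\nu(r)$, and then squeeze $\mu(f)$ between $\mu(r)$ from below and $\nu(f)=\mu(r)$ from above.
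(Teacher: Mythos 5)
Your argument is correct and, in substance, it is the paper's own proof: the same dichotomy according to whether $\mu(Q_{i_0})$ equals or strictly exceeds $\mu_{i_0-1}(Q_{i_0})=\nu_{i_0-1}(Q_{i_0})$, the same Euclidean-division-plus-minimal-counterexample scheme in each branch, and the same way of breaking the cancellation regime by playing the strict gaps $\nu(Q_{i_0})>\nu_{i_0-1}(Q_{i_0})$ and $\nu(Q_{i_0})>\mu(Q_{i_0})$ against the equality of $\mu$, $\nu$ and $\nu_{i_0-1}$ on polynomials of degree less than $d_{i_0}(\nu)$. Your separate treatment of $i_0=1$ via Proposition \ref{first_case} is consistent with the paper.

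The one place where you deviate is not purely notational: you \emph{define} $\mu_{i_0}:=\mu_{Q_{i_0}}$, whereas in the paper $\mu_{i_0}$ already has a meaning, namely the $i_0$-th truncation in the canonical key-polynomial construction for $\mu$, i.e. the truncation with respect to a polynomial of \emph{minimal value} $\beta_{i_0}(\mu)$ in $\Phi_{i_0-1}(\mu)$ (this is well defined by Proposition \ref{unique_val}). To identify $\mu_{Q_{i_0}}$ with that object one must check $\mu(Q_{i_0})=\beta_{i_0}(\mu)$; the paper proves this, and you omit it. The gap is small and easily filled: if $Q=Q_{i_0}+g$ were monic of degree $d_{i_0}(\mu)$ with $\mu_{i_0-1}(Q)<\mu(Q)<\mu(Q_{i_0})$, then $\mu(Q)=\mu(g)=\nu(g)$, hence $\nu(Q)=\nu(g)<\mu(Q_{i_0})<\nu(Q_{i_0})=\beta_{i_0}(\nu)$, while $\nu_{i_0-1}(Q)=\mu_{i_0-1}(Q)<\mu(Q)\le\nu(Q)$ shows $Q\in\Phi_{i_0-1}(\nu)$, contradicting the minimality of $\beta_{i_0}(\nu)$. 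You should include this check (it is also what is needed downstream, e.g. in Remark \ref{structure_mu} and Theorem \ref{infimum}, to know that $Q_{i_0}$ is a legitimate $i_0$-th key polynomial for $\mu$); with it, your proof is complete. The augmented-valuation detour $[\nu_{i_0-1},Q_{i_0},\mu(Q_{i_0})]$ you use to see that $\mu_{Q_{i_0}}$ is a valuation is a harmless variant of the paper's appeal to Proposition \ref{truncations_value_groups}, and your final step $\mu_{i_0}<\nu_{i_0}$ should be phrased as a termwise comparison of the two truncations on $Q_{i_0}$-expansions (agreement on coefficients of degree $<d_{i_0}(\nu)$ plus $\mu(Q_{i_0})<\nu(Q_{i_0})$), not merely as a comparison of values at the single polynomial $Q_{i_0}$.
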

\begin{proof}
As in the proof of Proposition \ref{equal_subvaluations}, we have $d_{i_0}(\nu)\leq d_{i_0}(\mu)$.

Suppose first that $\mu(Q_{i_0})=\mu_{i_0-1}(Q_{i_0})$. We will prove that in this case we have $\mu=\nu_{i_0-1}$.\\

Suppose, aiming for contradiction, that there exists $f\in K[y]$ such that $\mu(f)>\mu_{i_0-1}(f)$, and choose $f$ of minimal degree among all the polynomials having this property. Since
$$
d_{i_0}(\nu)\leq d_{i_0}(\mu),
$$
we have $\deg_{y}(f)\geq d_{i_0}(\nu)$.\\

Let $f=qQ_{i_0}+r$ be the Euclidean division of $f$ by $Q_{i_0}$. By the minimality of $\deg\ f$, we have $\mu(q)=\mu_{i_0-1}(q)$ and $\mu(r)=\mu_{i_0-1}(r)$.\\

We have $\nu_{i_0-1}(r)=\nu(r)\geq inf\{\nu(f),\nu(qQ_{i_0})\}> inf\{\nu_{i_0-1}(f),\nu_{i_0-1}(qQ_{i_0})\}$.\\
Hence $\nu_{i_0-1}(r)>\nu_{i_0-1}(f)=\nu_{i_0-1}(qQ_{i_0})$.\\
But $\mu(qQ_{i_0})=\nu_{i_0-1}(qQ_{i_0})$ and $\mu(f)>\nu_{i_0-1}(f)$, hence $\mu(f)>\mu(qQ_{i_0})=\mu(r)$, therefore
$\nu_{i_0-1}(qQ_{i_0})=\mu(qQ_{i_0})=\mu(r)=\nu_{i_0-1}(r)$ which is a contradiction.\\

Now suppose that $\mu(Q_{i_0})>\mu_{i_0-1}(Q_{i_0})$. We have $d_{i_0}(\mu)=d_{i_0}(\nu)$. We will prove that
$\mu(Q_{i_0})=\beta_{i_0}(\mu)$. Suppose that there exists a monic polynomial $Q$ such that $\deg_{y}(Q)=d_{i_0}(\mu)$,
$\mu_{i_0-1}(Q)<\mu(Q)$ and $\mu(Q)<\mu(Q_{i_0})$.\\

Write $Q=Q_{i_0}+g$ with $\deg_{y}(g)<d_{i_0}(\mu)$. We have $\mu(Q_{i_0})>\mu(Q)=\mu(g)$. Therefore
$\nu(Q_{i_0})>\mu(Q_{i_0})=\mu(g)=\nu(g)$. Hence $\nu(Q_{i_0})>\nu(g)=\nu(Q)$, in particular,
\[
\beta_{i_0}(\nu)=\nu(Q_{i_0})>\nu(Q)
\]
which contradicts the definition of $\beta_{i_0}(\nu)$.\\

We have $\beta_{i_0}(\mu)=\mu(Q_{i_0})$, $d_{i_0}(\mu)=d_{i_0}(\nu)$ and $\mu_{i_0}=\mu_{Q_{i_0}}$. It remains to prove that $\mu=\mu_{i_0}$.\\

Take any polynomial $f$ in $K[y]$. If $\deg_y(f)<d_{i_0}(\mu)$ then $\mu_{i_0}(f)=\mu(f)$. Suppose that
$\deg_y(f)\ge d_{i_0}(\mu)$ and let $f=qQ_{i_0}+r$ be the Euclidean division of $f$ by $Q_{i_0}$.\\

If $\mu(f)>\mu_{i_0}(f)$ then $\mu(f)>\mu(qQ_{i_0})=\mu(r)$. But $\nu(f)\geq \mu(f)$ and $\nu(r)=\mu(r)$, therefore
$\nu(f)>\nu(r)=\nu(qQ_{i_0})$. Then $\nu(qQ_{i_0})=\mu(qQ_{i_0})$ which is impossible, hence $\mu(f)=\mu_{i_0}(f)$.
\end{proof}

\begin{corollary}\label{maximal} The valuations $\nu$ with $N(\nu)=\infty$ or ($N(\nu)\neq \infty$ and $\beta_{N(\nu)}(\nu)=\infty$) are maximal elements of the set of valuations $\mu$ of $K(y)$ with $\mu(x)=1$.  
\end{corollary}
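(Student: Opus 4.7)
The approach is to argue by contradiction using the structural dictionary for pairs $\mu<\nu'$ set up by Propositions \ref{first_case}, \ref{equal_subvaluations}, and the two subsequent propositions of this subsection. Suppose $\nu$ is not maximal: then there exists a valuation $\nu'$ of $K(y)$ with $\nu'(x)=1$ and $\nu<\nu'$. I would apply those propositions with the roles $\mu:=\nu$ and $\nu:=\nu'$. Writing $\{Q'_i\}_{i\in I(\nu')}$ for a complete set of key polynomials of $\nu'$ and $\nu'_i$ for the corresponding truncations, they produce a finite index $i_0\in\N$ such that $\nu(Q'_i)=\nu'(Q'_i)$ for all $i<i_0$ and $\nu(Q'_{i_0})<\nu'(Q'_{i_0})$.

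The last proposition above now yields exactly two alternatives: either $\nu=\nu'_{i_0-1}$ (in particular $i_0\ge 2$), or $\nu=\nu_{Q'_{i_0}}$ with $\nu<\nu'_{i_0}$. The plan is to show that each alternative forces $N(\nu)<\infty$ \emph{and} $\beta_{N(\nu)}(\nu)<\infty$, contradicting the hypothesis. In the first alternative, Proposition \ref{equal_subvaluations} identifies the truncations $\nu_j=\nu'_j$ for $j<i_0$, so $Q'_1,\dots,Q'_{i_0-1}$ may be taken as the successive key polynomials of $\nu$; the assumption $\nu=\nu'_{i_0-1}$ means the construction of \S\ref{construction_complete} for $\nu$ terminates at step $i_0-1$, whence $N(\nu)=i_0-1$ and $\beta_{N(\nu)}(\nu)=\nu'(Q'_{i_0-1})<\infty$ (finite because $i_0-1<i_0$ lies strictly below any possible $\omega$-index of $\nu'$).

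In the second alternative, the last proposition gives $d_{i_0}(\nu)=d_{i_0}(\nu')$, $\beta_{i_0}(\nu)=\nu(Q'_{i_0})$, and exhibits $Q'_{i_0}$ as a legitimate choice for the $i_0$-th key polynomial of $\nu$; combined with $\nu=\nu_{Q'_{i_0}}$ this means the algorithm for $\nu$ halts at step $i_0$, so $N(\nu)=i_0<\infty$ and $\beta_{N(\nu)}(\nu)=\nu(Q'_{i_0})<\nu'(Q'_{i_0})\le\infty$, which is in particular finite. Thus in both alternatives the hypothesis of the corollary is violated.

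The main point of care will be the identification of $Q'_1,\dots,Q'_{i_0-1}$ (respectively $Q'_1,\dots,Q'_{i_0}$) as a complete set of key polynomials for $\nu$ that actually terminates at the expected step: one must check both the defining minimality properties and the fact that no further key polynomial can be extracted. This is precisely the content of the preceding propositions combined with the remark following Proposition \ref{complete_deg} on the uniqueness of the invariants $d_i(\nu)$, $\beta_i(\nu)$, $\nu_i$ associated to $\nu$.
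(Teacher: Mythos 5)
Your proposal is correct and is essentially the paper's own (implicit) argument: the corollary is stated without a separate proof precisely because Propositions \ref{first_case}, \ref{equal_subvaluations} and the two propositions following them show that any valuation admitting a strict majorant $\nu'$ with $\nu'(x)=1$ must satisfy $N(\nu)<\infty$ and $\beta_{N(\nu)}(\nu)<\infty$, which is exactly the contrapositive you spell out. One small repair: in your first alternative, the finiteness of $\beta_{i_0-1}(\nu')$ does not follow merely from $i_0-1$ lying below a possible $\omega$-index (a last key polynomial of \emph{finite} index can have infinite value --- that is the corollary's second hypothesis); it follows because $i_0\in I(\nu')$ and the values $\beta_i(\nu')$ are strictly increasing (Propositions \ref{Form_of_KP} and \ref{maximalelement}), so only the final value of $\nu'$ could be infinite.
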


From the preceding results we also deduce
\begin{remark}\label{structure_mu}
\begin{enumerate}
\item $N(\mu)\leq N(\nu)$ and $D(\mu)\leq D(\nu)$.
\item Either $\mu$ is the $y$-adic valuation with $\mu(y)<\nu(y)$, or there exists $i\in I(\nu)$ such that for each $j\leq i$,
$\mu_j=\nu_j$, $I(\mu)=\{1,\dots,i+1\}$, $\{Q_{j}\}_{j\in I(\mu)}$ is a complete set of key polynomials for $\mu$ and
$\mu=[\nu_i,Q_{i+1},\mu(Q_{i+1})]$.
\item $N(\mu)=N<\infty$ and $\mu$ and $\nu$ have the same sets of first $N$ key polynomials. More precisely, any set
$\{Q_i\}_{i\in\{1,\dots,N\}}$ of first $N$ key polynomials for $\mu$ is also a set of first $N$ key polynomials for $\nu$ and vice versa.
\end{enumerate}
\end{remark}

\subsection{Structure Theorems}
\begin{theorem}\label{infimum}
Let $\tilde{\mu}$ and $\tilde{\nu}$ be two valuations in $\V$. Then there exists an infimum of $\tilde{\mu}$ and $\tilde{\nu}$ (that is, the greatest element that is less than or equal to $\tilde{\mu}$ and $\tilde{\nu}$) in the poset $\V$.
\end{theorem}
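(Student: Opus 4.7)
The plan is to pass from $R$ to $k[x,y]_{(x,y)}$ via Theorem \ref{bijection} and construct $\inf(\tilde\mu,\tilde\nu)$ by matching the complete key polynomial sequences of $\mu$ and $\nu$ along their common initial segment. First I select a regular system of parameters $(x,y)$ of $R$ with $\tilde\mu(x)=\tilde\nu(x)=1$: the sets $I_{\tilde\mu}:=\{f\in\m\mid\tilde\mu(f)>1\}$ and $I_{\tilde\nu}:=\{f\in\m\mid\tilde\nu(f)>1\}$ are ideals of $R$ (closed under addition by $(V_1)$ and under multiplication by $R$), strictly contained in $\m$ because $\tilde\mu(\m)=\tilde\nu(\m)=1$; prime avoidance for two ideals yields an $x\in\m\setminus(I_{\tilde\mu}\cup I_{\tilde\nu})$, and since $\tilde\mu(\m^2)\ge 2$ this $x$ lies outside $\m^2$ and can be completed to a regular system of parameters of $R$. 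By Theorem \ref{bijection}, $\tilde\mu$ and $\tilde\nu$ correspond to valuations $\mu,\nu$ of $K(y)$ centered at $k[x,y]_{(x,y)}$, where $K:=k(x)$, and both restrict to the $x$-adic valuation $\nu_K$ on $K$. I construct complete sets of key polynomials $\{P_i\}_{i\in I(\mu)}$ and $\{Q_j\}_{j\in I(\nu)}$ following Section \ref{construction_complete}, together with truncations $\mu_i,\nu_j$ and values $\beta_i(\mu),\beta_j(\nu)$.

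\textbf{Construction of the infimum.} Let $N\ge 0$ be the largest integer such that the first $N$ key polynomials can be chosen to satisfy $P_i=Q_i$ and $\beta_i(\mu)=\beta_i(\nu)$ for every $1\le i\le N$ (for $N=0$, I take $\lambda_0$ to be the Gauss extension of $\nu_K$ to $K(y)$; for $N\ge 1$, put $\lambda_0:=\mu_N=\nu_N$). If $\mu=\lambda_0$ or $\nu=\lambda_0$, that valuation is the infimum. Otherwise, at step $N+1$ there are two cases. Either the minimal degrees $d_{N+1}(\mu)$ and $d_{N+1}(\nu)$ differ, or no common monic $Q\in\Phi_{\lambda_0}(\mu)\cap\Phi_{\lambda_0}(\nu)$ of the appropriate degree can be selected; in these cases I set $\lambda:=\lambda_0$. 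Or else one can select a common $Q\in\Phi_{\lambda_0}(\mu)\cap\Phi_{\lambda_0}(\nu)$ of the common degree, and the maximality of $N$ forces $\beta_{N+1}(\mu)\ne\beta_{N+1}(\nu)$; I then set
\[
\lambda:=\bigl[\lambda_0,\,Q,\,\min(\beta_{N+1}(\mu),\beta_{N+1}(\nu))\bigr].
\]

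\textbf{Verification.} That $\lambda\le\mu$ and $\lambda\le\nu$ is immediate from the definition of an augmented valuation (see \cite{V1}), and Proposition \ref{unique_val} ensures that in the second case the construction is independent of the choice of $Q$. For maximality, let $\eta\in\V$ satisfy $\eta\le\mu,\nu$ and view $\eta$ as a valuation of $K(y)$. Applying Remark \ref{structure_mu} to the pairs $(\eta,\mu)$ and $(\eta,\nu)$ shows the key polynomial data of $\eta$ coincides with that of $\mu$ and of $\nu$ up to some common index $M$; by the definition of $N$ this forces $M\le N$, and if $\eta$ has an $(M+1)$-st augmentation then its value is bounded above by both $\beta_{M+1}(\mu)$ and $\beta_{M+1}(\nu)$. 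Consequently $\eta\le\lambda$. Pulling $\lambda$ back through Theorem \ref{bijection} yields $\tilde\lambda=\inf(\tilde\mu,\tilde\nu)$ in $\V$. The hardest step will be the case analysis at level $N+1$: showing that when a common $Q$ exists the value $\min(\beta_{N+1}(\mu),\beta_{N+1}(\nu))$ is the largest augmentation value compatible with $\lambda\le\mu,\nu$, and that the resulting augmented valuation is independent of $Q$ --- an analogue of Proposition \ref{unique_val} in this mixed setting, requiring term-by-term comparison of $Q$-expansions against the truncations $\mu_{N+1}$ and $\nu_{N+1}$.
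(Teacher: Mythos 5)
Your overall plan --- pass to $k(x,y)$ via Theorem \ref{bijection}, run the key-polynomial constructions for $\mu$ and $\nu$ in parallel, and take for the infimum either the last common stage or a one-step augmentation with the smaller value --- is the same as the paper's, and your coordinate choice and the verification that $\lambda\le\mu,\nu$ are fine. The gap is in how you locate the branch point and in the maximality argument. The paper compares the \emph{canonical truncations} $\mu_i,\nu_i$ (choice-independent by Proposition \ref{unique_val}), sets $i_0=\inf\{i\mid\mu_i\ne\nu_i\}$, and splits according to whether $\mu_{i_0}$ and $\nu_{i_0}$ are comparable; you instead compare \emph{choices} of key polynomials, and your claim that maximality of $N$ forces $\beta_{N+1}(\mu)\ne\beta_{N+1}(\nu)$ is false. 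Over a small residue field one can have $d_{N+1}(\mu)=d_{N+1}(\nu)$, $\beta_{N+1}(\mu)=\beta_{N+1}(\nu)$ and even $\mu_{N+1}=\nu_{N+1}$, while no single polynomial attains the minimal value for both. Concretely, take $k=\mathbb{F}_2$, let $\mu$ be the monomial valuation in the coordinates $(x,\,y+x+x^2)$ with weights $(1,3)$ and $\nu$ the monomial valuation in $(x,\,y+x)$ with weights $(1,3)$: then $\mu(y)=\nu(y)=1$, $d_2(\mu)=d_2(\nu)=1$, $\beta_2(\mu)=\beta_2(\nu)=2$ and $\mu_2=\nu_2$, but the degree-one polynomials of value $2$ for $\mu$ are the $y+x+c_3x^3+\cdots$ (no $x^2$ term) while those for $\nu$ are the $y+x+x^2+\cdots$, so the two sets of admissible second key polynomials are disjoint and your $N$ equals $1$, whereas the truncations first disagree at $i_0=3$.

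This mislocation also invalidates the maximality check. Applying Remark \ref{structure_mu} separately to $(\eta,\mu)$ and $(\eta,\nu)$ only shows that $\eta$ agrees with the data of $\mu$ up to some index and with that of $\nu$ up to some index, via possibly \emph{different} key polynomials; the inference ``by the definition of $N$ this forces $M\le N$'' therefore does not follow, and indeed fails in the example above, where the infimum $\mu_2=\nu_2$ itself agrees with each of $\mu,\nu$ up to index $2>N$. Moreover, in your augmentation branch you attach the value $\min(\beta_{N+1}(\mu),\beta_{N+1}(\nu))$ to an arbitrary $Q\in\Phi_{\lambda_0}(\mu)\cap\Phi_{\lambda_0}(\nu)$, which need not attain either minimum, and you explicitly defer the two statements (independence of the choice of $Q$, and that no common lower bound exceeds $\lambda$) that constitute the actual proof. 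The paper supplies exactly these points: the case split is made on comparability of $\mu_{i_0}$ and $\nu_{i_0}$, the candidate infimum is $\min(\mu_{i_0},\nu_{i_0})$ in the comparable case and $\mu_{i_0-1}=\nu_{i_0-1}$ in the incomparable case, and maximality is proved by observing that a common lower bound $\nu'$ with $N(\nu')=i_0$ would, by Remark \ref{structure_mu}(3), be an augmentation of $\nu_{i_0-1}$ along a single polynomial $Q_{i_0}$ serving both $\mu$ and $\nu$, which would make $\mu_{i_0}$ and $\nu_{i_0}$ comparable --- a contradiction. To repair your argument, redefine $N$ as $i_0-1$ with $i_0=\inf\{i\mid\mu_i\ne\nu_i\}$ and argue along these lines (using Proposition \ref{unique_val} and Corollary \ref{degreesandvalues}) rather than through the existence of a common key polynomial.
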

\begin{proof}
Fix local coordinates $x$ and $y$ such that $\tilde{\mu}(x)=\tilde{\nu}(x)=1$. Let $\mu$ and $\nu$ be the corresponding valuations on $k(x,y)$ under the correspondence in Theorem (\ref{bijection}).\\

To prove the Theorem, we will prove that the infimum of $\mu$ and $\nu$ exists.\\

First we will define a valuation $\mu\wedge\nu$ and then prove that it is the infimum of $\mu$ and $\nu$.\\

Let $\{\nu_i\}_{i\in I(\nu)}$ and $\{\mu_i\}_{i\in I(\mu)}$ be the truncations associated to $\nu$ and $\mu$ respectively.\\

Suppose first that for each $i\in I(\nu)\cap I(\mu)$ we have $\nu_i=\mu_i$. If $I(\mu)\subseteq I(\nu)$ then $\mu\leq\nu$ and $\mu\wedge\nu=\mu$, otherwise, if $I(\mu)\subset I(\nu)$ then $\nu<\mu$ and $\mu\wedge\nu=\nu$.\\

Now suppose that there exists $i\in I(\nu)\cap I(\mu)$ such that $\nu_i\ne\mu_i$. Let
\[
i_0=inf\{i\in I(\nu)\cap I(\mu)\mid \nu_i\neq\mu_i\}.
\]
Suppose first that $\nu_{i_0}$ and $\mu_{i_0}$ are comparable. Without loss of generality, we may assume that $\mu_{i_0}<\nu_{i_0}$. In this case put $\mu\wedge\nu=\mu_{i_0}$. 
Clearly $\mu_{i_0}\leq \nu$ and $\mu_{i_0}\leq \nu$.\\

Let $\nu'$ be a valuation of $k(x,y)$ such that $\nu'\leq \mu$ and $\nu'\leq \nu$. Since $\nu'\leq \nu$, we have $\nu'(x)=1$.\\
We know from Remark \ref{structure_mu} (1) that $N(\nu')<N(\mu)$ and $N(\nu')<N(\nu)$. Let $\{\nu'_i\}_{i\leq N(\nu')}$ be the truncations associated to $\nu'$.\\
From  Remark \ref{structure_mu} (2) we know that for each $i<N(\nu')$ we have $\nu'_{i}=\mu_i$ and $\nu'_{i}=\nu_i$, therefore $N(\nu')\leq i_0$.\\
We have $\nu'=\nu'_{N(\nu')}\leq \mu_{N(\nu')}\leq\mu_{i_0}=\mu\wedge\nu$.\\

Next, suppose that $\nu_{i_0}$ and $\mu_{i_0}$ are not comparable. In particular, we have $i_0>1$ (since $\mu_1$ and $\nu_1$ are always comparable). Put $\mu\wedge\nu=\mu_{i_0-1}=\nu_{i_0-1}$. Choose $\nu'$ as in the paragraph above and let
$\{\nu'_i\}_{i\leq N(\nu')}$ be the truncations associated to $\nu'$. By Remark \ref{structure_mu} (3) the valuations $\nu'$ and
$\nu$ have the same set of $N(\nu')$ key polynomials, and the valuations  $\nu'$ and $\mu$ have the same set of $N(\nu')$ key polynomials. Therefore if, $N(\nu')=i_0$, we would have $\nu_{i_0}=[\nu_{i_0-1}, Q_{i_0}, \beta_{i_0}(\nu)]$ and 
$\mu_{i_0}=[\mu_{i_0-1}, Q_{i_0}, \beta_{i_0}(\mu)]$. The latter two valuations are comparable, hence $N(\nu')\leq i_0-1$. We have
$\nu'=\nu'_{N(\nu')}\leq \mu_{N(\nu')}\leq\mu_{i_0-1}=\mu\wedge\nu$.
\end{proof}

\begin{theorem}\label{majorant}
Let $\tilde{S}$ be a totally ordered convex subset of $\V$. Then $\tilde{S}$ has a majorant in $\V$.
\end{theorem}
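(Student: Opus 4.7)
The plan is to construct the majorant as the pointwise supremum of the chain. If $\tilde S$ already has a maximum we are done, so I assume it does not. Via Theorem~\ref{bijection}, pass to the corresponding totally ordered chain $S$ of valuations of $k(x,y)$ centered at $k[x,y]_{(x,y)}$; it suffices to produce a majorant there. For every $f \in k[x,y]\setminus\{0\}$ put
\[
\bar\nu(f):=\sup_{\nu\in S}\nu(f)\in\bar\R,
\]
and extend to $k(x,y)$ by $\bar\nu(a/b)=\bar\nu(a)-\bar\nu(b)$, reverting if necessary to the Krull-valuation interpretation of Section~\ref{basics} should $\bar\nu^{-1}(\infty)\cap k[x,y]$ turn out to be a nonzero prime.

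Since $S$ is totally ordered, for each $f$ the net $\nu\mapsto\nu(f)$ is monotone non-decreasing in $\bar\R$. A short argument using totality of the order gives the identities
\[
\sup_\nu\bigl(\nu(f)+\nu(g)\bigr)=\sup_\nu\nu(f)+\sup_\nu\nu(g),\qquad \sup_\nu\min\bigl(\nu(f),\nu(g)\bigr)=\min\bigl(\sup_\nu\nu(f),\sup_\nu\nu(g)\bigr),
\]
from which axioms $(V_1)$ and $(V_2)$ follow immediately. Centering at $k[x,y]_{(x,y)}$ is clear: each $\nu\in S$ is non-negative on $k[x,y]_{(x,y)}$ and at least $1$ on its maximal ideal, so $\bar\nu$ inherits these properties. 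The majorant inequality $\bar\nu\ge\nu$ for every $\nu\in S$ is immediate from the definition.

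The crucial point is that $\bar\nu$ is \emph{proper}, i.e.\ not identically $\infty$ on the maximal ideal. I would argue directly that either $\bar\nu(x)=1$ or $\bar\nu(y)=1$. Because every $\nu\in S$ is normalized, $\min(\nu(x),\nu(y))=1$, so each $\nu$ lies in exactly one of three cases: (a) $\nu(x)=1<\nu(y)$, (b) $\nu(y)=1<\nu(x)$, (c) $\nu(x)=\nu(y)=1$. First, the chain $S$ cannot contain both a valuation $\nu_1$ of type (a) and a valuation $\nu_2$ of type (b): comparing them, say $\nu_1\le\nu_2$, gives $\nu_2(y)\ge\nu_1(y)>1$, contradicting $\nu_2(y)=1$. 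Second, if some $\nu_0\in S$ is of type (a), then every $\nu\ge\nu_0$ in $S$ satisfies $\nu(y)\ge\nu_0(y)>1$, forcing $\nu(x)=1$ by normalization, and every $\nu'\le\nu_0$ satisfies $1\le\nu'(x)\le\nu_0(x)=1$. Hence $\nu(x)=1$ for every $\nu\in S$, so $\bar\nu(x)=1$; symmetrically if some $\nu_0\in S$ is of type (b). If every $\nu\in S$ is of type (c), then $\bar\nu(x)=\bar\nu(y)=1$. In all cases $\bar\nu$ is proper, defines an element of $\V$ by Theorem~\ref{bijection}, and majorizes $\tilde S$.

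The main obstacle is precisely the properness step: one must rule out the a priori possibility that both $\bar\nu(x)$ and $\bar\nu(y)$ become infinite in the limit, and it is here that the normalization convention and the two-dimensional regularity of $R$ play their role. Note that the convexity hypothesis on $\tilde S$ is not used in the argument — total order alone suffices — and the full apparatus of key polynomials is not needed for this particular theorem.
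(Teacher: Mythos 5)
Your construction is correct, but it follows a genuinely different route from the paper. You build the majorant as the pointwise supremum $\bar\nu(f)=\sup_{\nu\in S}\nu(f)$ over the chain; since the chain is directed and $\nu\mapsto\nu(f)$ is monotone (all values on $k[x,y]$ lying in $[0,\infty]$, so no $\infty-\infty$ ambiguity), the two supremum identities you invoke do hold and give $(V_1)$ and $(V_2)$, and your normalization argument correctly shows $\bar\nu(x)=1$ or $\bar\nu(y)=1$, hence properness; when $\bar\nu^{-1}(\infty)\cap k[x,y]\neq\{0\}$ one should add that this set is a prime strictly contained in $(x,y)$ (it misses $x$ or $y$), hence principal, so $\bar\nu$ is a valuation with infinite values exactly of the type admitted in \S\ref{basics}. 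This is essentially the short elementary argument of Lemma 3.9 (i) of \cite{N}, which the paper explicitly acknowledges in the remark following Theorem \ref{majorant}; note also that your detour through Theorem \ref{bijection} is harmless but unnecessary, since the supremum can be taken directly on $R$, and, as you observe, convexity is not needed. The paper instead proves the theorem with the key-polynomial machinery of \S\ref{com_key_pol}--\S\ref{order_val}: it extracts a common complete system of key polynomials for all members of $S$ (Remark \ref{structure_mu}, Corollary \ref{maximal}) and splits into cases according to whether $N(\nu)$ and $D(S)$ are bounded, producing the majorant explicitly as an augmented valuation $[\nu_{N-1},Q_N,\bar\beta]$ or, in the limit-key-polynomial case, by assigning the value $\infty$ to a minimal-degree polynomial whose truncated values never stabilize. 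What that buys is an explicit key-polynomial description of a majorant, in line with the paper's goal of reproving Granja's theorem entirely via key polynomials; what your approach buys is brevity and independence from that apparatus. The two majorants need not coincide (in the last case the paper's valuation sends the limit polynomial to $\infty$ while your supremum may assign it a finite value), but a majorant need not be least, so either serves.
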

\begin{remark} A short proof of a more general version of this result --- one for rings of arbitrary dimension --- is given in Lemma 3.9 (i) of \cite{N} using elementary properties.
\end{remark}
\begin{proof}
Since we are searching for a majorant, we may assume that $\tilde{S}$ contains $\tilde{\nu}_{\m}$. Since $\tilde{S}$ is totally ordered, we can fix local coordinates $x$ and $y$ such that $\tilde{\nu}(y)\geq\tilde{\nu}(x)=1$ for all $\tilde{\nu}\in \tilde{S}$.\\

By Theorem \ref{bijection}, there exists a totally ordered convex subset $S$ of the set of valuations over $k(x,y)$, satisfying $1=\nu(x)\leq \nu(y)$ for all $\nu\in S$, and such that $S$ contains $\nu_\m$. Also  by Theorem \ref{bijection} the set $\tilde{S}$ has a majorant in $\V$ if and only if the set $S$ has a majorant in the set of valuations  over $k(x,y)$, satisfying $1=\nu(x)\leq \nu(y)$.\\

By Corollary \ref{maximal}, if $S$ contains an element $\nu$ with $N(\nu)=\infty$ or it contains an element $\nu$ with $\beta_{N(\nu)}(\nu)=\infty$ then $S$ has a maximal element. Suppose that $S$ does not contain a maximal element.\\

By Remark \ref{structure_mu} (1), $N(\nu)$ and $D(\nu)$ define increasing functions on $S$. \\

We claim that there exists an initial segment $I\subset \N$ and a set of monic polynomials $\{Q_i\}_{i\in I(S)}$ such that for every valuation $\nu\in S$ the set $\{Q_i\}_{i\in I(\nu)}$ is complete for $\nu$ (the fact that $I\subset \N$ follows from the fact that $S$ does not contain a maximal element).\\
Indeed, take $\nu\in S$, $N\in \N$ and let $\{Q_{i}\}_{i\leq N}$ be a complete set of key polynomials for $\nu$. Let $\nu'\in S$. If
$\nu'<\nu$, then by Remark \ref{structure_mu} (3) the set $\{Q_{i}\}_{i\leq N(\nu')}$ is a complete set of key polynomials for $\nu'$. Otherwise, if $\nu'>\nu$, then, again by Remark \ref{structure_mu} (3), we can add to $\{Q_{i}\}_{i\leq N}$ the key polynomials
$\{Q_{i}\}_{N<i\leq N(\nu)}$ to obtain a complete set of key polynomials for $\nu'$.\\

 Suppose first that $N(\nu)$ is bounded from above. In this case there exists $N\in \N$ with $I(S)=\{1,\dots,N\}$, and a valuation
 $\nu\in S$ with $N(\nu)=N$.\\

The set $\left\{\beta_N(\nu)\mid \nu\in S,\ N(\nu)=N\right\}$ is bounded in $\bar{\R}$. Let $\bar{\beta}$ be a majorant for this set in $\bar{\R}$. If $N=1$, let $\mu$ be the $y$-adic valuation with $\mu(y)=\bar{\beta}$. Otherwise, if $N>1$, let
$\mu=[\nu_{i-1}, Q_N, \bar{\beta}]$. Then $\mu$ is a majorant for $S$.\\

Now suppose that $N(\nu)$ is unbounded in $\N$, that is, $I(S)=\N$.\\

We have $D(\nu)<\infty$ for all $\nu\in S$ since $S$ does not contain a maximal element. Consider the set
$D(S)=\left\{D(\nu)\mid\nu\in S\right\}$. Again, we have two cases, either $D(S)$ has a maximal element $D$, or it is unbounded in $\N$.

Suppose first that $D(S)$ is unbounded in $\N$. For each $f\in K[y]$, put
\[
\mu(f):=max \{\nu(f)\mid\nu\in S\}.
\]
Note first that this maximum is well defined. Indeed, let $f\in K[y]$. Let $\nu\in S$ with
\[
D(\nu)>\deg_y(f).
\]
For every $\nu'\in S$ with $\nu\leq \nu'$ we have $\nu(f)=\nu'(f)$.\\
It is not difficult to verify that $\mu$ is a valuation on $k(x,y)$ and that $\mu$ is a majorant for $S$.\\

Now suppose that $D(S)$ has a maximal element $D$. There exists a cofinal sequence $\{\nu_{i}\}_{i\in I(S)}$ of valuations in $S$ with
$\nu_i=[\nu_{i-1},\ Q_i,\ \nu_{i}(Q_i)]$ for each $i>1$. Therefore, if we write $\beta_{i}=\nu_i(Q_i)$, the value group $\Gamma_i$ of $\nu_i$ is $\beta_0\Z+\dots+\beta_i\Z$, with $\beta_i\in \Q$, by Proposition \ref{truncations_value_groups}.\\

We claim that for every $f\in K[y]$, if there exists $i\in I(S)$ with $\nu_i(f)=\nu_{i+1}(f)$ then $\nu_{j}(f)=\nu_{i}(f)$ for all $j\in I(S)$, $j\geq i$.\\
Indeed, let $i\in I(S)$ be such that $\nu_i(f)=\nu_{i+1}(f)$. By construction, we have
\[
\nu_i(Q_{i+1})<\nu_{i+1}(Q_{i+1})=\nu_{j}(Q_{i+1})\quad\text{ for all }j>i.
\]
Now let $f=qQ_{i+1}+r$ be the Euclidean division of $f$ by $Q_{i+1}$. Since $\nu_{i+1}(f)=\nu_{i}(f)$, we have
$\nu_{i+1}(qQ_{i+1})>\nu_{i+1}(f)=\nu_{i}(r)$. Now for all $j> i$ we have
$$
\nu_{j}(qQ_{i+1})\geq \nu_{i+1}(Q_{i+1})>\nu_{i}(r)=\nu_{j}(r).
$$
Therefore $\nu_{j}(f)$ must be equal to $\nu_{j}(r)=\nu_{i}(r)=\nu_{i}(f)$.\\

If for all $f\in K[y]$ there exists $i\in I(S)$ with $\nu_i(f)=\nu_{i+1}(f)$, we put
\begin{equation}
\mu(f):=\max_{i\in I(S)} \{\nu_i(f)\}.
\end{equation}
Otherwise, if there exists $f\in K[y]$ with
\begin{equation}
\nu_i(f)<\nu_{i+1}(f)\quad\text{ for all }i\in I(S),\label{eq:maximum}
\end{equation}
take $f$ monic of minimal degree, satisfying (\ref{eq:maximum}). We have $\deg_{y}(f)>D$ by definition of the polynomials $Q_i$ and the valuations $\nu_i$. Put $\mu(f)=\infty$. For a polynomials $g\in K[y]$, let $g=qf+r$ be the Euclidean division of $g$ by $f$, and put
$\mu(g)=\max_{i\in I(S)} \{\nu_i(r)\}$. Then $\mu$ is a valuation of $k(x,y)$ which is a majorant for $S$.\\
\end{proof}

\section{Nonmetric Tree Structure on $\V$}\label{trees}

We will now define rooted non-metric trees.

\begin{definition}
A rooted non-metric tree is a poset $(\T , \leq)$ such that:
\begin{itemize}
\item[(T1)] Every set of the form $I_\tau = \{\sigma \in \T |\ \sigma \leq \tau \}$ is isomorphic (as an ordered set) to a real interval.
\item [(T2)] Every totally ordered convex subset of $\T$ is isomorphic to a real interval.
\item [(T3)] Every non-empty subset $S$ of $\T$ has an infimum in $\T$.
\end{itemize}
\end{definition}
Let us consider the following special case of the condition (T3):
\medskip

\noindent(T3$'$) There exists a (unique) smallest element $\tau_0 \in \T$.
\begin{lemma}\label{mult_inf}(Lemma 3.4 \cite{N}) Under the conditions (T1) and (T3$'$), the following conditions are equivalent:
\begin{itemize}
\item[(T3)] Every non-empty subset $S \subset \T$ has an infimum.
\item[(T3$''$)] Given two elements $\tau$, $\sigma \in\T$, the set $\{\tau, \sigma\}$ has an infimum $\tau \wedge \sigma$.
\end{itemize}
\end{lemma}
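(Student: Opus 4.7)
The implication (T3) $\Longrightarrow$ (T3$''$) is immediate: one simply applies (T3) to the two-element set $\{\tau,\sigma\}$. The content of the lemma is the converse (T3$''$) $\Longrightarrow$ (T3).

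The plan is as follows. Assume (T1), (T3$'$) and (T3$''$), and let $S\subset\T$ be non-empty. Pick any element $\tau\in S$. Since any lower bound of $S$ must in particular be $\le\tau$, the infimum of $S$ (if it exists) must lie in $I_\tau$, so it is natural to look for it inside $I_\tau$. For each $\sigma\in S$, (T3$''$) gives an element $\tau\wedge\sigma\in\T$, which automatically lies in $I_\tau$. I would then form
$$
A\ :=\ \{\tau\wedge\sigma\ |\ \sigma\in S\}\ \subset\ I_\tau
$$
and define $\rho$ to be the infimum of $A$ computed inside $I_\tau$.

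For this last step to make sense, the key observation is that $I_\tau$ is order-isomorphic to a \emph{closed bounded} real interval: by (T1) it is isomorphic to some real interval, and by (T3$'$) together with the definition of $I_\tau$ it has $\tau_0$ as its least element and $\tau$ as its greatest element, so the interval must be of the form $[a,b]$. Closed bounded real intervals are complete lattices, hence $\rho=\inf A$ exists in $I_\tau$. I would then verify that $\rho$ is actually the infimum of $S$ in $\T$:
\begin{enumerate}
\item $\rho$ is a lower bound for $S$: for each $\sigma\in S$, $\rho\le\tau\wedge\sigma\le\sigma$.
\item $\rho$ is the greatest such: if $\rho'\in\T$ satisfies $\rho'\le\sigma$ for all $\sigma\in S$, then in particular $\rho'\le\tau$, so $\rho'\in I_\tau$; moreover $\rho'\le\tau$ and $\rho'\le\sigma$ force $\rho'\le\tau\wedge\sigma$ for every $\sigma\in S$, so $\rho'$ is a lower bound of $A$ in $I_\tau$ and hence $\rho'\le\rho$.
\end{enumerate}

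The only genuinely delicate point is the reduction of the problem to taking an infimum inside the complete poset $I_\tau$; once one has identified $I_\tau$ as a closed real interval this is automatic. The step in the argument that I would treat with the most care is the second bullet above, where one needs both halves of the definition of $\tau\wedge\sigma$ (as the greatest common lower bound), together with (T3$'$) to guarantee that the chosen lower bound $\rho'$ really sits inside $I_\tau$ where $A$ lives.
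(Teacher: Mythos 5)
Your proof is correct. Note that the paper itself does not prove this lemma --- it is quoted as Lemma 3.4 of \cite{N} --- and your argument (fix $\tau\in S$, form $A=\{\tau\wedge\sigma\mid\sigma\in S\}\subset I_\tau$, and use that by (T1) and (T3$'$) the segment $I_\tau$ has both a least element $\tau_0$ and a greatest element $\tau$, hence is order-isomorphic to a closed bounded interval and so is a complete lattice) is exactly the standard argument underlying the cited result; the verification that the infimum of $A$ taken in $I_\tau$ is a genuine infimum of $S$ in $\T$ is carried out correctly, including the degenerate case $\tau=\tau_0$ where $I_\tau$ is a single point.
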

\begin{definition}
A rooted nonmetric tree $\T$ is complete if every increasing sequence $\{\tau_{i}\}_{i\geq 1}$ in $\T$ has a majorant, that is, an element $\tau_{\infty}$, with $\tau_i\leq\tau_{\infty}$ for every $i$.
\end{definition}

\begin{theorem}\label{tree_structure}
The valuation space $\V$ is a complete nonmetric tree rooted at $\tilde{\nu}_{\m}$.
\end{theorem}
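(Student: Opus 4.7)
The strategy is to verify axioms (T1)--(T3) and completeness by leveraging Remark \ref{structure_mu} together with Theorems \ref{infimum} and \ref{majorant}.

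I would begin with the items requiring little work. The smallest element of $\V$ is $\tilde\nu_\m$, which establishes (T3$'$), and axiom (T3$''$) that any two elements admit an infimum is precisely Theorem \ref{infimum}. Consequently, once (T1) has been established, Lemma \ref{mult_inf} delivers (T3). With (T1) in hand, (T2) is also immediate: given a totally ordered convex $S\subset\V$, Theorem \ref{majorant} produces a majorant $\mu^*\in\V$, so $S\subset I_{\mu^*}$; by (T1), $I_{\mu^*}$ is order-isomorphic to a real interval, and a convex subset of a real interval is again an interval. For completeness, an increasing sequence $\{\tau_i\}$ need not itself be convex, but the set $\bar S:=\bigcup_i I_{\tau_i}$ is: each $I_{\tau_i}$ is totally ordered by (T1), the $I_{\tau_i}$ nest increasingly because $\tau_i\le\tau_{i+1}$, so $\bar S$ is totally ordered, and convexity in $\V$ is straightforward. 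Theorem \ref{majorant} then yields a majorant of $\bar S$, which a fortiori bounds $\{\tau_i\}$.

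The main content is therefore (T1). Fix $\tau\in\V$ and pass through Theorem \ref{bijection} to the corresponding valuation $\nu$ of $k(x,y)$, carrying its complete set $\{Q_i\}_{i\in I(\nu)}$ of key polynomials and its truncations $\nu_i$. According to Remark \ref{structure_mu}, every $\mu<\tau$ falls into exactly one of two families: either $\mu$ is the $y$-adic valuation with $\mu(x)=1$ and $\mu(y)=s$ for some $s\in[1,\beta_1(\nu))$, in which case $\mu<\nu_1$; or there is a unique $i\in I(\nu)$ with $\mu_j=\nu_j$ for $j\le i$ and $\mu=[\nu_i,Q_{i+1},\beta]$ for some $\beta\in(\nu_i(Q_{i+1}),\beta_{i+1}(\nu)]$. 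Since $\beta\mapsto[\nu_i,Q_{i+1},\beta]$ is strictly order-preserving (immediate from the formula defining an augmented valuation), each segment $[\nu_i,\nu_{i+1}]$ is parameterized by the real interval $[\nu_i(Q_{i+1}),\beta_{i+1}(\nu)]$ with $\nu_i$ and $\nu_{i+1}$ corresponding to the endpoints, and the initial segment $[\tilde\nu_\m,\nu_1]$ is parameterized by $s\in[1,\beta_1(\nu)]$. Concatenating these segments at their shared endpoints and, if necessary, rescaling the $i$-th segment to occupy $[i,i+1]$, I obtain an order-preserving bijection of $I_\tau$ onto a real interval. The subcases $N(\tau)<\infty$, $N(\tau)=\infty$, and $N(\tau)=\omega$ (where an additional segment for $Q_\omega$ must be appended at the top) are all treated by the same template.

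The hardest step will be the careful verification of this parameterization across segment boundaries: that every $\mu\le\tau$ arises from exactly one parameter (the uniqueness assertions in Remark \ref{structure_mu} and Proposition \ref{unique_val} are the tools here), that the order on parameters transports correctly to the order on valuations at boundaries, and that the limiting phenomena --- $\tau$ being itself a truncation $\nu_{N(\tau)}$, some $\beta_{N(\tau)}(\nu)=\infty$, or the presence of $Q_\omega$ --- do not introduce gaps or duplicates in the resulting interval. Once (T1) is in place, the remaining axioms drop out via the short arguments sketched above combined with Theorems \ref{infimum}, \ref{majorant} and Lemma \ref{mult_inf}.
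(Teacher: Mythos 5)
Your proposal is correct and follows essentially the same route as the paper: (T3$'$) is immediate, (T3) comes from Theorem \ref{infimum} via Lemma \ref{mult_inf}, (T2) and completeness come from Theorem \ref{majorant}, and (T1) is proved by parameterizing the segment below a given valuation through its key polynomials and Remark \ref{structure_mu}, your segment-by-segment concatenation being just a reparameterization of the paper's single parameter $t$ normalized by $\beta_i(\nu)/d_i(\nu)$. Your extra care in reducing an increasing sequence to the convex set $\bigcup_i I_{\tau_i}$ before invoking Theorem \ref{majorant} is a minor refinement of the paper's terser completeness claim, not a different method.
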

\begin{proof}
\begin{itemize}
\item [(T3$'$)] It is clear that ($\V,\leq$) is a partially ordered set with unique minimal element $\tilde{\nu}_{\m}$.
\item [(T1)] Fix $\tilde{\nu}$ in $\V$, with $\tilde{\nu}>\nu_{\m}$. We will show that the set
$S=\left\{\tilde{\mu}\in\V\mid\nu_{\m}\leq\tilde{\mu}\leq\tilde{\nu}\right\}$ is a totally ordered set isomorphic to an interval in
$\bar\R_{+}$.

Choose local coordinates $x$ and $y$ such that $1=\tilde{\nu}(x)\leq \tilde{\nu}(y)$.

Let $\nu$ be the valuation of $k(x,y)$ corresponding to $\tilde{\nu}$ and let $\{Q_{i}\}_{i\in I(\nu)}$ be a complete sequence of key polynomials for $\nu$. The sequence $\frac{\beta_{i}(\nu)}{d_{i}(\nu)}$ is strictly increasing. If $I(\nu)$ has a maximal element $\ell$, put $I=\left[1,\frac{\beta_\ell(\nu)}{d_{\ell}(\nu)}\right]\subset \bar{\R}$. Otherwise,  put $I=[1,\infty)\subset \bar{\R}$. We will prove that $S$ is isomorphic to $I$ as an ordered set.\\

To each $t\in I$ we will associate a valuation $\tilde{\nu}_{t}$ in $S$. \\

Let $t\in I$. If $t=1$, put $\tilde{\nu}_{t}=\tilde{\nu}_{\m}$. If $I(\nu)$ has a maximal element $\ell$ and $t=\frac{\beta_\ell(\nu)}{d_{\ell}(\nu)}$, put $\tilde{\nu}_{t}=\tilde{\nu}$. \\

Now suppose that $1<t<\frac{\beta_\ell(\nu)}{d_{\ell}(\nu)}$. There exists a unique element $u\in I(\nu)$ such that $\frac{\beta_{u-1}(\nu)}{d_{u-1}(\nu)}<t\leq \frac{\beta_{u}(\nu)}{d_{u}(\nu)}$. Let $\nu_{t}:=[\mu_{u-1},\ Q_{u},\ td_{u}]$ and $\tilde{\nu}_{t}$ the corresponding valuation in $\V$. That the resulting map is a bijection follows from Remark \ref{structure_mu}.

\item [(T2)] By Theorem \ref{majorant}, every totally ordered convex subset $\tilde{S}$ of $\T$ has a majorant in $\T$. With (T3$'$) and (T1) this proves (T2). This also proves that $\T$ is complete.\\

\item [(T3)] is an immediate consequence of Theorem \ref{infimum} and Lemma \ref{mult_inf}.
\end{itemize}
\end{proof}
\begin{remark} Let $(R,\mathfrak m,k)$ and $(R',\mathfrak m',k')$ be two regular two-dimensional local rings such that the residue fields $k$ and $k'$ have the same cardinality. Let $\iota:k\cong k'$ be a bijection between the two fields (as sets, that is, $\iota$ need not be a homomorphism of fields). Using the results of this paper it can be shown that $\iota$ induces a homeomorphism of the respective valuative trees, associated to $R$ and $R'$. Thus, up to homeomorphism, a valuative tree associated to a regular local ring $(R,\mathfrak,k)$ depends only on the cardinality of the residue field $k$. 
\end{remark}

\end{document}